\newcounter{alphathm}
\theoremstyle{plain}
\newtheorem{theorem}{Theorem}
\newtheorem{alphathm}[alphathm]{Theorem}
\newtheorem{assumption}[theorem]{Standing assumption}
\newtheorem{lemma}[theorem]{Lemma}
\newtheorem{proposition}[theorem]{Proposition}
\newtheorem{corollary}[theorem]{Corollary}
\newtheorem*{claim*}{Claim}
\theoremstyle{definition}
\newtheorem{definition}[theorem]{Definition}
\newtheorem{remark}[theorem]{Remark}
\newtheorem{remark-convention}[theorem]{Remark-Convention}
\numberwithin{equation}{section}
\numberwithin{theorem}{section}
\newcommand{\fakeenv}{} 
\newenvironment{restate}[2]  
{ 
 \renewcommand{\fakeenv}{#2} 
 \theoremstyle{plain} 
 \newtheorem*{\fakeenv}{#1~\ref{#2}} 
 \begin{\fakeenv}
}
{
 \end{\fakeenv}
}
\newcommand{\RR}{\mathbb{R}}
\newcommand{\PP}{\mathbb{P}}
\newcommand{\ZZ}{\mathbb{Z}} 
\newcommand{\calA}{\mathcal{A}}
\newcommand{\calC}{\mathcal{C}}
\newcommand{\calF}{\mathcal{F}}
\newcommand{\calH}{\mathcal{H}}
\newcommand{\calP}{\mathcal{P}}
\newcommand{\frakg}{\mathfrak{g}}
\newcommand{\hDelta}{\widehat \Delta}
\newcommand{\hU}{\widehat U}
\newcommand{\hV}{\widehat V}
\newcommand{\abs}[1]{\left\lvert {#1} \right\rvert} 
\newcommand{\wght}[1]{\lvert {#1} \rvert} 
\newcommand{\I}[1]{\langle #1 \rangle}
\newcommand{\from}{\colon\thinspace}
\newcommand{\inv}{^{-1}}
\newcommand{\FN}{F_{N}}
\newcommand{\zF}{\mathcal{ZF}}
\newcommand{\A}{A}
\newcommand{\B}{B}
\newcommand{\param}%
	{{\mathchoice{\mkern1mu\mbox{\raise2.2pt\hbox{$\centerdot$}}\mkern1mu}%
	{\mkern1mu\mbox{\raise2.2pt\hbox{$\centerdot$}}\mkern1mu}%
	{\mkern1.5mu\centerdot\mkern1.5mu}{\mkern1.5mu\centerdot\mkern1.5mu}}}
\DeclareMathOperator{\Aut}{Aut}
\DeclareMathOperator{\IA}{IA}
\DeclareMathOperator{\Inn}{Inn}
\DeclareMathOperator{\Mod}{Mod}
\DeclareMathOperator{\Out}{Out}
\DeclareMathOperator{\GL}{GL}
\DeclareMathOperator{\Cone}{Cone}
\DeclareMathOperator{\Curr}{Curr}
\DeclareMathOperator{\EG}{EG}
\DeclareMathOperator{\NEG}{NEG}
\DeclareMathOperator{\CT}{CT}
\DeclareMathOperator{\supp}{supp}
\newcommand{\PCurr}{\PP {\rm Curr}}
\begin{document}


\title{Atoroidal dynamics of subgroups of $\Out(\FN)$}

\author[M.~Clay]{Matt Clay}
\address{Department of Mathematical Sciences \\
  University of Arkansas\\
  Fayetteville, AR 72701}
\email{\href{mailto:mattclay@uark.edu}{mattclay@uark.edu}}

\author[C.~Uyanik]{Caglar Uyanik}
\address{Department of Mathematics \\
Yale University\\
10 Hillhouse Avenue\\
New Haven, CT 06520}
\email{\href{mailto:caglar.uyanik@yale.edu}{caglar.uyanik@yale.edu}}

\thanks{\tiny M.C. is partially supported by the Simons Foundation.}

\begin{abstract} 
We show that for any subgroup $\calH$ of $\Out(\FN)$, either $\calH$ contains an atoroidal element or a finite index subgroup $\calH'$ of $\calH$ fixes a nontrivial conjugacy class in $\FN$.  This result is an analog of Ivanov's subgroup theorem for mapping class groups and Handel--Mosher's subgroup theorem for $\Out(\FN)$ in the setting of irreducible elements. 
\end{abstract}

\maketitle

\section*{Introduction}

Let $S$ be an orientable surface of finite type with $\chi(S) < 0$ and $f \from S \to S$ be an orientation preserving homeomorphism.  Nielsen--Thurston classification states that after replacing $f$ with an isotopic homeomorphism, there is an invariant collection of disjoint essential simple closed curves $C$ (possibly empty) so that the complement of an open collar neighborhood of $C$ decomposes into invariant subsurfaces (possibly disconnected), where the restriction of $f$ to each subsurface is either finite order or pseudo-Anosov~\cite{CB,Th}.  In particular, if the action of $f$ on the set of isotopy classes of essential simple closed curves does not have a finite orbit, then $f$ is isotopic to a pseudo-Anosov homeomorphism.  For our purposes, we will not need the definition of a pseudo-Anosov homeomorphism but we note that such homeomorphisms have a very rigid structure and possess desirable dynamical properties.  One such example is a theorem of Thurston that states that the $3$--manifold $M_{f}$, called the \emph{mapping torus of $f$}, obtained from $S \times [0,1]$ by gluing $S \times \{1\}$ to $S \times \{0\}$ via $f$, admits a hyperbolic structure if and only if $f$ is isotopic to a pseudo-Anosov homeomorphism~\cite{Thgeom}.  The importance of Thurston's result is magnified by the recent breakthrough results of Agol proving that every closed hyperbolic $3$--manifold has a finite cover that fibers over the circle, i.e., can be obtained by the above construction~\cite{Agol}. 

Ivanov strengthened the Nielsen--Thurston classification of homeomorphisms to subgroups of the mapping class group $\Mod(S)$, the group of isotopy classes of orientation preserving homeomorphisms of $S$.  Specifically, he proved that if the action of a subgroup $\calH < \Mod(S)$ on the set of isotopy classes of essential simple closed curves does not have a finite orbit, then $\calH$ contains a pseudo-Anosov element, i.e., the isotopy class of a pseudo-Anosov homeomorphism~\cite{Iva}.  A priori, each element in $\calH$ could have a finite orbit and yet the subgroup might not have a finite orbit.  What Ivanov proves is that if two elements in $\calH$ have sufficiently transverse finite orbits (in a precise sense), then some product of their powers is pseudo-Anosov.  Ivanov accomplishes this using classical ping-pong and other dynamical arguments on the space of projectivized measured laminations on $S$.

The outer automorphism group of a non-abelian free group $\FN$ of finite rank is the quotient $\Out(\FN) = \Aut(\FN)/\Inn(\FN)$.  This group is closely related to $\Mod(S)$, in particular by the Dehn--Nielsen--Baer theorem, see~\cite{FarbMargalit}.  During the last 30 years, the development of the theory of $\Out(\FN)$ has closely followed that of $\Mod(S)$, and to some extend that of $\GL(n,\ZZ)$ as well.  Examples of this beneficial analogy include the introduction of the Culler--Vogtmann outer space~\cite{CV}, the construction of train-track representatives~\cite{BH92} and more recently an investigation into the geometry of the free factor and free splitting complexes~\cite{BF14,ar:HM13}.

The notion of a pseudo-Anosov element in $\Mod(S)$ has two analogs in $\Out(\FN)$.  One of these uses the characterization of pseudo-Anosovs as the (infinite order) elements in $\Mod(S)$ that do not restrict to a proper subsurface.  An outer automorphism $\varphi \in \Out(\FN)$ is called \emph{fully irreducible} if no positive power of $\varphi$ fixes the conjugacy class of a proper free factor, i.e., the action of $\varphi$ on the set of conjugacy classes of proper free factors does not have a finite orbit (see Section~\ref{sec:outf} for complete definitions).  Like pseudo-Anosov elements, these outer automorphisms have a very rigid structure and possess desirable dynamical properties.  

The other analog uses the characterization of pseudo-Anosovs as the elements whose mapping torus admits a hyperbolic metric.  An outer automorphism $\varphi \in \Out(\FN)$ is called \emph{atoroidal} if no positive power of $\varphi$ fixes the conjugacy class of a nontrivial element in $\FN$, i.e., the action of $\varphi$ on the set of conjugacy classes of nontrivial elements on $\FN$ does not have a finite orbit.  Paralleling the result of Thurston about fibered 3--manifolds, combined results of Bestvina--Feighn and Brinkmann show that the semi-direct product using the automorphism $\Phi \in \Aut(\FN)$:
\begin{equation*}
\FN \rtimes_{\Phi} \ZZ = \I{ x_1, \ldots, x_N, t \mid t\inv x_{i} t = \Phi(x_i) }
\end{equation*}
is $\delta$--hyperbolic if and only if the outer automorphism class $[\Phi] \in \Out(\FN)$ is atoroidal~\cite{BF,brink}.  

Our main result is the analog of Ivanov's theorem in the setting of $\Out(\FN)$ corresponding to atoroidal elements.

\begin{alphathm}\label{th:alternative} 
Let $\calH$ be a subgroup of $\Out(\FN)$ where $N \geq 3$.  Either $\calH$ contains an atoroidal element or there exists a finite index subgroup $\calH'$ of $\calH$, and a nontrivial element $g\in\FN$ such that $\calH'[g]=[g]$.
\end{alphathm}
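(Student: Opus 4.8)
The plan is to follow Ivanov's strategy via a ping-pong argument, but using the dynamics of $\Out(\FN)$ acting on the space of projectivized geodesic currents $\PCurr(\FN)$ rather than on measured laminations. First I would set up a dichotomy at the level of the whole subgroup: either $\calH$ fixes (up to finite index) a conjugacy class of nontrivial elements, or it does not. In the latter case, the goal is to produce within $\calH$ two elements $\varphi_1, \varphi_2$ with sufficiently transverse attracting/repelling behavior on $\PCurr(\FN)$ so that a suitable product of high powers $\varphi_1^{n}\varphi_2^{m}$ has North-South type dynamics and is atoroidal. The key point is that atoroidality can be detected dynamically: a nontrivial conjugacy class $[g]$ fixed (up to powers) by $\psi$ corresponds to a rational current $\eta_{[g]}$ that is fixed by $\psi$ acting on $\PCurr(\FN)$; so if $\psi$ has North-South dynamics on $\PCurr(\FN)$ with attracting and repelling currents that are not rational (equivalently, not supported on a single conjugacy class), then $\psi$ is atoroidal.

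The main steps, in order, would be: (1) Reduce to the case where $\calH$ has no periodic conjugacy class, i.e., no finite-index subgroup fixes any $[g]$ — otherwise we are done. (2) If $\calH$ contains an element $\varphi$ that is already atoroidal, we are done; so assume every element of $\calH$ has a periodic conjugacy class. Here I would invoke relative train track / completely split train track technology (\CT\ maps) to attach to each non-atoroidal $\varphi$ a canonical invariant structure — a maximal $\varphi$-invariant free factor system carrying all the periodic conjugacy classes, together with the attracting laminations/currents of $\varphi$. (3) Run an induction on the complexity (rank / number of strata) of the ``non-atoroidal part.'' If the invariant free factor systems of two elements of $\calH$ are ``aligned,'' pass to the subgroup preserving that system and to the corresponding subgroup of $\Out$ of the smaller free factors, applying the inductive hypothesis. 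If they are genuinely transverse, use ping-pong on $\PCurr(\FN)$: the attracting current of $\varphi_1$ and the repelling current of $\varphi_2$ (and vice versa) lie in disjoint neighborhoods, so for large $n,m$ the element $\varphi_1^n \varphi_2^m$ has attracting and repelling fixed currents that are limits of these transverse currents, hence cannot be a rational current supported on a single conjugacy class — giving atoroidality. (4) Conclude by showing that these two cases (aligned vs.\ transverse) exhaust the possibilities, using that a subgroup all of whose elements fix some periodic conjugacy class, yet with no common periodic class, must contain a transverse pair; this is where one needs that $N \geq 3$ so the relevant complexity is positive.

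The hard part will be step (3)–(4): controlling the interaction between the ping-pong dynamics on currents and the combinatorial bookkeeping of invariant free factor systems. Specifically, one must ensure that when the invariant systems of $\varphi_1$ and $\varphi_2$ are transverse, the attracting current $\mu_1^+$ of $\varphi_1$ really is ``generic'' with respect to $\varphi_2$'s repelling dynamics — i.e., that $\mu_1^+$ is not fixed by $\varphi_2$ and does not lie in the repelling set of $\varphi_2$. This requires a non-filling/filling analysis: either $\mu_1^+$ fills $\FN$ (so it pairs nontrivially against everything and the ping-pong goes through directly), or its support is carried by a proper free factor system, and then transversality of the two systems forces the needed genericity. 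I would expect to need a lemma — proved via \CT\ maps and the structure of attracting laminations — that a non-atoroidal element whose periodic conjugacy classes are carried by a free factor system $\calF$ has an attracting current whose support, together with $\calF$, already ``sees'' the relevant complexity, so that transversality with a second element's structure cannot be an accident. Assembling this lemma with the ping-pong and the induction is the crux; the rest is bookkeeping about finite-index subgroups and passing between $\Out(\FN)$ and $\Out$ of free factors.
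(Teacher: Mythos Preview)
Your proposal has a genuine structural gap: you assume that an arbitrary non-atoroidal element $\varphi$ of $\Out(\FN)$ has usable attracting and repelling currents on $\PCurr(\FN)$ with which one can run ping-pong. This is precisely what is \emph{not} available. The paper explicitly notes that little is known about the action of general non-atoroidal elements on $\Curr(\FN)$, and in fact the existence of commuting non-atoroidal elements whose product is atoroidal rules out any global hyperbolic picture in which atoroidal elements are exactly the loxodromics. So the sentence ``the attracting current of $\varphi_1$ and the repelling current of $\varphi_2$ lie in disjoint neighborhoods, so for large $n,m$ the element $\varphi_1^n\varphi_2^m$ has \ldots'' presupposes dynamics that simply do not exist for generic $\varphi_1,\varphi_2$. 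Your proposed lemma in step (4) about attracting currents and their supports is correspondingly not formulable at that level of generality.

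The paper's architecture is quite different and is designed exactly to manufacture situations in which the needed dynamics \emph{are} available. One fixes a maximal $\calH$-invariant filtration $\emptyset=\calF_0\sqsubset\cdots\sqsubset\calF_k=\{[\FN]\}$ and climbs it stratum by stratum, maintaining inductively an element $\varphi_i\in\calH$ that is atoroidal on $\calF_i$ and simultaneously irreducible and non-geometric on every multi-edge extension. Multi-edge extensions are handled via the Handel--Mosher/Guirardel--Horbez machinery on the relative hyperbolic complex $\zF$, not on $\PCurr$. The ping-pong on currents occurs only at single-edge (handle) extensions, and only after one already has an element whose restriction to the lower free factor is atoroidal; in that very specific situation the paper proves a new generalized north-south dynamics result (Theorem~\ref{th:gns}). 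Even then the invariant sets are not points but cones $\hDelta_\pm$ over a simplex, joined at the fixed rational current $[\eta_g]$, so the ping-pong (Proposition~\ref{prop:atoroidal}) is more delicate than the clean two-point picture your outline suggests. Your ``aligned versus transverse'' dichotomy for $\varphi$-invariant free factor systems does not substitute for this filtration: it gives no mechanism to promote atoroidality from a smaller free factor back up to $\FN$, and it does not isolate the co-rank~$1$ situation that makes the current-space dynamics tractable.
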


When $N = 2$ the theorem holds as well.  This follows as $\Out(F_{2})$ is naturally isomorphic to the extended mapping class group of a torus with a single boundary component and hence every subgroup has an index two subgroup that fixes the conjugacy class corresponding to the boundary component.

Essential to our proof of this theorem is the analog of Ivanov's theorem in the setting of $\Out(\FN)$ corresponding to fully irreducible elements as recently shown by Handel--Mosher~\cite{HM}.  Specifically, they prove that for a finitely generated subgroup $\calH < \Out(\FN)$, either $\calH$ contains a fully irreducible element or there exists a finite index subgroup $\calH'$ of $\calH$, and a proper free factor $A < \FN$ such that $\calH'[A]=[A]$.  The idea of their proof is similar in spirit to that of Ivanov.  If two elements in $\calH$ have sufficiently transverse finite orbits on the set of conjugacy classes of proper free factors, then some product of their powers is fully irreducible.  In this setting Handel--Mosher use the action on the space of laminations on $\FN$.  Later, Horbez generalized this result to all subgroups of $\Out(\FN)$ dropping the finitely generated assumption using the action of $\Out(\FN)$ on the free factor complex~\cite{Hshort}. 

Whereas Ivanov's theorem allows for repeated inward application to decompose a surface completely relative to the action of some subgroup $\calH < \Mod(S)$, the above stated version in $\Out(\FN)$ for fully irreducible elements does not.  The difference arises as if a subsurface is invariant, so is its complement, but if the conjugacy class of a proper free factor $A$ is invariant, there is no reason why there must be an invariant splitting $A \ast B$.  Handel--Mosher have extended their above mentioned result to give a complete decomposition of $\FN$ relative to the action of some finitely generated subgroup $\calH < \Out(\FN)$.  Specifically, they show that for any maximal $\calH$--invariant filtration $\emptyset = \calF_{0} \sqsubset \calF_{1} \sqsubset \cdots \sqsubset \calF_{k} = \{[\FN]\}$ of free factor systems, if the extension $\calF_{i-1} \sqsubset \calF_{i}$ is multi-edge, then there is an element $\varphi \in \calH$ which is fully irreducible with respect to this extension (see Section~\ref{sec:outf} for full details).  More recently, Horbez--Guirardel generalized this classification to all subgroups of $\Out(\FN)$ using the action of $\Out(\FN)$ on several hyperbolic complexes~\cite{un:GH}.

The proof of Theorem~\ref{th:alternative} builds on the above subgroup decomposition results.  The general strategy is to work from the bottom up: if $\calH$ contains an element whose restriction to $\calF_{i-1}$ is atoroidal either we find an element in $\calF_{i}$ whose orbit is finite, or we produce an element in $\calH$ whose restriction to $\calF_{i}$ is atoroidal.  Techniques and results from Handel--Mosher and Guirardel--Horbez take care of the case when $\calF_{i-1} \sqsubset \calF_{i}$ is a multi-edge extension.  The single-edge case requires a different approach.  Indeed,   
when $\calF_{i-1} \sqsubset \calF_{i}$ is a single-edge extension, the corresponding space of laminations is empty and so Handel--Mosher techniques do not apply.  On the other hand, trying to prove Theorem \ref{th:alternative} using solely by hyperbolic geometric methods is hopeless.  There are commuting non-atoroidal elements in $\Out(\FN)$ whose product is atoroidal (an example appears below) which implies there is no $\delta$--hyperbolic $\Out(\FN)$ complex whose loxodromic isometries are precisely atoroidal elements~\cite{TaylorMSRI}.

In order to deal with single-edge extensions, we use the space of geodesic currents $\Curr(\FN)$ (see Section~\ref{sec:currents} for full details).  This is the natural space for exhibiting that an element is atoroidal as it can be naturally viewed as the closure of the space of conjugacy classes in $\FN$.  Our main technical result, Theorem \ref{th:gns}, analyzes the dynamics of an element $\varphi \in \Out(\FN)$ that leaves invariant a co-rank $1$ free factor $A$ and whose restriction to $A$ is atoroidal.  If $\varphi$ is not atoroidal, we show that there are simplices $\Delta_{+}, \Delta_{-}$ in $\PCurr(\FN)$ and a counting current $[\eta_{g}]$ for which $\varphi$ has \emph{generalized north-south dynamics} with $\hDelta_{+} = \Cone(\Delta_{+},[\eta_{g}])$ and $\hDelta_{-} = \Cone(\Delta_{-},[\eta_{g}])$.  Specifically, points outside of a neighborhood of $\hDelta_{-}$ are moved by $\varphi$ into a neighborhood of $\Delta_{+}$ and vice versa for $\varphi\inv$ (see Figure~\ref{fig:gns set-up}).  This set-up is akin to the set-up for a nonatoroidal fully irreducible element (which necessarily is a pseudo-Anosov homeomorphism of a surface with one boundary component), where the fixed counting current corresponds to the boundary component of the associated surface~\cite[Theorem~B]{UyaNSD}.  This result is of independent interest as there is little known about the action of nonatoriodal elements on $\Curr(\FN)$ in general.  

A natural question is whether there is a stronger conclusion to Theorem~\ref{th:alternative}.  Precisely, is it the case that if $\calH < \Out(\FN)$ contains an atoroidal element, must it be that either $\calH$ is virtually cyclic or else contains a subgroup isomorphic to $F_{2}$ in which every nontrivial element is atoroidal?  The corresponding analog in the setting of $\Mod(S)$ is true and was shown by Ivanov~\cite{Iva}; the corresponding analog for fully irreducible elements in $\Out(\FN)$ is true and was shown by Kapovich--Lustig~\cite{KL5}.  In the present setting however, the stronger conclusion does not hold.  The key point, as it was for obstructing a $\delta$--hyperbolic complex whose loxodromic isometries are precisely the atoroidal elements, is that the centralizer of an atoroidal element is not virtually cyclic in general.  Indeed, if $\varphi \in \Out(F_{3})$ is atoroidal, then so is $\varphi * \varphi \in \Out(F_{3} * F_{3})$.  The subgroup $\calH = \I{\varphi * {\rm id}, {\rm id} * \varphi}$ is free abelian of rank $2$ and contains an atoroidal element.

In light of the above discussion, one might conjecture that if $\calH < \Out(\FN)$ contains an atoroidal element $\varphi$, then either $\calH$ \emph{virtually centralizes} $\varphi$: for all $h \in \calH$, there is an $n  > 0$ such that $h\varphi^{n} = \varphi^{n} h$ or $\calH$ contains a subgroup isomorphic to $F_{2}$ in which every nontrivial element is atoroidal.  However, even this weaker statement is not true.  For example, take atoroidal elements $\varphi, \psi \in \Out(F_{3})$ such that $\I{\varphi,\psi} \cong F_{2}$ and consider the subgroup $\calH = \I{\varphi*\varphi,\varphi*\psi} \subset \Out(F_{6})$.  Any non-trivial element of $\calH$ is of the form $\varphi^{n} * \omega$ where $n \in \ZZ$ and $\omega \in \I{\varphi,\psi}$ is non-trivial.  In particular $\calH$ does not virtually centralize any of its non-trivial elements.  However, given any two elements $\theta_{1}, \theta_{2}  \in \calH$, we have $\theta_{1} = \varphi^{n_{1}} * \omega_{1}$ and $\theta_{2} = \varphi^{n_{2}} * \omega_{2}$ and thus we find that $\theta_{1}^{n_{2}}\theta_{2}^{-n_{1}} = {\rm id} * \omega_{1}^{n_{2}}\omega_{2}^{-n_{1}}$ which is not atoroidal.  Therefore $\I{\theta_{1},\theta_{2}}$ is not purely atoroidal. 

The right characterization is the following statement. 

\begin{alphathm}\label{th:noTits} 
Let $\calH < \Out(\FN)$ be a subgroup which contains an atoroidal element $\varphi$.  Then, $\calH$ contains a purely atoroidal free subgroup if and only if the restriction of $\calH$ to each minimal $\calH$--invariant free factor is not virtually cyclic. 
\end{alphathm}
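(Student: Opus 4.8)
\emph{The forward direction.} We would prove this contrapositively: assume that some minimal $\calH$--invariant free factor $A$ has virtually cyclic restriction to $\calH$, i.e.\ the image of the finite-index subgroup $\mathrm{Stab}_\calH([A])$ under the restriction homomorphism to $\Out(A)$ is virtually cyclic, and let $F\le\calH$ be any free subgroup of rank at least two. Then $F_0:=F\cap\mathrm{Stab}_\calH([A])$ has finite index in $F$, hence is again non-abelian free, and since a non-abelian free group cannot embed in a virtually cyclic group, the restriction $F_0\to\Out(A)$ has nontrivial kernel. Any nontrivial $\phi$ in this kernel has a representative $\Phi\in\Aut(\FN)$ with $\Phi(A)=A$ and $\Phi|_A$ inner; after correcting by an inner automorphism of $\FN$ supported on $A$ we may take $\Phi|_A=\mathrm{id}$, so $\phi$ fixes the conjugacy class of every nontrivial element of $A$ and is not atoroidal. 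Thus $F$ is not purely atoroidal. (Since $\calH$ contains the atoroidal element $\varphi$ it always contains a purely atoroidal cyclic subgroup, so ``purely atoroidal free subgroup'' here must be read as having rank at least two; this is why the forward direction has content.)

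\emph{The converse --- overall strategy.} Now assume $\varphi\in\calH$ is atoroidal and $\calH$ restricts non-virtually-cyclically to each minimal $\calH$--invariant free factor. We induct on $N$ using a maximal $\calH$--invariant filtration $\emptyset=\calF_0\sqsubset\calF_1\sqsubset\cdots\sqsubset\calF_k=\{[\FN]\}$ of free factor systems, carrying the relative statement ``$\calH$ contains a copy of $F_2$ whose restriction to $\calF_i$ is purely atoroidal''. The hypothesis forces every component of $\calF_1$ to have rank at least two, and the minimal $\calH$--invariant free factors are the free factors assembled from $\calH$--orbits of components of $\calF_1$; for such a factor $A$ the group $\mathrm{Stab}_\calH([A])$ restricts to a non-virtually-cyclic subgroup of $\Out(A)$ that still contains an atoroidal element, since a power of $\varphi$ lies in $\mathrm{Stab}_\calH([A])$ and restricts to an element of $\Out(A)$ fixing no conjugacy class of $A$ to any power (else $\varphi$ would fix a conjugacy class of $\FN$). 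When $\calF_1\neq\{[\FN]\}$ these $A$ have smaller rank, so induction produces a purely atoroidal $F_2$ in $\Out(A)$, which lifts to $\calH$ because a surjection onto a free group splits; handling all orbits at once is exactly where the relative form of the inductive hypothesis is needed. When $\calF_1=\{[\FN]\}$, equivalently when $\calH$ has no finite-index subgroup carrying a proper free factor system, Horbez's subgroup theorem gives a fully irreducible element in $\calH$; as $\calH$ is not virtually cyclic, Kapovich--Lustig yields a purely fully irreducible $F_2\le\calH$, and since $\varphi$ is atoroidal no finite-index subgroup of $\calH$ can virtually fix the boundary curve of a geometric fully irreducible, so $\calH$ contains a non-geometric --- hence atoroidal --- fully irreducible and the $F_2$ may be taken purely atoroidal.

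\emph{The converse --- the inductive step up the filtration.} Given $F_2\le\calH$ purely atoroidal relative to $\calF_{i-1}$, consider $\calF_{i-1}\sqsubset\calF_i$. If it is a multi-edge extension, Handel--Mosher and Guirardel--Horbez supply elements of $\calH$ fully irreducible relative to this extension; combining two of them with the given $F_2$, passing to high powers, and running ping-pong on the relevant free factor and attracting lamination complexes produces a new $F_2\le\calH$ purely fully irreducible relative to $\calF_{i-1}\sqsubset\calF_i$ (still purely atoroidal on $\calF_{i-1}$), which --- using $\varphi$ atoroidal to exclude geometric behaviour --- is purely atoroidal relative to $\calF_i$. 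If the extension is single-edge, the lamination space is empty and we switch to geodesic currents: after passing to a finite-index subgroup and choosing representatives, $\calF_i$ adjoins a corank-one free factor to a component of $\calF_{i-1}$, so Theorem~\ref{th:gns} applies to every element of $\calH$ whose restriction below is atoroidal --- it is either already atoroidal relative to $\calF_i$, or has generalized north--south dynamics on the relevant $\PCurr$ with attracting and repelling sets $\Cone(\Delta_\pm,[\eta_g])$ coned off a single fixed counting current $[\eta_g]$. Non-virtual-cyclicity lets us pick two elements of $\calH$ --- built from the generators of the given $F_2$ and a suitable $h\in\calH$ --- whose fixed counting currents are distinct; ping-pong on $\PCurr$ then gives high powers generating an $F_2$ each of whose nontrivial elements has generalized north--south dynamics with invariant simplices containing no counting current (the two cone points ``cancel'', as neither is fixed by the other generator), hence fixes no rational current and so is atoroidal relative to $\calF_i$. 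Iterating to $\calF_k=\{[\FN]\}$ yields the purely atoroidal $F_2\le\calH$.

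\emph{Main obstacle.} The crux is the single-edge step. As the introduction's examples show, there is no $\delta$--hyperbolic $\Out(\FN)$--complex whose loxodromics are precisely the atoroidal elements, so the current dynamics of Theorem~\ref{th:gns} cannot be traded for a coarse-geometric ping-pong: all the work lies in analysing how two maps with \emph{generalized} north--south dynamics (invariant simplices coned off distinct counting currents) interact, and in showing not merely that suitable powers generate a free group but that \emph{every} nontrivial element of it is atoroidal --- which forces the cone points of the two generators to be destroyed in every mixed word and the invariant simplices of every element to avoid the dense set of rational currents. A secondary, organizational difficulty is keeping the two hypotheses --- purely atoroidal below and non-virtually-cyclic on the minimal invariant factors above --- propagating consistently through the induction, across the finite-index passages needed to straighten out $\calH$--permutations of free-factor-system components and across the simultaneous treatment of several orbits, so that one ends with a single copy of $F_2$ atoroidal with respect to every conjugacy class of $\FN$ at once, rather than infinitely many copies each killing one conjugacy class.
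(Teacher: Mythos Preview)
Your forward (``only if'') direction is essentially the paper's argument: both observe that a rank-$2$ free subgroup cannot restrict injectively to a virtually cyclic image in $\Out(A)$, and any element in the kernel of the restriction fixes every conjugacy class in $A$. The paper writes down the specific kernel element $\psi_1^{n_1}\psi_2^{-n_2}$ explicitly (using that $\varphi\big|_A$ has infinite order, so powers of the generators' restrictions agree), while you phrase it abstractly via ``non-abelian free cannot embed in virtually cyclic''; these are the same idea.

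The converse is where you diverge substantially. The paper does not prove this direction at all within the present manuscript: it simply cites \cite[Lemma~4.3]{U3}, an existing result of the second author, which already establishes that under the stated hypotheses $\calH$ contains a purely atoroidal free subgroup. Your proposal instead attempts to reprove that lemma from scratch by climbing an $\calH$--invariant filtration and combining the multi-edge and single-edge machinery of the current paper. This is a genuinely different and far more laborious route.

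Your sketch has a real gap in the single-edge step, which you yourself flag as the main obstacle. Proposition~\ref{prop:atoroidal} of this paper produces, from two almost-atoroidal elements with distinct cone points, a \emph{single} atoroidal product $\theta^m\varphi^n$; it does not show that the entire subgroup $\langle\theta^m,\varphi^n\rangle$ is purely atoroidal. Your claim that in a mixed word ``the two cone points cancel'' and the invariant simplices of every element avoid all rational currents is precisely what would need to be proved, and the generalized north-south dynamics of Theorem~\ref{th:gns} does not obviously give this: the cones $\hDelta_\pm$ are not points, they share the base simplices $\Delta_\pm$ with the conjugate element, and tracking the dynamics of an arbitrary word requires control you have not established. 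The paper avoids this entirely by invoking the prior result; if you want a self-contained argument, this step needs substantial additional work beyond what the present paper provides.
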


\begin{proof} 
The ``if'' direction follows from \cite[Lemma 4.3]{U3}. For the other direction, let $A<\FN$ be a minimal $\calH$--invariant free factor such that the restriction of $\calH$ to $A$ is virtually cyclic (see Section~\ref{subsec:relative} for definitions). The proof of \cite[Lemma 4.3]{U3} implies that the restriction of each element in $\calH$ to $A$ has a power which is equal to a power of the restriction of $\varphi$ to $A$. Now assume that $\calH$ contains a subgroup isomorphic to $F_{2}$, generated by $\psi_1, \psi_2$. By above observation, there exist nonzero integers $n_{1},n_{2}, k$ such that $\psi_1\big|_{A}^{n_{1}}=\varphi\big|_{A}^{k}=\psi_2\big|_{A}^{n_{2}}$.  Then the element $\psi_1^{n_{2}}\psi_2^{-n_{1}}\in \I{\psi_1,\psi_2}$ fixes each element in $A$ and hence is not atoroidal.  Thus the subgroup $\I{\psi_{1},\psi_{2}}$ is not purely atoroidal.
\end{proof}



\medskip

\noindent {\bf Organization of paper.} Section~\ref{sec:outf} reviews the theory of outer automorphisms needed.  In particular, the notions of free factor systems, the Handel--Mosher subgroup decomposition and train tracks are recalled.  Definitions of geodesic currents are presented in Section \ref{sec:currents}.  As mentioned above, we deal separately with multi-edge and single-edge extensions.  Section~\ref{sec:push past multi edge} shows how to apply the results of Handel--Mosher and Guirardel--Horbez to push past multi-edge extensions.  The main technical result, that of generalized North-South dynamics for co-rank $1$ atoroidal elements, constitutes the majority of Section~\ref{sec:single-edge}.   In Section~\ref{sec:push past one edge}, we show how to apply this result to push past single-edge extensions.  Lastly, in Section~\ref{sec:proof}, we combine the above two cases to complete the proof of Theorem~\ref{th:alternative}.

\medskip 

\noindent {\bf Acknowledgements.} The authors thank Camille Horbez for telling them about his upcoming work with Guirardel \cite{un:GH} and useful discussions. Second author is grateful to Jon Chaika for illuminating discussions regarding ergodic theory.  The authors also thank the anonymous referee for a careful reading and several helpful suggestions.


\section{Outer automorphisms and train tracks}\label{sec:outf}

In this section we collect definitions and some of the fundamental results regarding $\Out(\FN)$ we use in the sequel.


\subsection{Graphs, maps and markings}\label{subsec:graphs}

A \emph{graph} $G$ is a $1$--dimensional cell complex.  The $0$--cells of $G$ are called \emph{vertices}, and the $1$--cells of $G$ are called \emph{\textup{(}topological\textup{)} edges}. We denote the set of vertices by $VG$ and the set of edges by $E_{top}G$. Identifying the interior of each topological edge $e\in E_{top}G$ with the open interval $(0,1)$ we get exactly two orientations on $e$.  The set of oriented edges of $G$ is denoted by $EG$. For each edge $e\in E_{top}G$, we choose a \emph{positive} orientation for $e$, and denote the set of positively oriented edges by $E^{+}G$. Given an oriented edge $e \in EG$, the edge with the opposite orientation is denoted by $e^{-1}$. Furthermore, we denote the initial point of the oriented edge $e$ by $o(e)$ and the terminal point by $t(e)$.

Of particular importance is the $N$--rose, denoted by $R_{N}$, which is the graph with a single vertex $v$ and $N$ edges.  We fix an isomorphism $\FN \cong \pi_{1}(R_{N},v)$ which we will use implicitly throughout.  Using this isomorphism, homotopy equivalences of $R_{N}$ determine outer automorphisms of $\FN$ and vice versa.   
 
An \emph{edge path} $\gamma$ of length $n$ is a concatenation $\gamma=e_1e_2\ldots e_n$ of oriented edges in $G$ such that $t(e_i)=o(e_{i+1})$ for all $i=1, \ldots, n-1$.  The length of a path is denoted by $|\gamma|$.  The edge path $\gamma$ as above is called \emph{reduced} if $e_{i}\neq e^{-1}_{i+1}$ for all $i=1, \ldots, n-1$.  Further, a reduced edge path $\gamma=e_1e_2 \ldots e_n$ is called  \emph{cyclically reduced} if $t(e_n)=o(e_1)$ and $e_{n}\neq e_1^{-1}$. For any edge path $\gamma$, there is a unique reduced edge path $[\gamma]$ homotopic to $\gamma$ rel endpoints.  

A (topological) \emph{graph map}  $f \from G_{0} \to G_{1}$ is a homotopy equivalence where: 
\begin{itemize}
\item $f(VG_{0}) \subseteq VG_{1}$; and 
\item the restriction of $f$ to interior of an edge is an immersion. 
\end{itemize}
These conditions imply that for each oriented edge $e\in EG_{0}$, the image $f(e)$ determines a reduced edge path.  A graph map $m \from R_{N} \to G$ is called a \emph{marking of $G$}.  Suppose $m \from R_{N} \to G$ is a marking and fix a graph map $m' \from G \to R_{N}$ that is homotopy inverse to $m$.  We say that a graph map $f \from G \to G$ is a \emph{topological representative} of the outer automorphism $\varphi\in\Out(\FN)$ if the outer automorphism determined by the homotopy equivalence $m' \circ f \circ m \from R_{N} \to R_{N}$ is $\varphi$.

A \emph{filtration} for a topological representative $f \from G\to G$ is an increasing sequence of $f$--invariant subgraphs $\emptyset=G_0\subset G_1\subset \cdots \subset G_{\ell} =G$. The \emph{r{\rm th}-stratum} in this filtration, denoted by $H_r$, is the closure of $G_r - G_{r-1}$.  Associated to each stratum $H_r$ there is a square matrix whose row and columns are indexed by the edges in $H_{r}$ called the \emph{transition matrix} $M_{r}$, which is non-negative and has integer entries.  The $ij$th entry of $M_{r}$ records the number of times the reduced path $f(e_i)$ crosses the edge $e_j$ or the edge $e_{j}^{-1}$.

Recall, a non-negative square matrix $M$ is called \emph{irreducible} if for each $i,j$, there exists $p = p(i,j)$ such that $M^{p}_{ij}>0$. 
We say that the stratum $H_r$ is \emph{irreducible} if the associated transition matrix $M_r$ is irreducible.  If $M_r$ is irreducible then it has a unique eigenvalue $\lambda_{r} \geq 1$ called the \emph{Perron-Frobenius} eigenvalue, for which the associated eigenvector is positive. We say that $H_r$ is an \emph{exponentially growing \textup{(}EG\textup{)} stratum} if $\lambda_{r} > 1$. We say that $H_r$ is a \emph{non-exponentially growing \textup{(}NEG\textup{)} stratum} if $\lambda_{r} = 1$. Finally, we say that $H_r$ is a \emph{zero stratum} if $M_r$ is the zero matrix. 


\subsection{Free factor systems and geometric realizations}\label{subsec:free factor}

A \emph{free factor} $\A<\FN$ is a subgroup of $\FN$ such that $\FN = \A \ast \B$ where $\B<\FN$ is a (possibly trivial) subgroup of $\FN$.  A free factor is called \emph{proper} if it is neither the trivial subgroup nor $\FN$.  The conjugacy class of a free factor $\A$ is denoted by $[\A]$.  A \emph{free factor system} $\calF=\{[\A^{1}], \ldots, [\A^{k}]\}$ is a collection of conjugacy classes of free factors of $\FN$ such that 
\begin{equation*}
\FN = \A^{1} \ast \A^2 \ast \cdots \ast \A^k \ast \B
\end{equation*}
for some representatives $\A^{i}$ of $[\A^{i}]$ and for some (possibly trivial) subgroup $\B<\FN$.  

A subgraph $K \subseteq G$ of a marked graph $G$ determines a free factor system $\calF(K)$ of $\FN$ in the following way.  Enumerate the non-contractible components of $K$ by $C_{1}, \ldots, C_{k}$, fix vertices $v_{i} \in C_{i}$ and edge paths $\gamma_{i}$ from $v_{i}$ to $v$ (some arbitrary vertex of $G$).  These paths induce inclusions $\pi_{1}(C_{i},v_{i}) \to \pi_{1}(G,v)$.  The conjugacy classes of the images do not depend on the $v_{i}$'s nor the $\gamma_{i}$'s and the collection $\{[\pi_{1}(C_{1},v_{1})] , \ldots, [\pi_{1}(C_{k},v_{k})] \}$ is a free factor system of $\pi_{1}(G,v)$.  Using the marking of $G$ we obtain a free factor system $\calF(K)$ of $\FN$.

There is a natural partial order among free factor systems.  Given free factor systems $\calF_{0} =\{[\A^1], \ldots, [\A^{k}]\}$ and $\calF_{1}=\{[\B^1], \ldots, [\B^\ell]\}$ we say that $\calF_{0}$ is \emph{contained} in $\calF_{1}$ (or $\calF_{1}$ is an \emph{extension} of $\calF_{0}$) and write $\calF_{0} \sqsubset \calF_{1}$ if for each $i=1,\ldots, k$, there exist $j\in \{1,\ldots, \ell\}$ and $g\in\FN$ such that $\A^{i}$ is a subgroup of $g\B^{j}g^{-1}$.  An extension $\calF_{0} \sqsubset \calF_{1}$ is called a \emph{single-edge extension} if there exists a marked graph $G$ with subgraphs $G_{0},G_{1}$ such that $\calF(G_{0}) = \calF_{0}$, $\calF(G_{1}) = \calF_{1}$ and $G_{1} - G_{0}$ is a single edge.  Otherwise, $\calF_{0} \sqsubset \calF_{1}$ is called a \emph{multi-edge extension}.  There are three types of single-edge extensions. In a \emph{circle extension} $G_1$ is obtained from $G_{0}$ by adding a disjoint loop edge.  In a \emph{barbell extension}, a single edge is attached to two distinct components of $G_{0}$.  Finally, attaching an edge to the same component of $G_{0}$ gives a \emph{handle extension}. 

 A \emph{filtration of $\FN$ by free factor systems} is an ascending sequence $\emptyset = \calF_{0} \sqsubset \calF_{1} \sqsubset \cdots \sqsubset \calF_{k} = \{[ \FN] \}$ of free factor systems.  We say that a filtration $\emptyset = \calF_{0} \sqsubset \calF_{1} \sqsubset \cdots \sqsubset \calF_{k} = \{[ \FN] \}$ is \emph{realized} by the filtration $\emptyset=G_0\subset G_1\subset \cdots \subset G_{\ell}=G$ of a marked graph $G$ if for each $i = 1, \ldots, k$ there is an $j \in \{1, \ldots, \ell \}$ such that $\calF_{i} = \calF(G_{j})$.


\subsection{Relative outer automorphisms}\label{subsec:relative}

Outer automorphisms act on the set of conjugacy classes of free factors and on the set of free factor systems.  An element $\varphi \in \Out(\FN)$ is \emph{irreducible} if there does not exist a proper free factor system $\calF$ such that $\varphi\calF = \calF$; $\varphi$ is \emph{fully irreducible} if $\varphi^{p}$ is irreducible for all $p \geq 1$.  If $\calF_{0} \sqsubset \calF_{1}$ is a $\varphi$--invariant extension, we say $\varphi$ is \emph{irreducible with respect to $\calF_{0} \sqsubset \calF_{1}$} if there does not exist a $\varphi$--invariant factor free system $\calF \neq \calF_{0},\calF_{1}$ such that $\calF_{0} \sqsubset \calF \sqsubset \calF_{1}$; $\varphi$ is \emph{fully irreducible with respect to $\calF_{0} \sqsubset \calF_{1}$} if $\varphi^{p}$ is irreducible with respect to $\calF_{0} \sqsubset \calF_{1}$ for all $p \geq 1$.  Irreducibility is equivalent to irreducibility with respect to the extension $\{[ \I{1} ]\} \sqsubset \{[\FN]\}$. 

We usually work with elements in the finite-index subgroup:
\begin{equation*}
\IA_{N}(\ZZ/3) = {\rm ker}\bigl(\Out(\FN) \to \Aut(H_{1}(\FN,\ZZ/3))\bigr).
\end{equation*}
For elements in this subgroup, periodic phenomena become fixed. In particular, Handel--Mosher showed that for any $\varphi \in \IA_{N}(\ZZ/3)$: 
\begin{enumerate}

\item any $\varphi$--periodic free factor system in $\FN$ is fixed by $\varphi$ \cite[Theorem~3.1]{HMpart2}; and
\item any $\varphi$--periodic conjugacy class in $\FN$ is fixed by $\varphi$ \cite[Theorem~4.1]{HMpart2}.

\end{enumerate}
Thus irreducible and fully irreducible are identical notions in this subgroup.

Of central importance to the theory of relative outer automorphisms is the Handel--Mosher Subgroup Decomposition Theorem. 

\begin{theorem}[{\cite[Theorem~D]{HMIntro}}]\label{th:HMdecomp}
Given a finitely generated subgroup $\calH < \IA_{N}(\ZZ/3)$ and a maximal $\calH$--invariant filtration $\emptyset = \calF_{0} \sqsubset \calF_{1} \sqsubset \cdots \sqsubset \calF_{k} = \{[ \FN] \}$, for each $i = 1,\ldots, k$ such that $\calF_{i-1} \sqsubset \calF_{i}$ is a multi-edge extension, there is an element $\varphi_{i} \in \calH$ that is irreducible with respect to $\calF_{i-1} \sqsubset \calF_{i}$.
\end{theorem}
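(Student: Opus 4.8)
\medskip
\noindent\emph{Proof idea.} The plan is to recast the conclusion as a loxodromicity statement for an action on a hyperbolic complex, and then argue by contradiction against the maximality of the filtration. Since $\calF_{i-1} \sqsubset \calF_i$ is a multi-edge extension there are free factor systems strictly between $\calF_{i-1}$ and $\calF_i$, and one forms the relative free factor complex $\fF(\calF_{i-1},\calF_i)$ whose vertices are the free factor systems $\calF'$ with $\calF_{i-1} \sqsubset \calF' \sqsubset \calF_i$, $\calF' \neq \calF_{i-1},\calF_i$; by the relative version of Bestvina--Feighn's theorem this complex is Gromov hyperbolic, and $\calH$ acts on it by isometries because $\calH$ fixes both ends of the extension. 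By the relative version of Bestvina--Feighn's loxodromicity criterion, $\psi \in \Out(\FN)$ acts loxodromically on $\fF(\calF_{i-1},\calF_i)$ exactly when $\psi$ is fully irreducible with respect to $\calF_{i-1} \sqsubset \calF_i$; for $\psi \in \IA_{N}(\ZZ/3)$ this coincides with being irreducible with respect to the extension, by the fact that $\varphi$--periodic free factor systems are $\varphi$--fixed. So it suffices to produce a loxodromic element of $\calH$.

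The next step is to invoke the classification of isometric actions of finitely generated groups on (not necessarily proper) Gromov hyperbolic spaces. If $\calH$ contains a loxodromic element we are done and take $\varphi_i$ to be it. Otherwise the action is non-loxodromic, so $\calH$ either has bounded orbits or fixes a unique point $\xi \in \bd\fF(\calF_{i-1},\calF_i)$. In the bounded-orbit case I would apply the relative analog of the Handel--Mosher theorem that a finitely generated subgroup of $\Out(\FN)$ with bounded orbits in a free factor complex fixes a proper free factor system; here this yields an $\calH$--invariant $\calF'$ with $\calF_{i-1} \sqsubset \calF' \sqsubset \calF_i$ and $\calF' \neq \calF_{i-1},\calF_i$. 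In the case of a fixed boundary point, $\xi$ is represented by a tree/lamination arational relative to $\calF_{i-1}$, and the structure theory of stabilizers of arational trees (in its relative form) shows that a finitely generated subgroup fixing $\xi$ and containing no loxodromic element again fixes such an $\calF'$. Either way $\calH$ fixes a free factor system strictly between $\calF_{i-1}$ and $\calF_i$, contradicting the maximality of the given $\calH$--invariant filtration; hence $\calH$ must contain a loxodromic, i.e.\ irreducible-with-respect-to-the-extension, element $\varphi_i$.

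The main obstacle is proving the two fixed-point statements used in the contradiction step; neither is formal, and together they constitute the technical core. The engine in the original Handel--Mosher treatment is a ping-pong argument on the space of laminations relative to $\calF_{i-1}$: if $\calH$ contained two elements whose attracting laminations -- each filling relative to $\calF_{i-1}$ and carried by $\calF_i$ -- were sufficiently transverse, then a product of high powers would exhibit relative north--south dynamics (on $\PCurr(\FN)$ or on the space of projectivized relative laminations) and hence be irreducible with respect to the extension; conversely, with no such transverse pair all of $\calH$ preserves a common sublamination, which one promotes to an $\calH$--invariant intermediate free factor system. Making this precise requires the completely split relative train track (CT) technology to control iteration relative to $\calF_{i-1}$, the weak attraction theory of attracting laminations, and -- for the boundary case -- the description of stabilizers of relatively arational trees; the relative bookkeeping (everything taken relative to $\calF_{i-1}$ rather than absolutely) is what makes the argument lengthy. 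An alternative to the lamination machinery, closer to the proposal above, is to run the whole argument directly on the action on $\fF(\calF_{i-1},\calF_i)$ and the other relevant hyperbolic complexes, as in the Horbez and Guirardel--Horbez treatments.
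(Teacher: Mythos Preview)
The paper does not prove this theorem; it is quoted as a black box from Handel--Mosher \cite[Theorem~D]{HMIntro}, with only the follow-up Remark~\ref{rem:HMdecomp} noting that a single element can be chosen to work for all multi-edge extensions simultaneously. So there is no ``paper's own proof'' to compare your proposal against.

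That said, your sketch is a reasonable outline of the two existing approaches. The route you foreground---action on the relative free factor complex $\fF(\calF_{i-1},\calF_i)$, classification of group actions on hyperbolic spaces, and a fixed-point theorem in the bounded/horocyclic cases---is essentially the Horbez and Guirardel--Horbez line you mention at the end, and is the one the present paper leans on elsewhere (e.g.\ Theorem~\ref{th:relativecosurface}). The original Handel--Mosher proof is the lamination ping-pong you describe in your final paragraph. You correctly identify the hard part: in the bounded-orbit case one needs a genuine theorem (not a formality) that a finitely generated subgroup with bounded orbits in the relative free factor complex fixes a vertex, and in the parabolic case one needs the structure of stabilizers of relatively arational trees. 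Both are substantial and are exactly what the cited works supply; your proposal is an accurate roadmap rather than a proof.
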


\begin{remark}\label{rem:HMdecomp}
In fact, a single $\varphi \in \calH$ satisfies the conclusion of the theorem~\cite[Theorem~6.6]{CU}.
\end{remark}

We denote the stabilizer in $\Out(\FN)$ of a free factor system $\calF$ of $\FN$ by $\Out(\FN;\calF)$.  If $\calF = \{[A]\}$, we usually write $\Out(\FN;A)$ for this subgroup.  

Suppose $A < \FN$ is a free factor and $\varphi \in \Out(\FN;A)$.  Then there is an automorphism $\Phi \in \varphi$ such that $\Phi(A) = A$.  The outer automorphism class of the restriction of $\Phi$ to $A$ is the same for any representative of $\varphi$ that fixes $A$, we denote the resulting outer automorphism by $\varphi\big|_{A} \in \Out(A)$.  Moreover, the assignment $\varphi \mapsto \varphi\big|_{A}$ is a homomorphism from $\Out(\FN;A)$ to $\Out(A)$~\cite[Fact~1.4]{HMpart1}.  

If $\varphi \in \Out(\FN)$ fixes each element of a free factor system $\calF = \{[A^{1}],\ldots,[A^{k}]\}$ then we write $\varphi\big|_{\calF}$ to refer to the collection of maps $\left\{\varphi\big|_{A^{1}},\ldots,\varphi\big|_{A^{k}}\right\}$.  This happens in particular when $\varphi \in \IA_{N}(\ZZ/3) \cap \Out(\FN;\calF)$.  If we say $\varphi\big|_{\calF}$ has some property (e.g. is atoroidal), we mean each of the maps $\varphi\big|_{A^{i}}$ has this property.  


\subsection{Train tracks and CTs}\label{subsec:train-track}

Train track maps are a type of graph map with certain useful features that were first introduced by Bestvina--Handel in order to study the dynamics of irreducible outer automorphisms of $\FN$.  Not every outer automorphism is represented by a train track map, but they can be represented by a generalization called a \emph{relative train track map} \cite{BH92}.  Since their original construction, train track maps have been improved upon giving finer control over certain aspects of the maps.  For our purpose, we will work with a \emph{completely split train track map \textup{(}CT\textup{)}} introduced by Feighn--Handel \cite{FH}.  The definition of a $\CT$ is rather long and technical and so after giving the definition of a relative train track map below (Definition~\ref{def:relative train track}), we will only state the relevant properties of a CT needed in the sequel (Lemma~\ref{CT-properties}).  We also quote the key result that after passing to a power, every outer automorphism can be represented by a CT (Theorem~\ref{th:CT exist}).

A graph map $f \from G\to G$ induces a \emph{derivative map} $Df \from EG\to EG$ on the set of oriented edges by setting $Df(e)$ equal to the first edge in the edge path $f(e)$.  A \emph{turn} in $G$ is an unordered pair $(e_1, e_2)$ of oriented edges in $G$ where $o(e_{1}) = o(e_{2})$.  A turn $(e_{1},e_{2})$ is called \emph{degenerate} if $e_1=e_2$, otherwise it is called \emph{non-degenerate}. A turn $(e_{1},e_{2})$ is called \emph{illegal} if its image $\bigl((Df)^{k}(e_1), (Df)^{k}(e_2)\bigr)$ under an iterate of the derivative map is degenerate for some $k \geq 1$, otherwise it is called \emph{legal}. An edge path $e_1e_2\ldots e_n$ is called legal if each turn $(e_i^{-1}, e_{i+1})$ for $i=1, \ldots, n-1$ is legal. 

Suppose $\emptyset=G_0\subset G_1\subset \cdots\subset G_\ell=G$ is a filtration of the map $f$.  We say that a turn $(e_1,e_2)$ is contained in the stratum $H_r$ if both edges $e_1, e_2$ are in $EH_r$.  An edge path $\gamma$ is called \emph{$r$--legal}, if every turn in $\gamma$ that is contained in $H_r$ is legal. A \emph{connecting path} for $H_r$ is a nontrivial reduced path $\gamma$ in $G_{r-1}$ whose endpoints are in $G_{r-1} \cap H_r$; it is \emph{taken} if it is the subpath of $[f^{k}(e)]$ for some edge $e$ that belongs to an irreducible stratum.  

\begin{definition}\label{def:relative train track} 
A topological graph map $f:G\to G$ equipped with a filtration $\emptyset=G_0\subset G_1\subset \cdots\subset G_\ell=G$ is called a \emph{relative train track map} if for each exponentially growing stratum $H_r$ the following hold: 

\begin{enumerate}
\item for each edge $e\in EH_r$, $(Df)^{k}(e)\in EH_{r}$ for all $k \geq 1$;
\item for each connecting path $\gamma$ for $H_r$, the path $[f(\gamma)]$ is also a connecting path for $H_r$; and 
\item if $\gamma$ is $r$--legal, then so is $[f(\gamma)]$. 
\end{enumerate}
\end{definition}

The notion of a \emph{geometric stratum} for a relative train track map was introduced and studied by Bestvina--Feighn--Handel \cite{BFH00}, and studied extensively by Handel--Mosher in the CT setting~\cite{HMpart1}.  Suppose $\emptyset = G_{0} \subset G_{1} \subset \cdots \subset G_{\ell} = G$ is a filtration for a relative train track map $f \from G \to G$.  A stratum $H_r$ is called \emph{geometric} if there exist a compact surface $S$ with $k+1$ boundary components $\alpha_0, \alpha_1,\ldots, \alpha_k$ and a pseudo-Anosov homeomorphism $h \from S \to S$ with the following properties.

\begin{itemize}
\item The homeomorphism $h$ extends to a homotopy equivalence $h \from S\cup G_{r-1} \to S \cup G_{r-1}$ where $S$ is attached to $G_{r-1}$ by attaching the boundary components $\alpha_{1},\ldots,\alpha_{k}$ to $k$ circuits in $G_{r-1}$.

\item There is an embedding $G_{r} \hookrightarrow S \cup G_{r-1}$ that restricts to the identity on $G_{r-1}$ and a deformation retraction $d \from S \cup G_{r-1} \to G_{r}$ such that $fd \simeq dh$.  
\end{itemize}

We can extend this notion to subgroups of $\Out(\FN)$.  Suppose $\calH$ is a subgroup of $\Out(\FN)$ and $\calF_{0} \sqsubset \calF_{1}$ is a multi-edge extension invariant under $\calH$.  We say the extension is \emph{geometric} if for each $\varphi \in \calH$ there is a relative train track map $f \from G \to G$ with a filtration $\emptyset = G_{0} \subset G_{1} \subset \cdots \subset G_{\ell} = G$ realizing the filtration for $\FN$ such that the stratum $H_{r}$ is geometric where $\calF_{0} = \calF(G_{r-1})$ and $\calF_{1} = \calF(G_{r})$, without the assumption that the associated homeomorphism $h \from S \to S$ is pseudo-Anosov.  We call $S$ a \emph{geometric model} for $\varphi$.

The following lemma summarizes the key additional properties of CT maps that we will use.  To state the first of these properties, we need the following definition.  A path $\rho$ in $G$ is a \emph{Nielsen path} if $[f^{k}(\rho)]=\rho$ for some $k \geq 1$; it is an \emph{indivisible Nielsen path} if further it does not split as the concatenation of two non-trivial Nielsen paths. 

\begin{lemma}\label{CT-properties}  
Suppose $f \from G \to G$ is a CT map with filtration $\emptyset = G_{0} \subset G_{1} \subset \cdots \subset G_{\ell} = G$.    
\begin{enumerate}

\item If $H_r$ is a non-geometric EG stratum, then there does not exist a closed Nielsen path $\rho \subset G_{r}$ that intersects $H_{r}$ nontrivially \textup{(}\cite[Corollary~4.19~eg(ii)]{FH} and \cite[Fact~1.42~(1b)]{HMpart1}\textup{)}.\label{CT-properties:EG}

\item If $H_r$ is an NEG stratum, then $H_r$ consists of a single edge $e$.  Furthermore, either $e$ is fixed, or $f(e) = e\gamma$ where $\gamma$ is a nontrivial cyclically reduced path in $G_{r-1}$ \textup{(}\cite[Lemma~4.21]{FH}\textup{)}.\label{CT-properties:NEG}
\end{enumerate}
\end{lemma}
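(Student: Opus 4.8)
The plan is to deduce both assertions directly from the structure theory of CTs developed by Feighn--Handel \cite{FH} and Handel--Mosher \cite{HMpart1}; no new argument is needed, and the proof amounts to pointing to the relevant results, with a sentence of bookkeeping in the first part.

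For the first assertion, I would recall that in a CT the EG strata are partitioned into geometric and non-geometric ones, and that property eg(ii) of \cite[Corollary~4.19]{FH}, restated in \cite[Fact~1.42~(1b)]{HMpart1}, rules out a closed \emph{indivisible} Nielsen path crossing a non-geometric EG stratum $H_r$.  To obtain the stated conclusion for an arbitrary closed Nielsen path $\rho \subset G_r$ meeting $H_r$ nontrivially, I would apply the complete-splitting property of a CT: $\rho$ splits as a concatenation of indivisible Nielsen paths and fixed edges, and since $H_r$ is EG it contains no fixed edge, so one of these indivisible pieces crosses $H_r$.  The point requiring care --- and the only real obstacle in the proof --- is that this piece need not itself be closed; to finish one uses the endpoint information that the cited references record for the $H_r$-crossing indivisible Nielsen paths to rule out that any concatenation involving such a piece could close up while still crossing $H_r$.  (If one takes the references as already phrased for arbitrary closed Nielsen paths, this step is subsumed.)

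For the second assertion there is nothing to prove beyond quoting \cite[Lemma~4.21]{FH}: in a CT each NEG stratum $H_r$ is a single oriented edge $e$, and the map $f$ either fixes $e$ or satisfies $f(e) = e\gamma$ with $\gamma$ a nontrivial closed path in $G_{r-1}$, while the (NEG) normalization built into the definition of a CT ensures that $\gamma$ can be taken cyclically reduced.  I would simply cite this.
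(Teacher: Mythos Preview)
Your proposal is correct and matches the paper's approach: the paper does not prove this lemma at all, treating it purely as a summary of the cited results from \cite{FH} and \cite{HMpart1} with the references embedded in the statement itself. Your extra sentence of bookkeeping in part~(1) --- splitting an arbitrary closed Nielsen path into indivisible pieces and fixed edges --- is more detail than the paper gives, but is harmless and in the spirit of the cited sources.
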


The edge $e$ of an NEG stratum is called a \emph{fixed edge} if $f(e)=e$, a \emph{linear edge} if $f(e)=e  \rho$ where $\rho$ is a nontrivial Nielsen path, and a \emph{superlinear edge} otherwise.  We conclude this section by stating the theorem providing the existence of CT maps.

\begin{theorem}[{\cite[Theorem 4.28, Lemma 4.42]{FH}}]\label{th:CT exist} 
There exist a constant $M = M(N) \ge 1$ such that for any $\varphi \in \Out(\FN)$, and any nested sequence $\calC$ of $\varphi^{M}$-invariant free factor systems, there exists a CT map $f \from G \to G$ that represents $\varphi^{M}$ and realizes $\calC$. 
\end{theorem}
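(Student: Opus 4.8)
The plan is to recall the architecture of the proof in \cite{FH}. The first step is to pass to a power so that $\varphi^{M}$ is \emph{rotationless} in the sense of \cite{FH} --- informally, so that all the relevant periodic data of a topological representative (periodic directions, periodic Nielsen paths, the axes of linear edges) is actually fixed rather than merely periodic; since in a fixed rank this data can occur in only finitely many ways, a single bound $M = M(N)$ works for every $\varphi \in \Out(\FN)$. Fixing this $M$, and noting that each free factor system in $\calC$ is $\varphi^{M}$--invariant, one builds a marked graph $G$ with a filtration by subgraphs realizing $\calC$ together with a topological representative $g \from G \to G$ of $\varphi^{M}$ preserving this filtration; such a $g$ exists by the standard construction, after subdividing so that the subgraphs realizing $\calC$ are genuinely $g$--invariant.

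The second step is to upgrade $g$ to a \emph{relative train track map} realizing $\calC$ by the Bestvina--Handel improvement algorithm, as refined by Bestvina--Feighn--Handel: one repeatedly applies elementary moves --- subdivisions, folds, collapse of invariant subforests, sliding of NEG edges, removal of inessential connecting paths, and the like --- always staying within the class of representatives realizing $\calC$, until the three conditions of Definition~\ref{def:relative train track} hold at every exponentially growing stratum. The third step enforces the remaining CT axioms: that each NEG stratum is a single edge of the form in Lemma~\ref{CT-properties}, that zero strata are enveloped by EG strata with controlled taken connecting paths, that linear edges carry a consistent family of axes, that indivisible Nielsen paths in EG strata are tamed, and --- the crucial property --- that the image of every edge in an irreducible stratum, and every Nielsen path, is \emph{completely split} into fixed edges, linear edges, exceptional paths, taken connecting paths in zero strata, and taken subpaths of EG strata. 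A final verification shows the resulting map is a CT, still represents $\varphi^{M}$, and is rotationless by the choice of $M$.

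I expect the main obstacle to be the \emph{termination} of this improvement process. One must equip the set of representatives realizing $\calC$ with a complexity --- a lexicographically ordered tuple recording, say, the number of EG strata, the vector of Perron--Frobenius eigenvalues, and a list of secondary invariants --- and check that each move either strictly decreases this complexity or keeps it fixed while making definite progress toward the next axiom, with only finitely many complexity-preserving moves in a row. The truly delicate part is sequencing the operations so that establishing one CT axiom does not destroy those already arranged, and so that the filtration $\calC$ is realized throughout; this bookkeeping is the technical heart of \cite{FH}. Note that the uniformity of $M = M(N)$ comes entirely from the uniform rotationless bound: once the power is fixed, the improvement algorithm is run on that single automorphism and need not terminate in a number of steps bounded in terms of $N$.
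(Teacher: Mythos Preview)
The paper does not give a proof of this statement; Theorem~\ref{th:CT exist} is quoted directly from Feighn--Handel \cite[Theorem~4.28, Lemma~4.42]{FH} with no argument supplied, so there is no ``paper's own proof'' to compare against. Your proposal is a reasonable high-level summary of the architecture in \cite{FH}: the uniform rotationless power, construction of an initial representative realizing $\calC$, the Bestvina--Handel improvement moves to obtain a relative train track, and then the further modifications enforcing the CT axioms, with termination governed by a lexicographic complexity. That is indeed the shape of the argument in the cited source, though of course what you have written is an outline rather than a proof --- the actual verification that the moves terminate and that later modifications preserve earlier axioms occupies many pages of \cite{FH} and cannot be reconstructed from this sketch alone.
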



\section{Geodesic currents}\label{sec:currents}

The way we demonstrate that an element of $\Out(\FN)$ is atoroidal is by showing that it acts on a certain space without a periodic orbit.  The space we consider is the space of geodesic currents, which naturally contains the set of conjugacy classes of nontrivial elements of $\FN$.  We describe this space and its key features in this section.  More details can be found in~\cite{Ka2}.

Let $\partial \FN$ denote the \emph{Gromov boundary} of $F_N$.  The \emph{double boundary} of $\FN$ is defined to be the set:
\begin{equation*}
\partial^{2} \FN=(\partial \FN\times \partial \FN \setminus \Delta) /\sim
\end{equation*}
where $\sim$ is the flip relation $(x,y) \sim (y,x)$, and $\Delta$ is the diagonal.  This set is naturally identified with the set of unoriented bi-infinite geodesics in $\widetilde{R}_{N}$, the universal cover of $R_{N}$.  The group $\FN$ acts on itself by left multiplication, which induces an action of $\FN$ on both $\partial \FN$ and $\partial^{2}\FN$. 

A \emph{geodesic current on $\FN$} is a non-negative Radon measure on $\partial^{2}\FN$ that is invariant under the action of $\FN$.  The space of geodesic currents on $\FN$, denoted by $\Curr(\FN)$, is equipped with the weak-* topology.  We give more specifics about the topology later. 

The following construction is the most natural example of a geodesic current. Let $g \in \FN$ be a nontrivial element that is not a proper power, i.e.,~$g \neq h^{k}$ for some $h\in\FN$, and $k>1$. Let $(g^{-\infty}, g^{\infty})$ be the unoriented bi-infinite geodesic labeled by $g$'s. For any such $g$ we define the \emph{counting current} $\eta_{g} \in \Curr(\FN)$ as follows. If $S\subset\partial^{2}\FN$ is a Borel subset we set:
\begin{equation*}
\eta_{g}(S) = \#\abs{S \cap \FN(g^{-\infty},g^{\infty})}.
\end{equation*}
This definition does not depend on the representative of the conjugacy class $[g]$ of $g$, so we will use $\eta_{[g]}$ and $\eta_{g}$ interchangeably.  For an arbitrary $g$, we write $g=h^{k}$ where $h$ is not a proper power and define $\eta_{g}=k\eta_{h}$.  The set of scalar multiples of all counting currents are called \emph{rational currents}.  An important fact about rational currents is that they form a \emph{dense} subset of $\Curr(\FN)$ \cite{Bosurvey, Ka2, Martin}

The group $\Aut(\FN)$ acts by homeomorphisms on $\Curr(\FN)$ as follows.  An automorphism $\Phi\in \Aut(\FN)$, extends to a homeomorphism of both $\partial\FN$ and $\partial^{2}\FN$ which we still denote by $\Phi$, and for $\mu \in \Curr(F_{n})$ we define:
\begin{equation*}
(\Phi\mu)(S)=\mu(\Phi^{-1}(S))
\end{equation*}
for any Borel subset $S$ of $\partial^{2}\FN$.  The $\FN$--invariance of the measure implies that the group $\Inn(\FN)$ of inner automorphisms acts trivially, hence we obtain an action of $\Out(\FN)=\Aut(\FN)/\Inn(\FN)$ on $\Curr(\FN)$. On the level of conjugacy classes one can easily verify that $\varphi\eta_{[g]}=\eta_{\varphi [g]}$.

The space $\PCurr(\FN)$ of \emph{projectivized geodesic currents} is defined as the quotient of $\Curr(\FN)-\{0\}$ where two currents are deemed equivalent if they are positive scalar multiples of each other. The space $\PCurr(\FN)$ endowed with the quotient topology is compact \cite{Bosurvey, Ka2}. 
Furthermore, setting $\varphi[\mu]=[\varphi\mu]$ gives a well defined action of $\Out(\FN)$ on $\PCurr(\FN)$. 

We will now give more specifics about the topology on $\Curr(\FN)$.  Let $m \from  R_{N} \to G$ be a marking.   Lifting $m$ to the universal covers, we get a quasi-isometry $\tilde{m} \from \widetilde{R}_N \to \widetilde{G}$ and a homeomorphism $\tilde{m} \from \partial \FN \to \partial \widetilde{G}$.  Given a reduced edge path $\tilde\gamma$ in $\tilde{G}$ the \emph{cylinder set} of $\tilde\gamma$ is defined as 
\begin{equation*}
Cyl_m(\tilde\gamma)=\left\{(\xi_1,\xi_2)\in\partial^2F_N\mid \tilde\gamma\subset[\tilde{m}(\xi_1), \tilde{m}(\xi_2)]\right\},
\end{equation*}
where $[\tilde{m}(\xi_1), \tilde{m}(\xi_2)]$ is the bi-infinite geodesic from $\tilde{m}(\xi_1)$ to $\tilde{m}(\xi_2)$ in $\tilde{G}$ and containment is for either orientation. 

Let $\gamma$ be a reduced edge path in $G$ and let $\tilde\gamma$ be a lift of $\gamma$ to $\widetilde{G}$.  We define the number of \emph{occurrences} of $\gamma$ in $\mu$ as
\begin{equation*}
\I{\gamma,\mu}_m =\mu(Cyl_m(\tilde\gamma)).
\end{equation*}
As $\mu$ is invariant under the action of $\FN$, the quantity $\mu(Cyl_{m}(\tilde\gamma))$ does not depend on the choice of the lift $\tilde\gamma$ of $\gamma$.  Hence, $\I{\gamma,\mu}_m$ is well defined.  The marked graph will always be clear from the context and in what follows we drop the letter $m$ from the notation and use $Cyl(\tilde\gamma)$ and $\I{\gamma, \mu}$.  

Cylinder sets form a subbasis for the topology of the double boundary $\partial^{2}\FN$ and play an important role in the topology of currents. In \cite{Ka2}, it was shown that a geodesic current is uniquely determined by the set of values $\{\I{\gamma ,\mu}\}_{\gamma}$ as $\gamma$ varies over the set of all reduced edge paths in $G$. 

Furthermore, defining the \emph{simplicial length of a current $\mu$} to be $\wght{\mu} = \sum_{e \in E^{+}G} \I{e,\mu}$ we have the following characterization of limits in $\PCurr(\FN)$.  

\begin{lemma}[{\cite[Lemma~3.5]{Ka2}}]\label{lem:topology}
Suppose $([\mu_n]) \subset \PCurr(\FN)$ is a sequence and $[\mu] \in \PCurr(\FN)$.  Then \begin{equation*}
\lim_{n\to\infty}[\mu_{n}]=[\mu] \text{ \, if and only if \, } \lim_{n\to\infty} \dfrac{\langle \gamma, \mu_{n}\rangle}{|\mu_{n}|}=\dfrac{\langle \gamma, \mu\rangle}{|\mu|}
\end{equation*}
for each reduced edge path $\gamma$ in $G$. 
\end{lemma}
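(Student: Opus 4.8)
The plan is to deduce the equivalence from two structural facts about the space of currents: that every occurrence functional $\mu \mapsto \I{\gamma,\mu}$ is finite and continuous for the weak-$*$ topology on $\Curr(\FN)$, and — already recorded above, following \cite{Ka2} — that a geodesic current is completely determined by the collection of values $\{\I{\gamma,\mu}\}_\gamma$ as $\gamma$ runs over the reduced edge paths in $G$. Granting these, both implications become soft.

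The first step is to establish the continuity claim, which I would do by showing that each cylinder set $Cyl(\tilde\gamma) \subset \partial^2\FN$ is compact and open. A bi-infinite geodesic of $\widetilde G$ is determined by its pair of (distinct) endpoints in $\partial\FN$, and requiring it to contain the fixed finite segment $\tilde\gamma$ confines one endpoint to a clopen subset of $\partial\FN$ lying beyond one end of $\tilde\gamma$ and the other endpoint to a disjoint clopen subset lying beyond the other end; thus $Cyl(\tilde\gamma)$ is a compact open subset of $\partial^2\FN$ avoiding the diagonal. Consequently $\mathbf 1_{Cyl(\tilde\gamma)}$ is continuous with compact support, so $\mu \mapsto \mu(Cyl(\tilde\gamma)) = \I{\gamma,\mu}$ is weak-$*$ continuous and finite (Radon measures being finite on compacta). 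Since $E^{+}G$ is finite, the simplicial length $\wght{\param}$ is then continuous, and it is strictly positive on $\Curr(\FN)\setminus\{0\}$: if $\tilde e$ is the initial edge of $\tilde\gamma$ then $Cyl(\tilde\gamma) \subseteq Cyl(\tilde e)$, so $\I{\gamma,\mu} \le \I{e,\mu}$, and hence the vanishing of all edge occurrences would force $\mu = 0$ by the uniqueness fact.

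The forward implication is then immediate: for each reduced edge path $\gamma$ the function $\mu \mapsto \I{\gamma,\mu}/\wght{\mu}$ is continuous on $\Curr(\FN) \setminus \{0\}$ and invariant under positive scaling, so it descends to a continuous function on $\PCurr(\FN)$; evaluating along $[\mu_n] \to [\mu]$ yields $\I{\gamma,\mu_n}/\wght{\mu_n} \to \I{\gamma,\mu}/\wght{\mu}$. For the converse I would argue by subsequences. Assume the ratios converge for every $\gamma$. Since $\PCurr(\FN)$ is compact and metrizable, it suffices to show that every subsequential limit of $([\mu_n])$ equals $[\mu]$. If $[\mu_{n_k}] \to [\mu']$, the forward implication applied to this subsequence gives $\I{\gamma,\mu'}/\wght{\mu'} = \lim_{k} \I{\gamma,\mu_{n_k}}/\wght{\mu_{n_k}} = \I{\gamma,\mu}/\wght{\mu}$ for every $\gamma$; equivalently the normalized currents $\mu'/\wght{\mu'}$ and $\mu/\wght{\mu}$ have identical occurrence values, so they coincide by the uniqueness fact and $[\mu'] = [\mu]$.

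The crux is the topological input of the second paragraph — identifying cylinder sets as compact open subsets of $\partial^2\FN$, equivalently proving weak-$*$ continuity of the occurrence functionals. Everything afterwards is a formal consequence of that, of the quoted compactness of $\PCurr(\FN)$, and of the quoted reconstruction of a current from its occurrence values.
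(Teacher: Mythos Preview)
Your argument is correct. Note, however, that the paper does not supply its own proof of this lemma: it is quoted directly from \cite[Lemma~3.5]{Ka2}, so there is no in-paper argument to compare against. What you have written is essentially the standard proof one finds in Kapovich's treatment --- establishing that cylinder sets are compact open (hence their indicators lie in $C_c(\partial^2\FN)$), deducing weak-$*$ continuity of the occurrence functionals, and then combining compactness of $\PCurr(\FN)$ with the reconstruction of a current from its occurrence values to close the converse direction via subsequential limits. The one point you might make more explicit is metrizability of $\PCurr(\FN)$, which you invoke for the subsequence argument; this follows, for instance, by embedding the normalized slice $\{\wght{\mu}=1\}$ into the countable product $\prod_\gamma [0,1]$ via the occurrence map, but it is worth a sentence.
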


The value $\wght{\mu}$ does depend on the marked graph, but as before, the marked graph will always be clear from the context and so we omit it from the notation.  It follows immediately from Lemma~\ref{lem:topology} that the occurrence function $\mu \mapsto \I{\gamma,\mu}$ and the simplicial length function $\mu \mapsto \wght{\mu}$ are continuous and linear on $\Curr(\FN)$~\cite[Proposition~5.9]{Ka2}.

Given a free factor $A < \FN$, let $\iota \from A \to \FN$ be the inclusion map. There is a canonical $A$--equivariant embedding  $\partial A \subset \partial\FN$ which induces an $A$--equivariant embedding $\partial^{2}A\subset \partial^{2}\FN$.  Let $\Curr(A)$ and $\Curr(\FN)$ be the corresponding spaces of currents.  There is a natural inclusion $\iota_{A} \from \Curr(A) \to \Curr(\FN)$ defined by \emph{pushing the measure forward} via the $\FN$ action such that for each $g\in A$ we have $\iota_{A}(\eta_{g})=\eta_{\iota(g)}$, see \cite[Proposition-Definition 12.1]{Ka2}.


\section{Pushing past multi-edge extensions}\label{sec:push past multi edge}

As stated in the introduction, the strategy for proof of Theorem~\ref{th:alternative} is to work from the bottom up using a maximal $\calH$--invariant filtration $\emptyset = \calF_{0} \sqsubset \calF_{1} \sqsubset \cdots \sqsubset \calF_{k} = \{[ \FN] \}$.  Assuming that there is an element $\varphi \in \calH$ such that $\varphi\big|_{\calF_{i-1}}$ is atoroidal, we either find a nontrivial element $g \in \FN$ whose conjugacy class is fixed by a finite index subgroup of $\calH$, or in the absence of such an element, we produce an element $\hat\varphi \in \calH$ such that $\hat\varphi\big|_{\calF_{i}}$ is atoroidal.  

There are two cases depending on whether the extension $\calF_{i-1} \sqsubset \calF_{i}$ is multi-edge or single-edge.  In this section we deal with the multi-edge case; the single-edge case takes up Section~\ref{sec:push past one edge}.  

The multi-edge case follows from recent work of Handel--Mosher and Guirardel--Horbez.  We collect these results here and show how they apply to this setting.

\begin{theorem}\label{th:multi edge dichotomy} 
Suppose $\calH < \IA_{N}(\ZZ/3) < \Out(\FN)$.  Let $\calF_{0} \sqsubset \calF_{1}$ be an $\calH$--invariant multi-edge extension, and assume that $\calH$ contains an element which is fully irreducible with respect to the extension $\calF_{0} \sqsubset \calF_{1}$.  Then one of the following holds. 
\begin{enumerate} 
\item $\calH$ contains an element $\psi$ which is fully irreducible and non-geometric relative to $\calF_{0} \sqsubset \calF_{1}$(\cite[Proposition~2.2 and 2.4]{HMpart4}); or 
\item there is a common geometric model for all $\varphi \in \calH$ and hence every element of $\calH$ fixes the conjugacy class corresponding to a boundary curve (\cite[Theorem~J]{HMpart4}). 
\end{enumerate}
\end{theorem}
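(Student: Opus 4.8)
The theorem is a consolidation of results from Handel--Mosher's study of subgroups that are irreducible relative to a multi-edge extension, so the plan is to identify the correct dichotomy in \cite{HMpart4} and check that its hypotheses hold in our setting. The hypotheses we have — $\calH$ preserves the multi-edge extension $\calF_{0} \sqsubset \calF_{1}$ and contains an element fully irreducible with respect to it — are exactly those under which Handel--Mosher establish a geometricity dichotomy: either $\calH$ is geometric relative to the extension, meaning (in the language of Section~\ref{subsec:train-track}) that a single compact surface $S$ serves as a geometric model for every $\varphi \in \calH$, or it is not.

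Suppose first that $\calH$ is not geometric relative to the extension. Then I would invoke \cite[Propositions~2.2 and 2.4]{HMpart4}: in this case the attracting lamination of a fully irreducible element can be arranged to be non-geometric, and $\calH$ contains an element $\psi$ that is fully irreducible and non-geometric relative to $\calF_{0} \sqsubset \calF_{1}$. This is case (1), and nothing further is required.

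Now suppose instead that $\calH$ is geometric relative to the extension. Then \cite[Theorem~J]{HMpart4} supplies the common geometric model: for each $\varphi \in \calH$ there is a relative train track representative $f \from G \to G$ realizing the filtration of $\FN$ in which the EG stratum $H_{r}$ with $\calF(G_{r-1}) = \calF_{0}$ and $\calF(G_{r}) = \calF_{1}$ is geometric with model $S$, where $S$ is attached to $G_{r-1}$ along the boundary components $\alpha_{1}, \ldots, \alpha_{k}$. The remaining boundary component $\alpha_{0}$ of $S$ is carried by $S \cup G_{r-1}$ and so determines a nontrivial conjugacy class $[g]$ in $\FN$; since $\varphi$ preserves the geometric model, $[g]$ is $\varphi$--periodic, hence fixed by $\varphi$ because $\calH < \IA_{N}(\ZZ/3)$ (Section~\ref{subsec:relative}). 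As the surface $S$ is the same for every element of $\calH$, the conjugacy class $[g]$ is fixed by all of $\calH$; this is case (2).

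The step I expect to be the main obstacle is not any single invocation but the translation between the two formalisms: one must confirm that the subgroup-level geometricity used in \cite{HMpart4} matches the definition of a geometric multi-edge extension in Section~\ref{subsec:train-track} (in particular that the surface is attached to $G_{r-1}$ in exactly the prescribed way, with the pseudo-Anosov hypothesis on the associated homeomorphism correctly relaxed at the subgroup level), and that the ``boundary curve'' of \cite[Theorem~J]{HMpart4} is literally the conjugacy class of $\alpha_{0}$ and is independent of the choice of $\varphi \in \calH$.
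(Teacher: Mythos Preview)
Your proposal is correct and matches the paper's treatment: the paper gives no proof of this theorem at all, instead embedding the citations to \cite[Propositions~2.2 and 2.4]{HMpart4} and \cite[Theorem~J]{HMpart4} directly in the statement and following it only with a remark attributing alternative proofs to \cite{UyaNSD} and \cite{un:GH}. Your write-up simply unpacks how those cited results yield the dichotomy, which is exactly the intended reading.
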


When $\calF_{0}=\emptyset$, the above theorem was originally proved by the second author~\cite{UyaNSD}. The general case above is also proved by Guirardel--Horbez 
using the action of the relative outer automorphism group on a $\delta$--hyperbolic complex which is a relative version of Dowdall--Taylor's co-surface graph~\cite{DTcosurface}.  The existence and relevant properties of this complex, which we will also need, is the following.

\begin{theorem}\cite[Theorem 4.2]{un:GH}\label{th:relativecosurface} 
Suppose $\calF \sqsubset \{[\FN]\}$ is a multi-edge extension.  There exist a $\delta$--hyperbolic graph $\zF$ 
with an isometric $\Out(\FN;\calF)$ action so that an element $\varphi\in\Out(\FN;\calF)$ acts as a hyperbolic isometry of $\zF$ if and only if $\varphi$ is fully irreducible and non-geometric relative to $\calF \sqsubset \{[\FN]\}$.  
\end{theorem}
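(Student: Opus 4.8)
The plan is to construct $\zF$ as the $\calF$--relative analog of Dowdall--Taylor's co-surface graph $\coS$, following the relativization carried out by Guirardel--Horbez. Recall that $\coS$ has vertex set the primitive conjugacy classes of $\FN$, with two such classes adjacent when they are simultaneously supported on a common one-holed surface inside $\FN$ (equivalently, when each pairs trivially with a common very small tree dual to such a surface), and that Dowdall--Taylor prove $\coS$ is $\delta$--hyperbolic with a fully irreducible $\varphi$ acting loxodromically precisely when $\varphi$ is non-geometric --- hence, for fully irreducibles, precisely when $\varphi$ is atoroidal \cite{DTcosurface}. To relativize, one replaces each object by its counterpart ``rel $\calF$'': the relative free splitting complex $\fS(\FN;\calF)$ and relative free factor complex $\fF(\FN;\calF)$, the relative outer space $\calO(\FN;\calF)$ of very small $\FN$--trees in which every member of $\calF$ is elliptic, and the space $\Curr(\FN;\calF)$ of currents whose support avoids the attracting laminations carried by $\calF$. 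Then $\zF$ is the cyclic splitting graph of $\FN$ rel $\calF$, with an extra edge joining two vertices whenever they are \emph{co-surface related relative to $\calF$}, i.e. both are carried by a common geometric model relative to $\calF \sqsubset \{[\FN]\}$; the $\Out(\FN;\calF)$--action on the underlying data is simplicial, hence isometric on $\zF$.

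For $\delta$--hyperbolicity the plan is to deduce it from hyperbolicity of the relative free factor / free splitting complex $\fF(\FN;\calF)$ --- known by the relative analogs of the theorems of Handel--Mosher and Guirardel--Horbez \cite{ar:HM13, un:GH} --- by exhibiting $\zF$ as a cone-off of $\fS(\FN;\calF)$ along the subcomplexes supported on geometric models relative to $\calF$, and then running a Masur--Minsky-style ``guessing geodesics'' argument: one pushes forward relative folding paths from $\calO(\FN;\calF)$ to get a family of uniform unparametrized quasi-geodesics in $\zF$, and verifies the thin-triangles criterion by showing, as in Dowdall--Taylor but now relative to $\calF$, that the co-surface projection of a relative folding path is coarsely monotone --- equivalently, that the relative co-surface projections from $\calO(\FN;\calF)$ are coarsely constant along the surface-supported part and coarsely Lipschitz elsewhere.

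To identify the loxodromic set, for the ``if'' direction suppose $\varphi \in \Out(\FN;\calF)$ is fully irreducible and non-geometric relative to $\calF \sqsubset \{[\FN]\}$. Then $\varphi$ has attracting and repelling trees $T_{\pm}$ in $\overline{\calO(\FN;\calF)}$, and these are not dual to any geometric model rel $\calF$ --- otherwise $\varphi$ would fix the conjugacy class of a boundary curve not carried by $\calF$, contradicting non-geometricity --- so $T_{\pm}$ are not co-surface related to any vertex of $\zF$, and the relative version of Dowdall--Taylor's argument shows that the image in $\zF$ of a $\varphi$--axis in $\fS(\FN;\calF)$ is a quasi-geodesic; thus $\varphi$ is loxodromic. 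Conversely, if $\varphi$ acts loxodromically on $\zF$ then it has unbounded orbits in $\fS(\FN;\calF)$ under the coarsely Lipschitz map, and since an element fixing a proper relative free factor system has bounded orbit in $\fF(\FN;\calF)$ and hence in $\zF$, $\varphi$ must be fully irreducible relative to $\calF \sqsubset \{[\FN]\}$; moreover $\varphi$ must be non-geometric rel $\calF$, since a geometric one fixes the conjugacy class of a boundary curve of its geometric model --- a surface-supported vertex of $\zF$ --- giving $\varphi$ a bounded orbit, a contradiction. (When $\calF = \emptyset$ this recovers $\coS$ and the characterization of its loxodromic elements as the atoroidal fully irreducibles.)

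The main obstacle is the hyperbolicity step: it requires the full relative version of Dowdall--Taylor's projection and contraction estimates, and in particular a relative ``surface detection'' statement controlling when a relative folding path spends a long time near a subcomplex supported on a geometric model rel $\calF$ --- this is precisely what makes coning off those subcomplexes preserve hyperbolicity. A secondary subtlety is the ``non-geometric'' half of the loxodromic characterization, which relies on the relative version of the fact that a geometric fully irreducible fixes a boundary-curve conjugacy class, ensuring that one cones off exactly the right subcomplexes. Both of these are carried out in \cite{un:GH}.
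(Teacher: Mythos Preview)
The paper does not actually prove this statement: Theorem~\ref{th:relativecosurface} is quoted verbatim from Guirardel--Horbez~\cite[Theorem~4.2]{un:GH} and used as a black box, with no argument given in the present article. There is therefore no proof here to compare your proposal against.

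That said, what you have written is not a proof but a programme. You outline a plausible construction --- relativize the Dowdall--Taylor co-surface graph, cone off along geometric-model subcomplexes, push forward relative folding paths, and verify the guessing-geodesics criterion --- and you correctly identify the two genuine difficulties (the relative projection/contraction estimates and the relative surface-detection statement). But you then resolve both by appealing to~\cite{un:GH} itself, which is the very reference the theorem is cited from. So your proposal is circular as a stand-alone proof: the content you defer to~\cite{un:GH} \emph{is} the content of the theorem. If the intent was simply to sketch what the cited argument looks like, that is a reasonable expository summary; if the intent was to supply an independent proof, then the ``main obstacle'' paragraph is exactly where the work would have to be done, and it is not done here.
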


As a consequence of Theorem~\ref{th:multi edge dichotomy}, when considering the multi-edge extension $\calF_{i-1} \sqsubset \calF_{i}$ which is part of a maximal $\calH$--invariant filtration, if there does not exist a nontrivial element $g \in \FN$ whose conjugacy class is in $\calF_{i}$ and is fixed by a finite index subgroup of $\calH$, then there is a fully irreducible and non-geometric element $\varphi$ relative to $\calF_{i-1} \sqsubset \calF_{i}$.  Assuming $\varphi\big|_{\calF_{i-1}}$ is atoroidal, so is $\varphi\big|_{\calF_{i}}$ as the next lemma states, allowing us to push past a multi-edge extension.    

\begin{lemma}\label{co:non-geometric atoroidal}
Suppose $\varphi \in \Out(\FN)$ is fully irreducible and non-geometric with respect to the extension $\calF_{0} \sqsubset \calF_{1}$ and the restriction of $\varphi$ to $\calF_{0}$ is atoroidal.  Then the restriction of $\varphi$ to $\calF_{1}$ is atoroidal too.  
\end{lemma}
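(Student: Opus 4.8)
The plan is to argue by contradiction, combining a CT representative of a suitable power of $\varphi$ with the structure of automorphisms that are fully irreducible and non-geometric relative to an extension. Since atoroidality, full irreducibility relative to $\calF_{0}\sqsubset\calF_{1}$, and non-geometricity relative to $\calF_{0}\sqsubset\calF_{1}$ are all invariant under passing to powers, I would first replace $\varphi$ by a suitable power so that (i) $\varphi$ fixes every conjugacy class occurring in $\calF_{0}$ and in $\calF_{1}$, and (ii) $\varphi$ is represented by a CT map $f\from G\to G$ whose filtration $\emptyset=G_{0}\subset\cdots\subset G_{\ell}=G$ realizes $\emptyset\sqsubset\calF_{0}\sqsubset\calF_{1}\sqsubset\{[\FN]\}$ (Theorem~\ref{th:CT exist}). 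The key structural input, which I would quote from the Handel--Mosher/Feighn--Handel theory of relative train tracks (and which is already implicit in the notion of a geometric extension recalled above), is that, because $\varphi$ is fully irreducible relative to $\calF_{0}\sqsubset\calF_{1}$, the CT may be chosen so that a \emph{single} stratum $H_{r}$ realizes the extension: $\calF(G_{r-1})=\calF_{0}$ and $\calF(G_{r})=\calF_{1}$, with $H_{r}$ either an EG stratum or a single NEG edge. Since $\calF_{0}\sqsubset\calF_{1}$ is multi-edge the latter is impossible, for otherwise $(G,G_{r-1},G_{r})$ would exhibit $\calF_{0}\sqsubset\calF_{1}$ as a single-edge extension; thus $H_{r}$ is EG. And since $\varphi$ is non-geometric relative to $\calF_{0}\sqsubset\calF_{1}$, the stratum $H_{r}$ is non-geometric.

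Next I would suppose, for contradiction, that $\varphi\big|_{\calF_{1}}$ is not atoroidal. Then some $\varphi\big|_{A^{i}}$ with $[A^{i}]\in\calF_{1}$ is not atoroidal, so some power of $\varphi\big|_{A^{i}}$ fixes the conjugacy class in $A^{i}$ of a nontrivial element $g\in A^{i}$; in particular some power of $\varphi$ fixes the conjugacy class $[g]$ in $\FN$, and after enlarging the power used above (which disturbs none of the hypotheses, nor the relevant properties of $f$, a Nielsen circuit for a power of $f$ still being one for $f$) we may take $\varphi$ itself to fix $[g]$. Note $[g]$ is carried by $\calF_{1}$. I would first dispose of the subcase that $[g]$ is also carried by $\calF_{0}$: then $g$ is conjugate into some $B^{j}$ with $[B^{j}]\in\calF_{0}$, and choosing $\Phi\in\varphi$ with $\Phi(B^{j})=B^{j}$ and a representative $g\in B^{j}$ we get $\Phi(g)=wgw\inv$ for some $w\in\FN$. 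Since $\Phi(g)$ and $g$ both lie in $B^{j}$ and $g\neq 1$, acting on the Bass--Serre tree of a free splitting of $\FN$ with $B^{j}$ a vertex group (so with trivial edge stabilizers) forces $w\in B^{j}$; hence $\varphi\big|_{B^{j}}$ fixes the nontrivial conjugacy class of $g$ in $B^{j}$, contradicting that $\varphi\big|_{\calF_{0}}$ is atoroidal.

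It therefore remains to treat the case that $[g]$ is carried by $\calF_{1}$ but not by $\calF_{0}$. Let $\sigma$ be the reduced circuit in $G$ representing $[g]$. Because $[g]$ is carried by $\calF_{1}=\calF(G_{r})$ we have $\sigma\subseteq G_{r}$, and because $[g]$ is not carried by $\calF_{0}=\calF(G_{r-1})$ we have $\sigma\not\subseteq G_{r-1}$; hence $\sigma$ crosses $H_{r}$. On the other hand $\varphi[g]=[g]$ translates into $[f_{\#}(\sigma)]=\sigma$, so $\sigma$ is a Nielsen circuit for $f$; basing it at an $f$-periodic vertex on it yields a closed Nielsen path contained in $G_{r}$ that intersects $H_{r}$ nontrivially. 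Since $H_{r}$ is a non-geometric EG stratum, this contradicts Lemma~\ref{CT-properties}\eqref{CT-properties:EG}. The contradiction shows $\varphi\big|_{\calF_{1}}$ is atoroidal.

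The step I expect to be the main obstacle is the structural claim in the first paragraph: that "fully irreducible and non-geometric relative to $\calF_{0}\sqsubset\calF_{1}$'' yields a CT in which the extension is realized by a single non-geometric EG stratum. For a multi-edge extension this is precisely the output of the relative CT machinery of Handel--Mosher, so I would locate and cite the exact statement rather than reprove it. The remainder is routine: elementary manipulation of reduced circuits together with the standard fact that elements of a free factor that are conjugate in $\FN$ are already conjugate within the free factor.
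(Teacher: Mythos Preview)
Your proposal is correct and follows essentially the same approach as the paper: pass to a power represented by a CT realizing the extension, identify the stratum $H_{r}$ as non-geometric EG, and invoke Lemma~\ref{CT-properties}\eqref{CT-properties:EG} to rule out a closed Nielsen path crossing $H_{r}$. The only difference is organizational: the paper applies Lemma~\ref{CT-properties}\eqref{CT-properties:EG} once to conclude that any closed Nielsen path $\rho\subset G_{r}$ must in fact lie in $G_{r-1}$, which immediately contradicts atoroidality of $\varphi\big|_{\calF_{0}}$; your separate treatment of the subcase ``$[g]$ carried by $\calF_{0}$'' (with the Bass--Serre tree argument) is therefore unnecessary, though not wrong.
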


\begin{proof} 
This is a straightforward consequence of Lemma \ref{CT-properties}\eqref{CT-properties:EG}.  Indeed, let $f \from G \to G$ be a CT map that represents $\varphi^{M}$ and realizes $\calC = (\calF_{0},\calF_{1})$, where $M$ is the constant from Theorem~\ref{th:CT exist}.  Assume $M$ is so that $\varphi^{M} \in \IA_{N}(\ZZ/3)$.  Let $H_r$ be the stratum corresponding to the extension $\calF_{0}\sqsubset\calF_{1}$, i.e., $\calF_{0} = \calF(G_{r-1})$, $\calF_{1} = \calF(G_{r})$ and $H_r=\overline{G_r - G_{r-1}}$. 

Any $\varphi$--periodic conjugacy class contained in $\calF_{1}$ is represented by a closed Nielsen path $\rho \subset G_{r}$.  As $H_{r}$ is a non-geometric EG stratum, Lemma~\ref{CT-properties}\eqref{CT-properties:EG} implies that $\rho \subset G_{r-1}$, which contradicts the assumption that $\varphi\big|_{\calF_{0}}$ is atoroidal.
\end{proof} 

Combining the Handel--Mosher Subgroup Decomposition Theorem (Theorem~\ref{th:HMdecomp}) with Theorems~\ref{th:multi edge dichotomy} and \ref{th:relativecosurface}, we get the following corollary which will be required when pushing past single-edge extensions.

\begin{corollary}\label{co:HM-simultaneous}
Suppose $\calH < \IA_{N}(\ZZ/3) < \Out(F_N)$.  Let \[\emptyset = \calF_{0} \sqsubset \calF_{1} \sqsubset \cdots \sqsubset \calF_{k} = \{[ \FN] \}\] be a maximal $\calH$--invariant filtration by free factor systems such that each multi-edge extension is non-geometric.  Then there exists an element $\varphi \in \calH$ such that for each $i = 1,\ldots,k$ where $\calF_{i-1} \sqsubset \calF_{i}$ is a multi-edge extension, $\varphi$ is irreducible and non-geometric with respect to $\calF_{i-1} \sqsubset \calF_{i}$.
\end{corollary}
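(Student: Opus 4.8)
The plan is to combine the Handel--Mosher Subgroup Decomposition Theorem with the relative co-surface graph of Guirardel--Horbez, using a standard ``generic element'' argument along the lines of Remark~\ref{rem:HMdecomp}. First I would apply Theorem~\ref{th:HMdecomp} (together with Remark~\ref{rem:HMdecomp}, i.e.~\cite[Theorem~6.6]{CU}) to obtain a single element $\varphi_{0} \in \calH$ that is irreducible with respect to $\calF_{i-1} \sqsubset \calF_{i}$ for every multi-edge extension in the filtration; since $\calH < \IA_{N}(\ZZ/3)$, such an element is automatically fully irreducible with respect to each of these extensions by the Handel--Mosher periodicity results quoted in Section~\ref{subsec:relative}. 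Thus for each multi-edge index $i$, $\calH$ contains an element which is fully irreducible with respect to $\calF_{i-1} \sqsubset \calF_{i}$, so the hypothesis of Theorem~\ref{th:multi edge dichotomy} is satisfied. By assumption each multi-edge extension is non-geometric, which rules out alternative (2) of Theorem~\ref{th:multi edge dichotomy}; hence for each such $i$ there is an element $\psi_{i} \in \calH$ which is fully irreducible and non-geometric with respect to $\calF_{i-1} \sqsubset \calF_{i}$.

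The point is now to promote this collection $\{\psi_{i}\}$ to a single element $\varphi$ that works simultaneously for all multi-edge $i$. For each such $i$, Theorem~\ref{th:relativecosurface} applied to the extension $\calF_{i-1}' \sqsubset \{[\FN_{i-1}']\}$ obtained by restricting to the free factor system $\calF_{i}$ (that is, passing to the finite collection of free factors comprising $\calF_{i}$ and the induced extension there) gives a $\delta$--hyperbolic graph $\zF_{i}$ on which the relevant relative outer automorphism group acts isometrically, with $\varphi$ acting loxodromically if and only if $\varphi$ is fully irreducible and non-geometric relative to $\calF_{i-1} \sqsubset \calF_{i}$. Since $\calH$ maps to this relative outer automorphism group and contains $\psi_{i}$ acting loxodromically on $\zF_{i}$, the image of $\calH$ in $\Isom(\zF_{i})$ is non-elementary or at least contains a loxodromic. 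One then runs the usual argument: an element of $\calH$ which is loxodromic on each of the finitely many graphs $\zF_{1}, \ldots, \zF_{m}$ simultaneously can be produced, either by a ping-pong/products-of-high-powers construction or by invoking a general statement (as in \cite{CU}) that a group acting on finitely many hyperbolic spaces, each action containing a loxodromic lying in $\calH$, contains a single element loxodromic on all of them. Such a $\varphi$ is then irreducible and non-geometric with respect to $\calF_{i-1} \sqsubset \calF_{i}$ for every multi-edge $i$, which is the conclusion.

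There are two modest technical points to get right. The first is bookkeeping: Theorem~\ref{th:relativecosurface} is stated for an extension of the form $\calF \sqsubset \{[\FN]\}$, so for a general multi-edge extension $\calF_{i-1} \sqsubset \calF_{i}$ sitting inside the filtration one must restrict attention to the free factors making up $\calF_{i}$ and recognize ``fully irreducible and non-geometric relative to $\calF_{i-1} \sqsubset \calF_{i}$'' as the component-wise version of the top-level notion; this is routine given the formalism of Section~\ref{subsec:relative} and the discussion of $\varphi\big|_{A}$. The second, which I expect to be the main obstacle, is the simultaneous-loxodromic step: one needs that a subgroup of $\Out(\FN)$ acting on several hyperbolic graphs, with a loxodromic element available in $\calH$ for each, contains one element that is loxodromic on all of them at once. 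This is exactly the kind of statement proved in \cite{CU} (compare Remark~\ref{rem:HMdecomp}), and the cleanest route is to cite it; absent that, one would argue by induction on the number of graphs using that the set of elements loxodromic on a fixed hyperbolic graph is ``generic'' (e.g.~its complement is a proper subset stable under the relevant dynamics), together with the fact that $\varphi$ irreducible with respect to a given multi-edge extension remains so under replacing $\varphi$ by a suitable power or composing with a high power of another such element. I would write the proof to lean on the \cite{CU} machinery, mirroring Remark~\ref{rem:HMdecomp}, so that the corollary follows in a few lines.
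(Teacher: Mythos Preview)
Your proposal is correct and follows essentially the same route as the paper: use Theorems~\ref{th:HMdecomp} and \ref{th:multi edge dichotomy} together with the non-geometric hypothesis to obtain, for each multi-edge extension, a $\delta$--hyperbolic graph $\zF_{i}$ from Theorem~\ref{th:relativecosurface} and an element of $\calH$ acting loxodromically on it, then invoke the main result of \cite{CU} to produce a single element of $\calH$ loxodromic on all of the $\zF_{i}$ simultaneously, and conclude via Theorem~\ref{th:relativecosurface} again. Your initial step of first obtaining a single $\varphi_{0}$ via Remark~\ref{rem:HMdecomp} is harmless but unnecessary, since Theorem~\ref{th:HMdecomp} alone already supplies the hypothesis of Theorem~\ref{th:multi edge dichotomy} extension-by-extension.
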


\begin{proof}
The proof is the same as the proof of \cite[Theorem~6.6]{CU}, as commented in Remark~\ref{rem:HMdecomp}.  The key point is that Theorems~\ref{th:HMdecomp}, \ref{th:multi edge dichotomy} and \ref{th:relativecosurface} provide for the existence of $\delta$--hyperbolic spaces corresponding to each multi-edge extension and for each an element which acts as a hyperbolic isometry.  The main theorem in~\cite{CU} shows that under these hypotheses, there is a single element in $\calH$ which acts as a hyperbolic isometry in each.  Applying Theorem~\ref{th:relativecosurface} again completes the proof.
\end{proof}


\section{Dynamics on single-edge extensions}\label{sec:single-edge}

In this section we analyze the dynamics of outer automorphisms that preserve a single-edge extension of free factor systems $\calF_{0} \sqsubset \calF_{1}$.  The main result of this section is that in the most interesting case of a handle extension, if $\varphi$ preserves the extension and acts as an atoroidal element on $F_{N-1}$, then $\varphi$ acts on the space of currents on $F_N$ with generalized north-south dynamics (Theorem~\ref{th:gns}). 


\subsection{Almost atoroidal elements}\label{subsec:almost atoroidal}

To begin, we characterize outer automorphisms preserving a single-edge extension $\calF_{0} \sqsubset \calF_{1}$ whose restriction to $\calF_{0}$ is atoroidal.

\begin{proposition}\label{prop:co rank one atoroidal} 
Suppose $\calF_{0} \sqsubset \calF_{1}$ is a single-edge extension of free factor systems that is invariant under $\varphi \in \IA_{N}(\ZZ/3)$.  If $\varphi\big|_{\calF_{0}}$ is atoroidal, then one of the following holds.
\begin{enumerate} 

\item\label{item:one edge atoroidal} The restriction $\varphi\big|_{\calF_{1}}$ is atoroidal. 

\item\label{item:one edge fixed}  There exists a nontrivial $g\in\FN$ such that  $g$, its inverse, and its iterates are the only nontrivial conjugacy classes in $\calF_1$ fixed by $\varphi\big|_{\calF_{1}}$.  Furthermore, there is some $[A] \in \calF_{0}$ such that either:
\begin{itemize}
\item $\calF_{1} = \calF_{0} \cup \{[\I{g}]\}$ (circle extension); or
\item $\calF_{1} = \bigr( \calF_{0} - \{[A]\} \bigl)\cup \{[A \ast \I{g}]\}$ (handle extension).
\end{itemize}
\end{enumerate}
\end{proposition}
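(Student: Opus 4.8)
The plan is to analyze a CT map $f \from G \to G$ representing a power $\varphi^M$ and realizing the nested sequence $\calC = (\calF_0, \calF_1)$, and to extract from the structure of the top stratum $H_r$ (the one with $\calF(G_{r-1}) = \calF_0$ and $\calF(G_r) = \calF_1$) exactly which conjugacy classes in $\calF_1$ can be periodic. Since $\calF_0 \sqsubset \calF_1$ is a single-edge extension, $H_r$ consists of a single topological edge, so by Lemma~\ref{CT-properties}\eqref{CT-properties:NEG} the stratum $H_r$ is NEG (it cannot be EG, since an EG stratum has transition matrix with Perron--Frobenius eigenvalue $>1$, forcing exponential growth and hence more than one edge in general --- but more to the point, a single-edge stratum has a $1\times 1$ transition matrix, which is either $[0]$, $[1]$, or $[m]$ with $m \geq 2$; the $\CT$ structure rules out superlinear behavior for a lone edge that cannot map across itself more than once, and in fact Lemma~\ref{CT-properties}\eqref{CT-properties:NEG} tells us $H_r$ is a single edge $e$ with either $f(e) = e$ or $f(e) = e\gamma$ for $\gamma$ a nontrivial cyclically reduced path in $G_{r-1}$). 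So the first step is: reduce to the NEG single-edge case and split into the three geometric types (circle, barbell, handle).

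Next I would handle each type. In the \textbf{circle extension} case, $G_r = G_{r-1} \sqcup (\text{loop edge } e)$, so $\calF_1 = \calF_0 \cup \{[\I{g}]\}$ where $g$ is the element represented by the loop; since $e$ is disjoint from $G_{r-1}$ it must be a fixed edge ($f(e) = e$, as $\gamma$ would have to lie in the component of $G_{r-1}$ meeting $e$, but there is none), so $[\I{g}]$ is fixed, and any other periodic conjugacy class in $\calF_1$ lies in $\calF_0$, contradicting atoroidality of $\varphi|_{\calF_0}$ unless it is (a power of) $g$ --- this is conclusion \eqref{item:one edge fixed} with the circle alternative. In the \textbf{barbell extension} case, $e$ joins two distinct non-contractible components $C_1, C_2$ of $G_{r-1}$, carrying free factors $A_1, A_2$; here adding a single edge between two components does not change the rank of the fundamental group beyond what is already in $\calF_0$, i.e. $\calF_1$ has the same underlying rank data --- actually $\calF(G_r)$ merges $[A_1]$ and $[A_2]$ into $[A_1 \ast A_2]$ (conjugated appropriately by $\gamma$), but this is a reduction in the \emph{number} of factors, not a rank increase. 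I would argue that in the barbell case, every conjugacy class in $\calF_1$ is already commensurable with one carried by $G_{r-1}$ up to the merging, hence no new periodic classes appear beyond $\calF_0$, so atoroidality of $\varphi|_{\calF_0}$ forces atoroidality of $\varphi|_{\calF_1}$: this is conclusion \eqref{item:one edge atoroidal}. Wait --- I need to be careful: a barbell extension genuinely increases rank (attaching an arc between two components, then the arc together with paths in the components forms a new loop). Let me restructure.

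More carefully: in all three single-edge types the rank goes up by one, and the new conjugacy classes in $\calF_1$ not carried by $\calF_0$ are exactly those whose reduced representative in $G_r$ crosses the edge $e$. By Lemma~\ref{CT-properties}\eqref{CT-properties:NEG}, either $e$ is fixed or $f(e) = e\gamma$. If $e$ is a \emph{fixed} edge: in the circle case $[\I{g}]$ (with $g$ the loop) is fixed and one argues no other new class is periodic, giving \eqref{item:one edge fixed}; in the barbell and handle cases a fixed edge $e$ joining things up produces a \emph{Nielsen path} structure, and I would invoke that a closed path crossing a fixed edge exactly once is itself Nielsen iff it lies in the right coset structure --- the key computation is that $f^k$ of a circuit crossing $e$ once is that circuit only if the portions in $G_{r-1}$ are themselves periodic, which by atoroidality of $\varphi|_{\calF_0}$ pins them to (powers of) a single class, yielding at most one new periodic conjugacy class $[g]$ and its iterates; if $g$ can be taken to lie in a free factor $A \ast \I{g}$ with $[A] \in \calF_0$ we are in the handle alternative of \eqref{item:one edge fixed}, and if the edge is genuinely a circle we are in the circle alternative. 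If instead $f(e) = e\gamma$ with $\gamma$ a \emph{nontrivial} cyclically reduced path in $G_{r-1}$ (superlinear or linear edge): then I claim $\varphi|_{\calF_1}$ is atoroidal, i.e.~\eqref{item:one edge atoroidal}. Any new periodic class is represented by a circuit $\sigma$ crossing $e$, say $k \geq 1$ times; iterating $f$ replaces each occurrence of $e$ by $e\gamma$, and after reducing, the total $G_{r-1}$-length of $[f^n(\sigma)]$ grows without bound (linearly if $e$ is linear with $\gamma$ a Nielsen path whose associated class is nontrivial, or faster if superlinear) unless the growth cancels --- and cancellation of the $\gamma$-contributions would force $\gamma$-powers to be Nielsen paths representing conjugacy classes periodic under $\varphi|_{\calF_0}$, again pinned by atoroidality; the upshot is either unbounded growth (so $\sigma$ is not periodic) or a degenerate situation that reduces to the fixed-edge analysis. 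I'd organize this as: periodic $\iff$ represented by a closed Nielsen path $\rho \subset G_r$; analyze closed Nielsen paths crossing the single NEG edge $e$ using the exceptional/linear path structure from the $\CT$ axioms (\cite[Corollary~4.19, Lemma~4.21]{FH}), and in each branch either bound the number of such Nielsen-path classes by one-up-to-iterates (giving \eqref{item:one edge fixed}) or show there are none beyond $\calF_0$ (giving \eqref{item:one edge atoroidal}).

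The main obstacle I expect is the careful bookkeeping of closed Nielsen paths through a single NEG edge in the $\CT$: specifically, showing that when such a Nielsen path exists it is essentially unique up to inversion and iteration, and identifying the ambient free factor $A \ast \I{g}$ (resp. $\I{g}$) correctly so that the extension is exactly a handle (resp. circle) extension and not something else. This requires invoking the finer $\CT$ structure --- the classification of Nielsen paths crossing linear edges (they are ``exceptional paths'' of the form $e \rho^p \bar e'$) and the fact that for an NEG edge the ``twist path'' $\gamma$ or $\rho$ is a circuit in $G_{r-1}$ whose conjugacy class is $\varphi|_{\calF_0}$-periodic, hence (atoroidality) trivial or fixed --- but $\gamma$ is assumed \emph{nontrivial}, so its class being fixed is impossible unless... here is exactly where atoroidality of $\varphi|_{\calF_0}$ does the work, and pinning down this contradiction cleanly in the linear/superlinear sub-cases is the delicate point. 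I would also need to confirm that passing from $\varphi$ to $\varphi^M$ loses nothing, which is immediate since a conjugacy class is $\varphi$-periodic iff it is $\varphi^M$-periodic, and that $\varphi \in \IA_N(\ZZ/3)$ lets us replace ``periodic'' by ``fixed'' throughout.
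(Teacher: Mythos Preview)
Your approach is essentially the paper's: take a CT realizing $(\calF_0,\calF_1)$, note the top stratum is a single NEG edge $e$ by Lemma~\ref{CT-properties}\eqref{CT-properties:NEG}, and case-split. The paper's organization is a bit cleaner --- first by extension type (circle, barbell, handle), then within the handle case by edge type (linear, fixed, superlinear) --- and for the superlinear sub-case it simply cites \cite[Fact~1.43]{HMpart1} (no Nielsen path crosses a superlinear NEG edge) rather than arguing growth directly. Your identification of the linear case as immediately contradictory (the twist path $\rho$ would represent a periodic class in $\calF_0$) is exactly right, and the ``careful bookkeeping'' you anticipate for the fixed-edge handle case is indeed the heart of the paper's argument, which splits further on whether $o(e)=t(e)$.

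The one genuine gap is the barbell case, which you first misanalyze (the rank does go up by one) and then, after restructuring, lump together with handle in the fixed-edge sub-case. But the outcomes differ: barbell always gives conclusion~\eqref{item:one edge atoroidal}, while handle with a fixed edge can give~\eqref{item:one edge fixed}. The observation you are missing is that in a barbell the edge $e$ \emph{separates} the two components of $G_{r-1}$ it joins, so any cyclically reduced circuit in $G_r$ crossing $e$ must alternate $e, e^{-1}, e, e^{-1},\ldots$, and hence every maximal $G_{r-1}$-subpath is a \emph{closed} loop (based at $o(e)$ or $t(e)$). If the circuit is Nielsen and $e$ is fixed, a power of $f$ permutes and hence fixes each of these loops up to homotopy rel endpoints, giving periodic conjugacy classes in $\calF_0$ --- a contradiction. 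Combined with the linear case (impossible, as you note) and the superlinear case (no Nielsen path crosses $e$), this shows barbell is always atoroidal. In the handle case this separation argument fails, which is exactly why a single fixed class $[g]$ can survive there.
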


\begin{proof}  
Let $f \from G\to G$ be a $\CT$ that represents $\varphi^{M}$ and realizes $\calC = (\calF_{0},\calF_{1})$, where $M$ is the constant from Theorem~\ref{th:CT exist}.  Let $H_r$ be the $\NEG$ stratum corresponding to the extension $\calF_{0}\sqsubset\calF_{1}$, i.e., $\calF_{0} = \calF(G_{r-1})$, $\calF_{1} = \calF(G_{r})$ and $H_r=\overline{G_r - G_{r-1}}$.  By Lemma~\ref{CT-properties}\eqref{CT-properties:NEG}, $H_{r}$ consists of a single edge $e$.  

If $\calF_0 \sqsubset \calF_1$ is a circle extension, then the second statement of the proposition holds.  Else, if $\calF_0 \sqsubset \calF_1$ is a barbell extension, then $\varphi\big|_{\calF_1}$ is atoroidal and so the first statement of the proposition holds.  Hence we assume that $\calF_0 \sqsubset \calF_1$ is a handle extension.  Let $[A] \in \calF_0$ correspond to the component of $G_{r-1}$ upon which $e$ is attached.

First, suppose that $e$ is a linear edge, i.e., $f(e) = e \rho$ where $\rho$ is a nontrivial closed Nielsen path in $G_{r-1}$.  Then the conjugacy class corresponding to $\rho$ is fixed by $\varphi$ and is in $\calF_{0}$, contradicting the assumption $\varphi\big|_{\calF_{0}}$ is atoroidal.  Hence this case does not occur.

Next, suppose that $e$ is a fixed edge.  If $o(e) = t(e)$, we claim that the conjugacy class $g$ that corresponds to the loop $e$ is the only fixed conjugacy class up to inversion and taking powers.  Thus the second statement of the proposition holds.  Indeed, any other conjugacy class $[h]$ in $\calF_{1}$ is represented by a cyclically reduced loop of the form $e^{a_{1}}\alpha_{1}e^{a_{2}} \ldots \alpha_{k}$ where the $\alpha_{i}$'s are reduced loops in $G_{r-1}$ based at the common vertex $o(e)=t(e)$ and the $a_i$'s are non-zero integers.  If $\varphi^{Mp}[h]=[h]$ for some $p \geq 1$, then $[f^{p}(e^{a_{1}}\alpha_{1}e^{a_{2}} \ldots \alpha_{k})] = \sigma e^{a_{1}}\alpha_{1}e^{a_{2}} \ldots \alpha_{k} \sigma^{-1}$ for some reduced edge path $\sigma$ (note, the image path is reduced except possibly at $\sigma \cdot e^{a_{1}}$ or $\alpha_{k} \cdot \sigma\inv$).  Since $f(e)=e$ and $f$ preserves $G_{r-1}$,  $f^{p}$ must permute the $\alpha_i$'s (up to homotopy rel endpoints).  Hence some power of $f$ fixes each $\alpha_{i}$ which is a contradiction as the restriction of $\varphi$ to $\calF_{0}$ is atoroidal.   
  
If $o(e) \neq t(e)$, we claim that there can be at most one fixed conjugacy class in $\calF_{1}$ up to inversion and taking powers.  Thus the second statement of the proposition holds.  Indeed, suppose $h_{1}, h_{2} \in \FN$ are not proper powers, $[h_{1}]$ and $[h_{2}]$ are in $\calF_{1}$, and are fixed by $\varphi$.  As the restriction of $\varphi$ to $\calF_{0}$ is atoroidal, we have that $[h_{1}]$ is represented by a cyclically reduced loop $e^{a_{1}}\alpha_{1}e^{a_{2}} \ldots \alpha_{k}$ where the $\alpha_{i}$'s are reduced paths in $G_{r-1}$ and each $a_{i} \in \{-1,1\}$.  Similarly, $[h_{2}]$ is represented by a cyclically reduced loop $e^{b_{1}}\beta_{1}e^{b_{2}} \ldots \beta_{\ell}$ where again the $\beta_{i}$'s are reduced paths in $G_{r-1}$ and each $b_{i} \in \{-1,1\}$.  As in the previous case of a loop, some power of $f$ fixes each $\alpha_{i}$ and $\beta_{i}$ (up to homotopy rel endpoints).  If there is some $i$ such that $a_{i} \neq a_{i+1}$, then the path $\alpha_{i}$ is closed and represents a conjugacy class in $\calF_{0}$ which is $\varphi$--periodic, contradicting the assumption that the restriction of $\varphi$ to $\calF_{0}$ is atoroidal.  Similarly for the $b_{i}$'s.  Thus, after possibly replacing $h_{1}$ or $h_{2}$ by their inverse, we have that each $a_{i}$ and $b_{i}$ equals $1$.  If there exist $i \neq j$ such that $\alpha_{i} \neq \alpha_{j}$, then the nontrivial closed loop $\alpha_{i}\alpha_{j}^{-1}$ is fixed by this power of $f$ and contained in $G_{r-1}$, again contradicting the assumption that the restriction of $\varphi$ to $\calF_{0}$ is atoroidal.  Thus the $\alpha_{i}$'s are all the same path $\alpha$ and since $h_{1}$ is not a proper power, we have that $[h_{1}]$ is represented by the cyclically reduced path $e\alpha$.  Similarly $[h_{2}]$ is represented by the cyclically reduced path $e\beta$.  Finally, if $\alpha \neq \beta$, then the nontrivial closed loop $\alpha\beta^{-1}$ is fixed by a power of $f$, again contradicting the assumption that the restriction of $\varphi$ to $\calF_{0}$ is atoroidal.  Hence $[h_{1}] = [h_{2}]$.     

Lastly, in the remaining case that $e$ is superlinear, there is no Nielsen path that crosses $e$ \cite[Fact~1.43]{HMpart1}, hence the restriction of $\varphi$ to $\calF_{1}$ is atoroidal as well.  Thus the first statement of the proposition holds. 

In all cases, we see that $\varphi$ has at most one fixed conjugacy class up to taking powers and inversion which proves the first part of the theorem.  The last assertion for the second statement follows from the fact that the path representing a possible fixed $g$ crosses the edge $e$ exactly once, see for example \cite[Corollary~3.2.2]{BFH00}. 
\end{proof}


\subsection{North-south dynamics for atoroidal elements}\label{subsec:ns-atoroidal}
 
The second author recently proved that atoroidal elements of $\Out(\FN)$ act on $\PCurr(\FN)$ with north-south dynamics in the following sense.
 
\begin{theorem}[{\cite[Theorem~1.4]{U3}}]\label{dynamicsofhyp} 
Let $\varphi\in \Out(\FN)$ be an atoroidal outer automorphism of a free group of rank $N\ge3$. There are simplices $\Delta_{+}$, $\Delta_{-}$ in $\PCurr(\FN)$ such that $\varphi$ acts on $\PCurr(\FN)$ with north-south dynamics from $\Delta_{-}$ to $\Delta_{+}$.  Specifically, given open neighborhoods $U$ of $\Delta_{+}$ and $V$ of $\Delta_{-}$ there exists $M > 0$ such that $\varphi^{n}(\PCurr(\FN) - V)\subset U$, and $\varphi^{-n}(\PCurr(\FN) - U)\subset V$ for all $n\ge M$. 
\end{theorem}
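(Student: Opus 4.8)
The plan is to reduce to a train--track picture and then run a legality--based attraction argument on $\PCurr(\FN)$, in the spirit of the author's treatment of fully irreducible elements \cite[Theorem~B]{UyaNSD}. First I would replace $\varphi$ by a positive power lying in $\IA_N(\ZZ/3)$ and, by Theorem~\ref{th:CT exist}, represented by a CT map $f\from G\to G$ with filtration $\emptyset=G_0\subset\cdots\subset G_\ell=G$; north--south dynamics for a power upgrades to $\varphi$ itself, because the candidate simplices depend only on the (power-invariant) attracting lamination of $\varphi$ and hence are $\varphi$--invariant, so a routine compactness argument over the finitely many intermediate iterates recovers the statement for $\varphi$. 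Next I would record what atoroidality buys us about $f$: a geometric EG stratum would make $\varphi$ restrict to a pseudo-Anosov on a subsurface whose boundary curves give $\varphi$--periodic conjugacy classes, so every EG stratum is non-geometric and, by Lemma~\ref{CT-properties}\eqref{CT-properties:EG}, no EG stratum carries a closed Nielsen path; and by Lemma~\ref{CT-properties}\eqref{CT-properties:NEG} each NEG stratum is a single edge that is neither a fixed loop nor linear (either would fix a conjugacy class), hence fixed non-loop or superlinear. A descent through the filtration then shows that the iterates of a superlinear edge eventually cross EG strata, so the growth of $f$ is governed entirely by its EG strata; in particular $f$ has at least one of them.

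Next I would set up the two simplices. Let $\Lambda^+$ be the attracting lamination of $\varphi$ --- a finite union of minimal laminations, one associated to each EG stratum of $f$ --- and put $\Delta_+=\{[\mu]\in\PCurr(\FN)\colon\thinspace\supp\mu\subseteq\Lambda^+\}$, with $\Delta_-$ defined symmetrically from $\varphi\inv$. I expect each to be a compact $\varphi$--invariant, finite--dimensional simplex: its extreme points are the currents carried by the individual components of $\Lambda^\pm$, and an occurrence/transverse-measure argument --- in which non-geometricity of the EG strata is exactly what prevents linear relations among components arising from a shared boundary curve --- identifies the currents supported on $\Lambda^+$ with the unique convex combinations of these. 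This mirrors the simplices in \cite[Theorem~B]{UyaNSD}, except that atoroidality should force $\Delta_+\cap\Delta_-=\emptyset$ rather than their meeting in a fixed counting current.

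The core of the proof is the attraction estimate, whose engine is a legality argument at the top EG stratum $H_r$. For a reduced edge path $\gamma$ in $G$ I would track the total length of its maximal $r$--legal subpaths crossing $H_r$; applying $f$ multiplies this quantity by at least $\lambda_r>1$, up to a bounded-cancellation error at illegal turns and endpoints, while illegal turns in $H_r$ are consumed at a definite rate unless $\gamma$ is carried, at the level of $H_r$, by $G_{r-1}$ together with the finitely many taken connecting paths. Passing to currents through the occurrence functions $\mu\mapsto\I{\gamma,\mu}$ and Lemma~\ref{lem:topology}, this yields: if $\mu$ gives positive mass to some $r$--legal segment crossing $H_r$, then $\varphi^n\mu/\wght{\varphi^n\mu}$ converges into $\Delta_+$, uniformly over any family of $[\mu]$ bounded away from the currents lacking such content. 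To finish I would run a weak-attraction dichotomy down the filtration: a current $[\mu]$ outside a given neighborhood $V$ of $\Delta_-$ must, after finitely many applications of $\varphi$, feed positive mass onto an $r$--legal segment crossing $H_r$, since otherwise the absence of closed Nielsen paths through the EG strata and the fixed/superlinear alternative at the NEG strata force $\mu$ to be supported on the repelling lamination $\Lambda^-$, i.e.\ $[\mu]\in\Delta_-\subseteq V$. Combining, $\varphi^n(\PCurr(\FN)-V)\subseteq U$ for all large $n$, and the same reasoning applied to $\varphi\inv$ gives $\varphi^{-n}(\PCurr(\FN)-U)\subseteq V$.

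The hard part will be the weak-attraction dichotomy in the last step --- certifying that a current bounded away from $\Delta_-$ genuinely acquires legal top-stratum content rather than lingering in intermediate, carried configurations --- and this is precisely where atoroidality is indispensable: it removes the Nielsen-path phenomena (closed Nielsen paths through EG strata, linear NEG edges) that in the non-atoroidal case create $\varphi$--invariant currents outside $\Delta_+\cup\Delta_-$, and it is what lets the downward induction on the filtration close. A secondary technical point requiring care is the simplex structure of $\Delta_\pm$, namely that the natural parametrization of the currents carried by $\Lambda^\pm$ by a standard simplex is a homeomorphism.
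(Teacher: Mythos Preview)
The paper does not prove this theorem: it is quoted verbatim from \cite[Theorem~1.4]{U3} and used as a black box. So there is no ``paper's own proof'' to compare against here.

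That said, one can infer from the paper's later use of \cite{U3} what the actual argument looks like, and it differs from your sketch in one structural respect. You organize the attraction around \emph{legality at the top EG stratum $H_r$} and then propose a downward induction through the filtration to handle currents supported below $H_r$. The approach in \cite{U3}, by contrast, works with a single global quantity---the \emph{completely split goodness} that the present paper adapts in Section~\ref{subsec:goodness} (Definitions~\ref{def:goodness of paths} and~\ref{def:goodness of current}, Lemma~\ref{generalizedgoodnessproperties})---which measures how much of a path is covered by splitting units across \emph{all} strata simultaneously. The lemmas the paper cites from \cite{U3} (e.g.\ \cite[Lemmas~3.14, 3.16, 3.17, 3.19]{U3}) show this quantity grows monotonically under iteration and eventually exceeds any threshold, uniformly, for at least one of $f_+$ or $f_-$; this is what drives the back-and-forth in Lemma~\ref{lem:backandforthlowergoodness}. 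The extremal currents in $\Delta_\pm$ are likewise constructed as limits of iterated edges in EG strata (\cite[Proposition~3.3, Definition~3.5]{U3}), not via a support characterization.

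Your top-stratum approach is not wrong in spirit, but the step ``a current outside $V$ must, after finitely many applications of $\varphi$, feed positive mass onto an $r$--legal segment crossing $H_r$'' is false as stated: a current supported in $G_{r-1}$ stays there forever under $f$. You acknowledge this and invoke a downward induction, but that induction is exactly the hard part, and packaging it as a single goodness function is what makes the argument in \cite{U3} go through cleanly and uniformly. If you want to carry out your version, you would need to prove that the attraction into the sub-simplex of $\Delta_+$ associated to each EG stratum is uniform over currents that first interact with that stratum, and then glue these estimates---this is doable but more delicate than the completely-split-goodness route.
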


We also need the following statement regarding the behavior of the length of a current under iteration of $\varphi$.  In this statement, we assume $\varphi \in \Out(\FN)$ satisfies the hypotheses of Theorem~\ref{dynamicsofhyp} and $\Delta_{-}$ is the $\varphi$--invariant simplex in $\PCurr(\FN)$ appearing in the statement of that theorem.  

\begin{lemma}[{cf. \cite[Corollary~4.13]{KL5}}]\label{lem:dynamics in simplex}
For each $C > 0$ and neighborhood $V$ of $\Delta_{-}$ there is a constant $M > 0$ such that if $[\mu] \notin V$, then $\wght{\varphi^{n}\mu} \geq C\wght{\mu}$ for all $n \geq M$.
\end{lemma}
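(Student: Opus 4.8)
The plan is to pass to the compact cross-section $\Sigma = \{\mu \in \Curr(\FN) : \wght{\mu} = 1\}$, which is homeomorphic to $\PCurr(\FN)$ via $\mu \mapsto [\mu]$, and to combine the projective north--south dynamics of Theorem~\ref{dynamicsofhyp} with the fact that the simplicial length is strictly expanded near the attracting simplex $\Delta_{+}$. Since $\varphi$ and $\wght{\cdot}$ are both linear on $\Curr(\FN)$, one has $\wght{\varphi^{n}\mu} = \wght{\mu}\,\wght{\varphi^{n}\bar\mu}$ with $\bar\mu = \mu/\wght{\mu} \in \Sigma$, and $[\bar\mu] = [\mu] \notin V$; so it suffices to produce $M$ with $\wght{\varphi^{n}\bar\nu} \geq C$ for every $\bar\nu$ in the compact set $K := \{\bar\nu \in \Sigma : [\bar\nu] \notin V\}$ (we may take $V$ open, so $K$ is compact) and every $n \geq M$.

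I would use two preliminary ingredients. First, a \emph{bounded distortion} estimate: for any $\psi \in \Out(\FN)$ there is a constant $L(\psi) \geq 1$ with $\wght{\psi\mu} \leq L(\psi)\wght{\mu}$ for all $\mu$, obtained by expressing $\I{e, \psi\mu}$ as a nonnegative combination of the $\I{e', \mu}$ with coefficients read off from a topological representative of $\psi$. Writing $\Lambda = \max\{L(\varphi), L(\varphi\inv)\}$ and using $\wght{\mu} = \wght{\varphi\inv\varphi\mu} \leq \Lambda\wght{\varphi\mu}$ gives $\Lambda^{-|k|}\wght{\mu} \leq \wght{\varphi^{k}\mu} \leq \Lambda^{|k|}\wght{\mu}$ for all $k \in \ZZ$. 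Second, and this is the key point, that $\varphi$ uniformly expands length near $\Delta_{+}$: by the structure of $\Delta_{+}$ established in~\cite{U3}, $\Delta_{+}$ is the projectivized convex hull of finitely many currents $\mu_{1}, \ldots, \mu_{m}$ carried by the attracting laminations of $\varphi$, and a suitable power $\varphi^{q}$ fixes each $[\mu_{i}]$, so that $\varphi^{q}\mu_{i} = \lambda_{i}\mu_{i}$ with $\lambda_{i} > 1$ the corresponding expansion factor. Any $\nu$ with $[\nu] \in \Delta_{+}$ is a nonnegative combination $\sum_{i} t_{i}\mu_{i}$, so by linearity $\wght{\varphi^{q}\nu} = \sum_{i} t_{i}\lambda_{i}\wght{\mu_{i}} \geq \lambda\wght{\nu}$ with $\lambda := \min_{i}\lambda_{i} > 1$. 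Since $[\nu] \mapsto \log\bigl(\wght{\varphi^{q}\nu}/\wght{\nu}\bigr)$ is continuous on $\PCurr(\FN)$ and $\geq \log\lambda > 0$ on the compact set $\Delta_{+}$, there is an open neighborhood $U$ of $\Delta_{+}$ and a number $\lambda' \in (1, \lambda)$ with $\wght{\varphi^{q}\nu} \geq \lambda'\wght{\nu}$ whenever $[\nu] \in U$.

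To finish, apply Theorem~\ref{dynamicsofhyp} to the neighborhoods $U$ of $\Delta_{+}$ and $V$ of $\Delta_{-}$: there is $M_{0}$ such that $\varphi^{m}[\bar\nu] \in U$ for all $[\bar\nu] \notin V$ and all $m \geq M_{0}$. Fix $\bar\nu \in K$ and $n \geq M_{0}$, and write $n - M_{0} = qt + r$ with $0 \leq r < q$. Since $\varphi^{M_{0}+qs}[\bar\nu] \in U$ for every $s \geq 0$, telescoping the ratios $\wght{\varphi^{q}(\varphi^{M_0+qs}\bar\nu)}/\wght{\varphi^{M_0+qs}\bar\nu} \geq \lambda'$ and applying bounded distortion to the first $M_{0}$ and last $r < q$ iterates gives
\begin{equation*}
\wght{\varphi^{n}\bar\nu} \;\geq\; \Lambda^{-r}\,\wght{\varphi^{M_{0}+qt}\bar\nu} \;\geq\; \Lambda^{-q}\,(\lambda')^{t}\,\wght{\varphi^{M_{0}}\bar\nu} \;\geq\; \Lambda^{-M_{0}-q}\,(\lambda')^{\lfloor (n-M_{0})/q\rfloor}.
\end{equation*}
The right-hand side tends to $\infty$ as $n \to \infty$, uniformly in $\bar\nu \in K$, so any $M \geq M_{0}$ with $\Lambda^{-M_{0}-q}(\lambda')^{\lfloor (n-M_{0})/q\rfloor} \geq C$ for all $n \geq M$ works, whence $\wght{\varphi^{n}\mu} = \wght{\mu}\wght{\varphi^{n}\bar\mu} \geq C\wght{\mu}$ for $[\mu]\notin V$.

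I expect the main obstacle to be the second ingredient: the north--south dynamics of Theorem~\ref{dynamicsofhyp} controls only projective classes and says nothing about magnitudes, so one must genuinely invoke that the extremal currents of $\Delta_{+}$ are (projectively periodic) currents carried by attracting laminations, each strictly expanded by its expansion factor. Granting that, linearity of $\wght{\cdot}$ propagates strict expansion across the whole cone over $\Delta_{+}$, compactness upgrades it to a neighborhood $U$, and the remainder is the Birkhoff-type telescoping above; this is precisely the pattern of \cite[Corollary~4.13]{KL5} with the single attracting ray replaced by the attracting simplex.
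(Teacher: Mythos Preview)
Your proposal is correct and takes essentially the same approach as the paper, which defers to the proof of Lemma~\ref{lem:growth outside of nbhd}: strict expansion on the extremal currents of $\Delta_{+}$ by a power $\varphi^{q}$, linearity and continuity to propagate this to a neighborhood $U$, north--south dynamics (Theorem~\ref{dynamicsofhyp}) to land in $U$ after $M_{0}$ steps, and a telescoping estimate. Your explicit bounded-distortion constant $\Lambda$ plays the role of the paper's constants $K$ and $L$ obtained from compactness, and you avoid the step of shrinking $U$ to be $\varphi$--invariant by using directly that $\varphi^{m}[\bar\nu]\in U$ for all $m\ge M_{0}$.
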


A similar statement appears as Lemma~\ref{lem:growth outside of nbhd}.  The proof given there directly adapts to prove this statement.

\subsection{Completely split goodness of paths and currents}\label{subsec:goodness}

To deal with single-edge extensions, we need similar statements for an element of $\Out(\FN)$ that restricts to an atoroidal element on a \emph{co-rank $1$ free factor} of $\FN$, i.e., a free factor $A < \FN$ for which there exists a nontrivial $g \in \FN$ such that $\FN = A \ast \I{g}$.  This is the purpose of this subsection and the next where we describe the necessary tools to prove Theorem~\ref{th:gns}.  The majority of the work in the next two section modifies the constructions and argument in \cite{U3} to deal with the free factor $\I{g}$.  A casual reader can review the main statements corresponding to the two above, Theorem~\ref{th:gns} and Lemma~\ref{lem:growth outside of nbhd}, and skip ahead to Section~\ref{sec:push past one edge}.

\begin{assumption}\label{stand}
Suppose $A < \FN$ is a co-rank $1$ free factor and $\varphi \in \IA_{N}(\ZZ/3) \cap \Out(\FN;A)$ is such that $\varphi\big|_{A}$ is atoroidal.  Let $\Delta_{+}$ and $\Delta_{-}$ be the inclusion to $\PCurr(\FN)$ of the $\varphi$--invariant simplices in $\PCurr(A)$ from Theorem~\ref{dynamicsofhyp} for $\varphi\big|_{A}$.  Assume $\varphi$ is not atoroidal and let $[g]$ be the fixed conjugacy class in $\FN$ given by Proposition~\ref{prop:co rank one atoroidal}\eqref{item:one edge fixed}.  Let
\begin{equation*}
\hDelta_{-}=\{[t\eta_{g}+(1-t)\mu_{-}]\mid [\mu_{-}]\in\Delta_{-}, t\in[0,1]\}
\end{equation*}
and 
\begin{equation*}
\hDelta_{+}=\{[t\eta_{g}+(1-t)\mu_{+}]\mid [\mu_{+}]\in\Delta_{+}, t\in[0,1]\}.
\end{equation*}
\end{assumption}

Throughout the rest of this section and the next, we will further assume the element $\varphi$ is represented by a CT map $f \from G \to G$ in which the fixed conjugacy class $[g]$ is represented by a loop edge $e$ in $G$ which is fixed by $f$.  The complement of the edge $e$ in $G$ is denoted $G'$.  This assumption is not a restriction (upon replacing $\varphi$ by a sufficient power to ensure some CT).  Indeed, if in the proof of Proposition \ref{prop:co rank one atoroidal} the edge $e$ is a loop edge we are done.  Otherwise, the conclusion of Proposition \ref{prop:co rank one atoroidal} says that $[g]$ is a free factor so we can take a CT map $f' \from G'\to G'$ that represents $\varphi\big|_{A}$ and let $G = G' \vee e$ where the wedge point is at an $f'$-fixed vertex and $e$ is a loop edge representing $[g]$.  There is an obvious extension to a map $f \from G \to G$ representing $\varphi \in \Out(\FN)$ that is a CT map.  Existence of a fixed vertex is guaranteed by the properties of CT's, see \cite[Definition 3.18 and Lemma 3.19]{FH}.

A decomposition of a path $\gamma$ in $G$ into subpaths $\gamma = \gamma_1\cdot\gamma_2\cdot \ldots \cdot \gamma_n$ is called a \emph{splitting} if for all $k \geq 0$ we have 
\begin{equation*}
[f^{k}(\gamma)]=[f^{k}(\gamma_1)][f^{k}(\gamma_2)]\ldots[f^{k}(\gamma_n)].
\end{equation*}
In other words, any cancellation takes place within the images of the $\gamma_{i}$'s.  We use the ``$\cdot$'' notation for splittings.  A path $\gamma$ is said to be \emph{completely split} if it has a splitting $\gamma_1 \cdot \gamma_2 \cdot \ldots \cdot \gamma_{n}$ where each $\gamma_{i}$ is either an edge in an irreducible stratum, an indivisible Nielsen path or a maximal taken connecting path in a zero stratum.  These type of subpaths are called \emph{splitting units}.  We refer reader to \cite{FH} for complete details and note that the assumption on $\varphi$ above guarantees that there are no exceptional paths.  Of importance is that if $\gamma = \gamma_1\cdot\gamma_2\cdot\ldots\cdot\gamma_n$ is a complete splitting, then $[f(\gamma)]$ also has a complete splitting where the units refine $[f(\gamma)]=[f(\gamma_1)]\cdot [f(\gamma_2)] \cdot  \ldots  \cdot [f(\gamma_n)]$~\cite[Lemma~4.6]{FH}.  We say that a splitting unit $\sigma$ is \emph{expanding} if $|[f^{k}(\sigma)]|\to\infty$ as $k\to\infty$.  Recall $|\param|$ denotes the simplicial length of a path.

We next need to introduce a notion of \emph{goodness} tailored to the setting of CT maps.  Goodness appears in several places in the literature~\cite{BFH97,Martin,Uyaiwip}.  Intuitively, the closer the goodness of a path is to $1$, the more we understand the qualitative behavior of its forward images.  In the previous settings, it is defined using legal subpaths, in the current setting, completely split subpaths are the relevant piece to keep track of. 

\begin{definition}\label{def:goodness of paths}
For an edge path $\gamma$ in $G$, a \emph{maximal splitting} is a splitting $\gamma = \beta_{0} \cdot \alpha_{1} \cdot \beta_{1} \cdot \ldots \cdot \alpha_{n} \cdot \beta_{n}$ where each $\alpha_{i}$ has a complete splitting, $\beta_{i}$ is nontrivial for $i = 1,\ldots,n-1$ and $\sum_{i=1}^{n} |\alpha_{i}|$ is maximized.  Using a maximal splitting, we define the \emph{completely split goodness} of $\gamma$ as:
\begin{equation*}
\frakg(\gamma) = \frac{1}{|\gamma|} \sum_{i=1}^{n} |\alpha_{i}|.
\end{equation*}
\end{definition}

If $\gamma$ is a cyclically reduced circuit in $G$, set $\frakg(\gamma)$ to be the maximum of $\frakg(\gamma')$ over all cyclic permutations of $\gamma$.  For any conjugacy class $h \in \FN$, let $\gamma_{h}$ be the unique cyclically reduced circuit in $G$ that represents $[h]$.  We define the \emph{completely split goodness} of a conjugacy class $[h]$ as $\frakg([h]) = \frakg(\gamma_{h})$.  It is not clear that $\frakg$ can extend in a continuous way to $\Curr(\FN)$.  What we can do is to define a continuous function $\overline{\frakg} \from \Curr(\FN) \to \RR$ that agrees with $\frakg$ on completely split circuits and provides a lower bound on $\frakg$ in general.  The first ingredient is the bounded cancellation lemma.

\begin{lemma}\cite{Coo}\label{BCL} Let $f \from G \to G$ be a graph map. There exists a constant $C_{f}$ such that for any reduced path $\gamma=\gamma_{1}\gamma_{2}$ in $G$ one has
\[
|[f(\gamma)]|\ge|[f(\gamma_1)]|+|[f(\gamma_2)]|-2C_{f}.
\]
\end{lemma}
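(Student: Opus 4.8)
The plan is to prove this classical bounded cancellation lemma of Cooper by passing to the universal cover and invoking stability of quasigeodesics in the tree $\widetilde{G}$. First I would reduce the inequality to a statement about cancellation: if $\gamma = \gamma_{1}\gamma_{2}$ is reduced, then forming the reduced representative $[f(\gamma)]$ from the concatenation $[f(\gamma_{1})][f(\gamma_{2})]$ cancels exactly $c$ edges off the terminal segment of $[f(\gamma_{1})]$ against the initial segment of $[f(\gamma_{2})]$, so that $|[f(\gamma)]| = |[f(\gamma_{1})]| + |[f(\gamma_{2})]| - 2c$. Thus it suffices to produce a constant $C_{f}$, depending only on $f$, with $c \leq C_{f}$ for every reduced path $\gamma$ and every decomposition $\gamma = \gamma_{1}\gamma_{2}$.

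Next I would lift to universal covers. Fix a lift $\widetilde f \from \widetilde{G} \to \widetilde{G}$ of $f$. Since $f$ is a homotopy equivalence of finite graphs, it has a homotopy inverse $h \from G \to G$ together with a homotopy from $h \circ f$ to the identity; compactness of $G$ bounds all of this data uniformly, so $\widetilde f$ is a $(\lambda,\lambda)$--quasi-isometry of $\widetilde{G}$ with $\lambda$ depending only on $f$. (Equivalently, $\widetilde f$ is Lipschitz with constant $\max_{e}|f(e)|$ and coarsely injective via $\widetilde h$.) Then I would interpret the cancellation geometrically: choosing a lift $\widetilde\gamma = \widetilde\gamma_{1}\widetilde\gamma_{2}$ of $\gamma$ with endpoints $\widetilde a, \widetilde b$ and concatenation vertex $\widetilde v$, the reduced path $[f(\gamma)]$ lifts to the geodesic $[\widetilde f(\widetilde a), \widetilde f(\widetilde b)]$ in the tree $\widetilde{G}$, which passes through the median $m$ of $\widetilde f(\widetilde a), \widetilde f(\widetilde v), \widetilde f(\widetilde b)$, and the cancellation is exactly $c = d_{\widetilde{G}}(\widetilde f(\widetilde v), m) = d_{\widetilde{G}}\bigl(\widetilde f(\widetilde v),\, [\widetilde f(\widetilde a), \widetilde f(\widetilde b)]\bigr)$.

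To finish, I would apply quasigeodesic stability: since $\widetilde f$ is a $(\lambda,\lambda)$--quasi-isometry and $\widetilde\gamma$ is a geodesic, the image $\widetilde f(\widetilde\gamma)$ is a $(\lambda,\lambda)$--quasigeodesic in the $0$--hyperbolic space $\widetilde{G}$, hence lies within Hausdorff distance $D = D(\lambda)$ of the geodesic $[\widetilde f(\widetilde a), \widetilde f(\widetilde b)]$ joining its endpoints. As $\widetilde f(\widetilde v)$ lies on $\widetilde f(\widetilde\gamma)$, this yields $c \leq D$, and setting $C_{f} = \lceil D \rceil$ completes the argument.

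Being a well-known theorem of Cooper, there is no deep obstacle here; the only point requiring care is the bookkeeping that the quasi-isometry constants, and hence $D$ and $C_{f}$, depend on $f$ alone and not on the path $\gamma$. This is exactly where finiteness of $G$ enters, both to bound $\max_{e}|f(e)|$ and to bound the tracks of a homotopy from $h \circ f$ to the identity; alternatively one can avoid the quasigeodesic machinery entirely by a direct argument, pushing a would-be long cancellation in $[f(\gamma)]$ through $\widetilde h$ and the homotopy to produce a backtrack in $\widetilde\gamma$ of length larger than the (uniformly bounded) width of that homotopy, contradicting that $\gamma$ is reduced.
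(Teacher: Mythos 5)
The paper does not prove this lemma; it is quoted from Cooper \cite{Coo}, so there is no in-paper argument to compare against. Your proof is correct and is one of the standard modern proofs of bounded cancellation. The reduction of the inequality to bounding the cancellation $c$ (using $|[f(\gamma)]| = |[f(\gamma_1)]| + |[f(\gamma_2)]| - 2c$), the identification of $c$ with the distance in the tree $\widetilde{G}$ from $\widetilde f(\widetilde v)$ to the geodesic $[\widetilde f(\widetilde a), \widetilde f(\widetilde b)]$ via the median, and the appeal to stability of $(\lambda,\lambda)$--quasigeodesics in a $0$--hyperbolic space are all sound. You also correctly isolate the one point that actually needs care: the \emph{lower} quasi-isometry bound, which comes from the homotopy inverse $h$ and the compactness of $G$ (bounding the tracks of the homotopy $h\circ f \simeq \mathrm{id}$), is what prevents $\widetilde f(\widetilde\gamma)$ from backtracking arbitrarily far; mere Lipschitzness would not suffice. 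Cooper's original argument is more combinatorial, but the quasigeodesic-stability route you take is the one most commonly quoted in the $\Out(\FN)$ literature, and your alternative sketch at the end (pushing a long cancellation through $\widetilde h$ to contradict reducedness of $\gamma$) is also a viable direct proof.
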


Let $C_0$ be the maximum length of a Nielsen path or a taken connecting path in a zero stratum in $G'$.  Finiteness of $C_{0}$ follows as $\varphi\big|_{A}$ is atoroidal and zero strata are contractible.  This same $C_0$ also works for $f^{k}$ for all  $k \geq 1$.  We now replace the CT map $f$ with a suitable power, but continue to use $f$, so that for each expanding splitting unit $\sigma$, we have $\abs{[f(\sigma)]} \geq 3(2C_0+1)\abs{\sigma}$.  Let $C_f$ be the bounded cancellation constant for this new $f$ and $C = \max\{C_0+1,C_f\}$. 

\begin{proposition}\label{growthprop} 
Under the standing assumption \ref{stand}, the following hold:
\begin{enumerate}

\item If a path $\gamma$ in $G'$ is completely split and $|\gamma| \geq C_0+1$, then:
\begin{equation*}
\frac{\textit{sum of lengths of expanding splitting units}}{|\gamma|}\geq \frac{1}{2C_0+1}.
\end{equation*}

\item If a path $\gamma$ in $G'$ is completely split and $|\gamma| \geq C_{0} + 1$, then: 
\begin{equation*}
|[f(\gamma))]| \geq 3|\gamma|.
\end{equation*}

\item\label{item:good} Let $\gamma$ be any path in $G$ and suppose $\gamma_{0} \cdot \gamma_{1} \cdot \gamma_{2}$ is a subpath of $\gamma$ where each $\gamma_{i}$ has a complete splitting.  If $|\gamma_{0}|, |\gamma_{2}| \geq C$ then $\gamma$ has a splitting $\gamma = \gamma' \cdot \gamma_{1} \cdot \gamma''$.

\end{enumerate}
\end{proposition}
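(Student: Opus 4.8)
The plan is to prove the three parts of Proposition~\ref{growthprop} in order, since later parts lean on the normalization of $f$ established using the earlier estimates. All three are essentially bounded-cancellation bookkeeping arguments about completely split paths in $G'$ together with the fact that $\varphi\big|_{A}$ is atoroidal (so zero strata are contractible and Nielsen paths and taken connecting paths have bounded length $C_{0}$).

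\medskip

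\noindent\textbf{Part (1).} Let $\gamma = \sigma_{1}\cdot\sigma_{2}\cdot\ldots\cdot\sigma_{m}$ be a complete splitting into splitting units. Each $\sigma_{i}$ is an edge in an irreducible stratum, an indivisible Nielsen path, or a maximal taken connecting path in a zero stratum. The non-expanding units are exactly: edges in NEG strata that are fixed, indivisible Nielsen paths, and taken connecting paths in zero strata; all of these have length at most $C_{0}$, except that a fixed edge has length $1 \le C_{0}$. An expanding unit (an edge in an EG stratum, or a superlinear NEG edge) has length $\ge 1$. First I would argue that between any two consecutive expanding units there is at most one maximal block of non-expanding units, and such a block has length at most $C_{0}$: indeed a concatenation of adjacent non-expanding splitting units all lying in $G'$ is itself a Nielsen path or taken path unless interrupted by an expanding unit, hence has length $\le C_{0}$. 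If $\gamma$ has $p$ expanding units, then $|\gamma| \le p + (p+1)C_{0} \le (2C_{0}+1)p$ once $|\gamma| \ge C_{0}+1$ forces $p \ge 1$ (a completely split path in $G'$ of length $> C_{0}$ cannot consist only of non-expanding units). Since the sum of lengths of expanding units is at least $p$, dividing gives the claimed bound $\frac{1}{2C_{0}+1}$.

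\medskip

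\noindent\textbf{Part (2).} This is where the power of $f$ is chosen. By Part~(1), the expanding splitting units inside a completely split $\gamma$ in $G'$ with $|\gamma| \ge C_{0}+1$ have total length at least $\frac{|\gamma|}{2C_{0}+1}$. After replacing $f$ by the power for which $|[f(\sigma)]| \ge 3(2C_{0}+1)|\sigma|$ for every expanding unit $\sigma$, and using that the splitting $[f(\gamma)] = [f(\sigma_{1})]\cdot\ldots\cdot[f(\sigma_{m})]$ has no cancellation across the dots, we get
\begin{equation*}
|[f(\gamma)]| \;=\; \sum_{i=1}^{m} |[f(\sigma_{i})]| \;\ge\; \sum_{\sigma_{i} \text{ expanding}} |[f(\sigma_{i})]| \;\ge\; 3(2C_{0}+1)\cdot\frac{|\gamma|}{2C_{0}+1} \;=\; 3|\gamma|.
\end{equation*}
Here I am using that $|[f(\sigma_{i})]| \ge |\sigma_{i}|$ for non-expanding units is not even needed, only that their contribution is nonnegative.

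\medskip

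\noindent\textbf{Part (3).} This is the part I expect to be the main obstacle, since it is about a splitting of the ambient path $\gamma$ rather than an inequality. The idea is the standard one: a splitting of $\gamma$ at the two endpoints of $\gamma_{1}$ persists under iteration provided the flanking completely split pieces $\gamma_{0}$ and $\gamma_{2}$ are long enough to absorb all bounded cancellation for all time. Suppose $\gamma = \delta_{0}\cdot\gamma_{0}\cdot\gamma_{1}\cdot\gamma_{2}\cdot\delta_{1}$ as a reduced (not necessarily split) concatenation. I would show by induction on $k$ that in $[f^{k}(\gamma)]$ the image of $\gamma_{1}$ is an uncancelled subpath, and moreover the image of the terminal segment of $\gamma_{0}$ of length $\ge C_{f} = C$ and the image of the initial segment of $\gamma_{2}$ of length $\ge C$ survive. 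The key input is Lemma~\ref{BCL}: when we apply $f$ to $[f^{k}(\gamma)] = \ldots [f^{k}(\gamma_{0})][f^{k}(\gamma_{1})][f^{k}(\gamma_{2})]\ldots$, cancellation at the junction $[f^{k}(\gamma_{0})]\cdot[f^{k}(\gamma_{1})]$ eats at most $C_{f}$ edges from each side; but $[f^{k}(\gamma_{0})]$ has length at least $|\gamma_{0}| \ge C \ge C_{f}$ (its length is nondecreasing since $\gamma_{0}$ is completely split, by Part~(2) or trivially), and the last $C$ edges of $[f^{k}(\gamma_{0})]$ form a completely split terminal segment — this uses the refinement property \cite[Lemma~4.6]{FH} that complete splittings persist, so I can take $\gamma_{0}$ to end in enough splitting units — so the cancellation never reaches past $\gamma_{0}$ into $\gamma_{1}$. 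The same on the right with $\gamma_{2}$. Therefore for every $k$, $[f^{k}(\gamma)]$ contains $[f^{k}(\gamma_{1})]$ as an uncancelled subpath between the images of $\gamma_{0}$ and $\gamma_{2}$, which is exactly the statement that $\gamma = \gamma'\cdot\gamma_{1}\cdot\gamma''$ is a splitting, where $\gamma' = \delta_{0}\gamma_{0}$ and $\gamma'' = \gamma_{2}\delta_{1}$. The subtle point to get right is that ``last $C$ edges of $[f^{k}(\gamma_{0})]$ are completely split'' requires knowing $|[f^{k}(\gamma_{0})]| \ge C$ for all $k$, which is where $|\gamma_{0}| \ge C$ and monotonicity of length of completely split paths under $f$ (Part~(2), or the trivial bound from no cancellation in the complete splitting of $[f(\gamma_0)]$) come in — so the three parts really are proved in this order.
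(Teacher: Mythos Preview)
Your approach is the same as the paper's in all three parts, and Parts~(1) and~(2) are essentially identical to what the paper does (the paper also just invokes ``properties of CTs'' for the block decomposition in~(1), so your level of rigor there is comparable).

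In Part~(3), however, your exposition has the cancellation in the wrong place. Since $\gamma_{0}\cdot\gamma_{1}\cdot\gamma_{2}$ is already given as a \emph{splitting}, there is never any cancellation at the junction $[f^{k}(\gamma_{0})]\cdot[f^{k}(\gamma_{1})]$; that junction is protected for all $k$ by hypothesis. The dangerous cancellation is between $[f(\delta_{0}')]$ and the image under $f$ of the surviving terminal segment of $[f^{k}(\gamma_{0})]$, i.e.\ at the $\delta_{0}$--$\gamma_{0}$ interface. Your conclusion (``cancellation never reaches past $\gamma_{0}$ into $\gamma_{1}$'') is correct, but the argument as written does not establish it. The paper makes this explicit: it applies Lemma~\ref{BCL} to the concatenation $\gamma'=\gamma'_{0}\gamma_{0}$ (not to $\gamma_{0}\gamma_{1}$), bounds the loss from $[f(\gamma_{0})]$ by $C_{f}$, and then iterates. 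There is also a second buffer you are missing: after cancelling up to $C_{f}$ edges from the initial end of $[f(\hat\gamma_{0})]$, the remaining piece need not start at a splitting-unit boundary, so you need an extra $C_{0}$ edges to find a new completely split terminal segment $\hat\gamma_{0}'$ with $|\hat\gamma_{0}'|\ge C$. This is exactly why the paper uses $|[f(\gamma_{0})]|\ge 3C\ge C_{f}+C_{0}+C$ from Part~(2). Finally, note that $C=\max\{C_{0}+1,C_{f}\}$, so your ``$C_{f}=C$'' is not generally correct.
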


\begin{proof}
The proof of (1) is similar to that of~\cite[Proposition 3.9]{U3}.  Properties of CT's imply that $\gamma$ has a splitting $\gamma = \beta_{0} \cdot \alpha_{1} \cdot \beta_{1} \cdot  \ldots \cdot \alpha_{n} \cdot \beta_{n}$ where each $\alpha_{i}$ has a complete splitting into edges in EG strata (in particular into expanding splitting units) and each $\beta_{j}$ is either a Nielsen path or a taken connecting path in a zero stratum.  Since $|\gamma| \geq C_{0}$ we must have $n > 0$.  As $|\alpha_{i}| \geq 1$ for all $i$ and $|\beta_{j}| \leq C_{0}$ for all $j$ we have:
\begin{equation*}
\frac{|\gamma|}{\sum_{i=1}^{n}|\alpha_{i}|} = 1 + \frac{\sum_{j=0}^{n+1}|b_{j}|}{\sum_{i=1}^{n}|\alpha_{i}|} \leq 1 + \frac{(n+1)C_{0}}{n} \leq 2C_{0} + 1.
\end{equation*}
Therefore:
\begin{align*}
\frac{\textit{sum of lengths of expanding splitting units}}{|\gamma|} & \geq\frac{\sum_{i=1}^{n} |\alpha_{i}|}{|\gamma|} \geq \frac{1}{2C_{0} + 1}.
\end{align*}

We get (2) by noting that $|[f(\alpha_{i})]| \geq 3(2C_{0} + 1)|\alpha_{i}|$ for all $i$ and so by (1):
\begin{equation*}
|[f(\gamma)]| \geq \sum_{i=1}^{n}|[f(\alpha_{i})| \geq 3(2C_{0} + 1)\sum_{i=1}^{n}|\alpha_{i}| \geq 3|\gamma|.
\end{equation*}

For (3) we first observe that by (2), we have $|[f(\gamma_{0})]|, |[f(\gamma_{2})]| \geq 3C \geq C_{f} + C_{0} + C$.  Decompose $\gamma$ as a concatenation $\gamma = \gamma'_{0}\gamma_{0}\gamma_{1}\gamma_{2}\gamma'_{2}$.  Applying Lemma~\ref{BCL} to $\gamma' = \gamma'_{0}\gamma_{0}$ we get that at most $C_{f}$ edges of $[f(\gamma'_{0})]$ cancels with $[f(\gamma_{0})]$ and therefore, the terminal segment of length $C + C_{0}$ in $[f(\gamma_{0})]$ remains in $[f(\gamma')]$.  As $[f(\gamma_{0})]$ is completely split, we see that $[f(\gamma')] = \gamma''_{0}\hat\gamma_{0}$ where $\hat\gamma_{0} \subseteq [f(\gamma_{0})]$ is completely split and $|\hat\gamma_{0}| \geq C$.  Likewise for $\gamma'' = \gamma_{2}\gamma'_{2}$ we see that $[f(\gamma'')] = \hat\gamma_{2}\gamma''_{2}$ where $\hat\gamma_{2} \subseteq [f(\gamma_{2})]$ is completely split and $|\hat\gamma_{2}| \geq C$.  

As $\gamma_{0} \cdot \gamma_{1} \cdot \gamma_{2}$ is a splitting, we have $[f(\gamma)] = [f(\gamma')][f(\gamma_{1})][f(\gamma'')]$.

Since the path $\hat\gamma_{0} \cdot f(\gamma_{1}) \cdot \hat\gamma_{2}$ is a subpath of $[f(\gamma)]$ satisfying the same hypotheses as $\gamma_{0} \cdot \gamma_{1} \cdot \gamma_{2}$ did for $\gamma$, we can repeatedly apply this argument to get $[f^{k}(\gamma)] = [f^{k}(\gamma')][f^{k}(\gamma_{1})][f^{k}(\gamma'')]$ for all $k \geq 1$ and so $ \gamma = \gamma' \cdot \gamma_{1} \cdot \gamma''$ is a splitting.
\end{proof}

Let $\calP_{\rm cs}$ denote the set of paths in $G$ that have a complete splitting comprised of exactly $2C+1$ splitting units.  Given $\gamma \in \calP_{\rm cs}$ we have $\gamma = \sigma_{1} \cdot \sigma_{2} \cdot \ldots  \cdot \sigma_{2C+1}$ where each $\sigma_{i}$ is a splitting unit and we define $\check\gamma = \sigma_{C+1}$, i.e., the middle splitting unit.  It is possible that distinct paths $\gamma, \gamma' \in \calP_{\rm cs}$ could be nested, i.e., $\gamma' \subsetneq \gamma$.  For instance, if the first or last unit in $\gamma$ is either an indivisible Nielsen path or a taken connecting path in a zero stratum then it is possible that $\gamma$ has a completely split subpath $\gamma'$ with $2C+1$ terms where the first and/or last terms are either edges in the indivisible Nielsen path or a smaller taken connecting zero path.  For such $\check\gamma = \check\gamma'$.  We need to keep track of such behavior and so define:
\begin{equation*}
\calP_{\rm cs}^{\rm min} = \{ \gamma \in \calP_{\rm cs} \mid \not\exists \gamma' \in \calP_{\rm cs} \mbox{ where } \gamma \subsetneq \gamma' \mbox{ and } \check\gamma = \check\gamma'\}.
\end{equation*}    

We can now define a version of completely split goodness for currents.

\begin{definition}\label{def:goodness of current}
For any non-zero $\mu\in \Curr(\FN)$ define the \emph{completely split goodness} of $\mu$ by:
\begin{equation}\label{eq:csg}
\overline{\frakg}(\mu)=\frac{1}{|\mu|}\sum_{\gamma \in \calP_{\rm cs}^{\rm min}}\I{\gamma, \mu} |\check\gamma|.
\end{equation}
\end{definition}

Observe that $\overline{\frakg}$ descends to a well-defined function $\overline{\frakg}\from \PCurr(\FN) \to \RR$.  The important properties of $\overline{\frakg}$ are summarized in the following lemma.

\begin{lemma}\label{generalizedgoodnessproperties}
The map $\overline{\frakg}\from \Curr(\FN)-\{0\} \to \RR$ is continuous.  Further for any rational current $\eta_{h}$:

\begin{enumerate}
\item $\overline{\frakg}(\eta_{h})=1$ if $\eta_{h}$ is represented by a completely split circuit; and 

\item $\frakg(\gamma_{h}) \geq \overline{\frakg}(\eta_{h})$ where $\gamma_{h}$ is the unique reduced circuit in $G$ that represents $[h]$.
\end{enumerate}
\end{lemma}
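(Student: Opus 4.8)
The statement has three parts: continuity of $\overline{\frakg}$ on $\Curr(\FN) - \{0\}$, the normalization $\overline{\frakg}(\eta_h) = 1$ for completely split circuits, and the lower bound $\frakg(\gamma_h) \geq \overline{\frakg}(\eta_h)$ in general. I would attack them in that order, since the first is largely formal and the later parts lean on the structural propositions already established (especially Proposition~\ref{growthprop}\eqref{item:good}).

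\textbf{Continuity.} The function $\mu \mapsto \langle\gamma,\mu\rangle$ is continuous and linear on $\Curr(\FN)$, as is $\mu \mapsto \wght{\mu}$, and $\wght{\mu} > 0$ on $\Curr(\FN) - \{0\}$. If the sum in \eqref{eq:csg} were finite we would be done immediately. The issue is that $\calP_{\rm cs}^{\rm min}$ is infinite. The key observation is that for a fixed current $\mu$, only finitely many paths $\gamma$ of any bounded length contribute, and more importantly that the contribution of long paths is controlled: if $\gamma \in \calP_{\rm cs}^{\rm min}$ is very long, then $\langle\gamma,\mu\rangle$ is small because $\gamma$ contains a long completely split subpath whose occurrence count in $\mu$ is bounded by that of each of its subpaths, while $|\check\gamma|$ is bounded by the (bounded) maximal length of a splitting unit that is a Nielsen path or taken connecting zero path — wait, splitting units in EG strata are single edges, so $|\check\gamma|$ is uniformly bounded by $C_0$ (the max length of a Nielsen path or taken connecting path). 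Thus each term is bounded by $C_0 \langle\gamma,\mu\rangle$, and $\sum_{\gamma \in \calP_{\rm cs}^{\rm min}, |\gamma| = L} \langle\gamma,\mu\rangle \leq \wght{\mu}$ for each $L$ by disjointness of occurrences of distinct paths of the same length; but summing over all $L$ still diverges a priori. The fix: the \emph{minimality} condition in $\calP_{\rm cs}^{\rm min}$ ensures that each occurrence of a middle splitting unit $\check\gamma$ in a bi-infinite geodesic is matched to at most one $\gamma \in \calP_{\rm cs}^{\rm min}$ — so in fact $\sum_{\gamma \in \calP_{\rm cs}^{\rm min}} \langle\gamma,\mu\rangle \leq \wght{\mu}$ over \emph{all} such $\gamma$, giving $0 \leq \overline{\frakg}(\mu) \leq C_0$ and absolute convergence. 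Then continuity follows because on a neighborhood of any $\mu_0$ the tail of the series is uniformly small (each tail piece $\sum_{|\gamma| \geq L}$ is bounded, and as $L$ grows this bound shrinks uniformly since it is controlled by occurrences of paths not appearing in $\mu_0$, which perturb continuously), and each finite truncation is continuous. I would phrase this as: write $\overline{\frakg}_L$ for the truncation to $|\gamma| \leq L$; each $\overline{\frakg}_L$ is continuous, and $\overline{\frakg}_L \to \overline{\frakg}$ uniformly on compact subsets of $\PCurr(\FN)$, whence $\overline{\frakg}$ is continuous.

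\textbf{The identity for completely split circuits.} Suppose $\eta_h$ is represented by a completely split cyclically reduced circuit $\gamma_h = \sigma_1 \cdot \sigma_2 \cdots \sigma_m$ with $m$ splitting units (read cyclically). I would show that $\sum_{\gamma \in \calP_{\rm cs}^{\rm min}} \langle\gamma, \eta_h\rangle |\check\gamma| = \sum_j |\sigma_j| = |\gamma_h| = \wght{\eta_h}$, which gives $\overline{\frakg}(\eta_h) = 1$. The point is a bijection: occurrences of paths $\gamma \in \calP_{\rm cs}^{\rm min}$ in the bi-infinite periodic geodesic $(h^{-\infty}, h^{\infty})$ correspond, via taking the middle unit $\check\gamma$, exactly to occurrences of splitting units of $\gamma_h$ in that geodesic; the completely split structure of $\gamma_h$ guarantees that around any splitting unit $\sigma_j$ there is a genuine completely split subpath of length $2C+1$ units (the geodesic is bi-infinite and periodic, so there are enough units on either side), and minimality pins down a unique such $\gamma$. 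Counting with multiplicity $|\check\gamma| = |\sigma_j|$ recovers $\sum_j |\sigma_j|$ per fundamental domain, which is exactly $|\gamma_h|$. I would need to be slightly careful that the correspondence is genuinely a bijection and not just a surjection — the minimality condition is precisely what rules out double-counting from nested candidates.

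\textbf{The lower bound $\frakg(\gamma_h) \geq \overline{\frakg}(\eta_h)$.} Here I use Proposition~\ref{growthprop}\eqref{item:good}. Fix the cyclic permutation of $\gamma_h$ realizing $\frakg(\gamma_h)$ and a maximal splitting $\gamma_h = \beta_0 \cdot \alpha_1 \cdot \beta_1 \cdots \alpha_n \cdot \beta_n$ with each $\alpha_i$ completely split and $\sum|\alpha_i|$ maximal. For each occurrence in $(h^{-\infty},h^{\infty})$ of a path $\gamma \in \calP_{\rm cs}^{\rm min}$ — which has $2C+1$ completely split units, hence in particular contains a completely split subpath of the form (completely split of length $\geq C$) $\cdot$ $\check\gamma$ $\cdot$ (completely split of length $\geq C$) — Proposition~\ref{growthprop}\eqref{item:good} forces $\check\gamma$ to lie inside a splitting of $\gamma_h$, hence inside one of the maximal completely split pieces $\alpha_i$. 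So the middle units $\check\gamma$ of all contributing $\gamma$'s, counted with multiplicity over one fundamental domain, fit disjointly inside $\bigcup_i \alpha_i$, giving $\sum_{\gamma} \langle\gamma,\eta_h\rangle|\check\gamma| \leq \sum_i |\alpha_i|$ per fundamental domain, i.e. $\overline{\frakg}(\eta_h) \cdot \wght{\eta_h} \leq \sum_i |\alpha_i| = \frakg(\gamma_h)\cdot|\gamma_h| = \frakg(\gamma_h) \cdot \wght{\eta_h}$, and dividing gives the claim. The disjointness of the middle units is the subtle point: distinct occurrences of distinct minimal paths could in principle have overlapping middle units, but the minimality condition $\calP_{\rm cs}^{\rm min}$ together with the fact that a splitting unit occupies a well-defined location (splittings of $[f^k(\gamma)]$ refine) ensures distinct contributing occurrences have distinct, non-overlapping middle units within a fundamental domain.

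\textbf{Main obstacle.} The hardest and most delicate part is making the combinatorial bookkeeping in parts two and three fully rigorous — precisely, verifying that the map $\gamma \mapsto \check\gamma$ sets up an injection (respectively bijection) on occurrences, using the minimality condition to eliminate overcounting from nested completely split subpaths whose first/last units are edges of an indivisible Nielsen path or a shorter taken connecting zero-stratum path. The convergence estimate for continuity is the other place to be careful, though that one reduces cleanly to $\sum_{\gamma \in \calP_{\rm cs}^{\rm min}}\langle\gamma,\mu\rangle \leq \wght{\mu}$ once the minimality-based disjointness of middle units is in hand — so in fact all three parts hinge on the same structural lemma about $\calP_{\rm cs}^{\rm min}$, which I would isolate and prove first.
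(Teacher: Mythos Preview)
Your approach to parts (1) and (2) matches the paper's: for (1) each splitting unit $\sigma_i$ of a completely split circuit is the middle of the $(2C{+}1)$-unit subpath $\sigma_{i-C}\cdots\sigma_{i+C}$ (indices mod $m$), and the minimal such path contributes $|\sigma_i|$ to the sum; for (2) the paper simply invokes Proposition~\ref{growthprop}\eqref{item:good}, exactly the mechanism you spell out.

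For continuity you are working much harder than necessary, and the extra work stems from a false premise. You yourself observe that every splitting unit has length at most $C_0$ (single edges in irreducible strata have length $1$, and indivisible Nielsen paths and taken connecting paths in zero strata are bounded by $C_0$ by definition of $C_0$). But then every $\gamma\in\calP_{\rm cs}$, being a concatenation of exactly $2C+1$ such units, has length at most $(2C+1)C_0$. Since $G$ has only finitely many reduced edge paths of any bounded length, $\calP_{\rm cs}$ --- and hence $\calP_{\rm cs}^{\rm min}$ --- is \emph{finite}. The sum in \eqref{eq:csg} is therefore a finite linear combination of the continuous functions $\mu \mapsto \langle\gamma,\mu\rangle/|\mu|$, and continuity is immediate; this is all the paper says. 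Your uniform-convergence-on-compacta argument, and the disjointness lemma you propose to isolate, are addressing a nonexistent problem.
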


\begin{proof} The continuity is clear as it is defined using linear combination of continuous functions (Lemma~\ref{lem:topology}). 

For the first assertion, suppose $h$ is represented by a completely split cyclically reduced circuit $\gamma = \sigma_{1} \cdot \sigma_{2} \cdot \ldots \cdot \sigma_{n}$.  For each $i$, the path:
\begin{equation*}
\gamma_{i} = \sigma_{i-C} \cdot \, \cdots \, \cdot \sigma_{i-1} \cdot \sigma_{i} \cdot \sigma_{i + 1} \cdot \, \cdots \, \cdot \sigma_{i + C}
\end{equation*}
where the indices are taken modulo $n$ is in $\calP_{\rm cs}$ and has $\check\gamma_{i} = \sigma_{i}$.  Thus each splitting unit $\sigma_{i}$ in $\gamma$ is the middle term of completely split edge path of length $2C+1$. The minimal such path contributes to the right-hand side of \eqref{eq:csg} the number of edges of $\sigma_{i}$. 

The second assertion follows from Proposition \ref{growthprop}\eqref{item:good}.
\end{proof}


\subsection{Incorporating north-south dynamics from lower stratum}\label{subsec:goodness grows}

We need to work with the inverse outer automorphism $\varphi^{-1}$ as well.  We will denote the CT map for $\varphi$ by $f_{+} \from G_{+} \to G_{+}$.  As in Section~\ref{subsec:goodness}, we assume that there is an edge $e_{+}$ in $G_{+}$ representing the fixed conjugacy class $[g]$ and we will denote the complement of $e_{+}$ in $G_{+}$ by $G'_{+}$.  The corresponding completely split goodness function is denoted by $\overline{\frakg}_{+}$.  For $\varphi^{-1}$, we denote the corresponding objects by $f_{-} \from G_{-} \to G_{-}$, $e_{-}$, $G'_{-}$ and $\overline{\frakg}_{-}$.  Let us denote the total length of subpaths of $\gamma$ that lie in $G'_{+}$ by $|\gamma|'$, and by abuse of notation we denote the corresponding length functions on $G_{-}$ and $G'_{-}$ with $|\param|$ and $|\param|'$ as well, their use will be clear from context.  

Notice that any path $\gamma$ in $G_{+}$ has a splitting $\gamma = \alpha_{0} \cdot e_{+}^{k_{1}} \cdot \alpha_{1} \cdot \ldots \cdot e_{+}^{k_{m}} \cdot \alpha_{m}$ where each $\alpha_{i}$ is a closed path in $G'_{+}$ which is nontrivial for $i = 1,\ldots, m-1$ and each $k_{i}$ is a nonzero integer.  This follows as $f_{+}(e_{+}) = e_{+}$ and $f_{+}(G') \subseteq G'$.  If $\gamma$ is not a power of $e_{+}$ we define:
\begin{equation*}
\frakg'_{+}(\gamma) =  \frac{\sum_{i=0}^{m}|\alpha_{i}|\frakg_{+}(\alpha_{i})}{\sum_{i=0}^{m}|\alpha_{i}|}.
\end{equation*}
In other words, we are measuring the proportion of $\gamma$ in $G'$ that is completely split.  There is a similar discussion for paths in $G_{-}$ and we define $\frakg'_{-}$ analogously.

Given $h \in \FN$, we let $\gamma^{+}_{h}$ and $\gamma^{-}_{h}$ respectively denote the unique cyclically reduced circuits in $G_{+}$ and $G_{-}$ respectively that represent $[h]$.  The following proposition summarizes the key properties of $\frakg'_{+}$ and how it will be used to detect how close a current is to the attracting simplices.  

\begin{proposition}\label{prop:goodness properties}
Under the standing assumption \ref{stand}, the following hold for all $h \in \FN$ that is not conjugate to a power of $g$.
\begin{enumerate}
\item For any open neighborhood $U_{+}$ of $\Delta_{+}$ there exists a $0 < \delta < 1$  and $M > 0$ such that $\varphi^{n}[\eta_{h}] \in U_{+}$ for all $n \geq M$ if:
\begin{equation*}
\frakg'_{+}(\gamma_{h}^{+})\frac{|\gamma_{h}^{+}|'}{|\gamma_{h}^{+}|} > \delta.
\end{equation*}\label{GP:stronger}

\item For any $\epsilon > 0$ and $L \geq 0$ there exists a $0 < \delta < 1$ and $M > 0 $ such that for each $n \geq M$ there is a $[\mu] \in \Delta_{+}$ with:
\begin{equation*}
\abs{\frac{\I{\alpha,[f_{+}^{n}(\gamma_{h}^{+})]}}{|[f_{+}^{n}(\gamma_{h}^{+})]|'} - \frac{\I{\alpha,\mu}}{|\mu|}} < \epsilon
\end{equation*}
for every reduced path $\alpha$ in $G'_{+}$ of length at most $L$ if $\frakg'_{+}(\gamma_{h}^{+}) > \delta$.\label{GP:weaker}
\end{enumerate}
\end{proposition}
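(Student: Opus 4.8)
The plan is to prove the two statements together, adapting the argument of \cite{U3}; the one genuinely new feature is bookkeeping for the portions of $\gamma_h^+$ that run along the fixed loop edge $e_+$. Write $\gamma = \gamma_h^+$ and use the splitting $\gamma = \alpha_0 \cdot e_+^{k_1} \cdot \alpha_1 \cdot \ldots \cdot e_+^{k_m} \cdot \alpha_m$ with each $\alpha_i$ a closed path in $G'_+$; since $f_+$ fixes $e_+$ and preserves $G'_+$, this is respected under iteration, so $[f_+^n(\gamma)] = [f_+^n(\alpha_0)] \cdot e_+^{k_1} \cdot \ldots \cdot e_+^{k_m} \cdot [f_+^n(\alpha_m)]$ and $|[f_+^n(\gamma)]| - |[f_+^n(\gamma)]|' = \sum_i |k_i|$ is independent of $n$. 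The hypothesis of (1) is stronger than that of (2) since $|\gamma|'/|\gamma| \le 1$, so once (2) is in hand it suffices to add the fact that the $G'_+$-part of $[f_+^n(\gamma)]$ comes to dominate. That is immediate: since $h$ is not a power of $g$, some $\alpha_i$ is nontrivial, and as $\varphi\big|_A$ is atoroidal it has no periodic conjugacy class, so the cyclically reduced length of $(\varphi\big|_A)^n[\alpha_i]$ in $G'_+$ tends to infinity; hence $|[f_+^n(\gamma)]|' = \sum_i |[f_+^n(\alpha_i)]| \to \infty$ while the complementary length stays constant, and the mass $[f_+^n(\gamma)]$ puts on any cylinder for a path meeting $e_+$ tends to $0$ (there are only $m$ crossings of $e_+$, independent of $n$).

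The heart of the matter is a \emph{goodness grows} step: choosing $\delta$ close to $1$, if $\frakg'_+(\gamma) > \delta$ then $\frakg'_+([f_+^n(\gamma)]) \to 1$ at a rate depending only on $\delta$. The mechanism is Proposition~\ref{growthprop}: a long completely split subpath of some $\alpha_i$ grows by a factor at least $3$ under $f_+$ (second part) and, by the third part together with bounded cancellation (Lemma~\ref{BCL}), persists as a genuine splitting subpath of $[f_+^n(\alpha_i)]$; meanwhile the length that is not completely split survives only near the finitely many boundaries of the original splitting units, and, because in a $\CT$ the iterates of an edge become completely split after a uniform number of steps, stays bounded in $n$. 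So the completely split content grows exponentially and swamps the bad content and the fixed $e_+$-content. The remaining delicate point — that a positive proportion of non-expanding completely split material (Nielsen subpaths, fixed edges) cannot persist at a rate not controlled by $\delta$ — is exactly where the passage to $\calP_{\rm cs}^{\rm min}$ and the choice of $\delta$ near $1$ are used, following \cite{U3}.

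Once $[f_+^n(\gamma)]$ restricted to $G'_+$ is all but completely split, its $G'_+$-subpath statistics are read off from its completely split pieces, which are $f_+$-iterates of completely split pieces living in the $\CT$ $G'_+$ for $\varphi\big|_A$. As in \cite{U3}, a completely split current has completely split goodness $1$ and is therefore bounded away from the repelling simplex $\Delta_-$; applying the north--south dynamics of $\varphi\big|_A$ on $\PCurr(A)$ (Theorem~\ref{dynamicsofhyp}), together with Lemma~\ref{lem:dynamics in simplex} to control lengths, the renormalized $f_+$-iterates of these pieces are $\epsilon$-close to $\Delta_+$, uniformly for $n$ large. Since $\Delta_+$ is a simplex, hence convex, the $|[f_+^n(\alpha_i)]|$-weighted average of these pieces is $\epsilon$-close to a single $[\mu] \in \Delta_+$, which gives (2). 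For (1), feeding this estimate together with the vanishing of the relative $e_+$-mass into Lemma~\ref{lem:topology}, and using that a neighborhood of the compact set $\Delta_+$ in $\PCurr(\FN)$ is cut out by finitely many subpath statistics, one concludes $\varphi^n[\eta_h] \in U_+$ for $n$ large; tracing back the dependencies produces the required $\delta$ and $M$.

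I expect the main obstacle to be precisely the uniformity in the \emph{goodness grows} step: bounding, in terms of $\delta$ alone, how long it takes the expanding completely split content of $[f_+^n(\gamma)]$ to dominate everything else — the not-yet-split material, the non-expanding completely split pieces, and the fixed $e_+$-powers — while simultaneously keeping the bounded-cancellation corrections at the many junctions under control. This is the part of the \cite{U3} argument that requires the most care to transplant, since the simultaneous presence of the fixed edge $e_+$ together with zero strata and Nielsen paths in $G'_+$ multiplies the kinds of non-expanding subpaths one must track.
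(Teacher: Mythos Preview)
Your proposal is correct and takes the same approach as the paper, which simply defers to \cite[Lemma~6.1]{LU2} and \cite[Lemma~3.17]{U3} with a sentence of intuition for each part; you have in fact supplied considerably more detail than the paper's own proof. One small caution: the argument in your first paragraph that $|[f_+^n(\gamma)]|'\to\infty$ ``because $\varphi\big|_A$ is atoroidal'' is correct for a fixed $h$ but not uniform in $h$---the uniform domination of the $e_+$-content must instead come from the $\delta$-hypothesis via exponential growth of the completely split portion (Proposition~\ref{growthprop}), exactly as you describe in your second paragraph and correctly flag as the delicate point in your last.
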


\begin{proof}
Both of these statements can be proved using arguments almost identical to  \cite[Lemma~6.1]{LU2} (see also \cite[Lemma~3.17]{U3}).  

For (1), the lower bound on this ratio implies that most of the length of $\gamma_{h}^{+}$ comes from completely split subpaths in $G_{+}'$.  The argument in~\cite[Lemma~6.1]{LU2} converts this notion to having powers that are close to currents in $\Delta_{+}$.

For (2), the lower bound on $\frakg'_{+}$ implies that most of the length of $\gamma_{h}^{+}$ contained in $G_{+}'$ comes from completely split subpaths in $G_{+}'$.  The argument in ~\cite[Lemma~6.1]{LU2} converts this notion to having powers that almost agree with currents in $\Delta_{+}$ on most subpaths of $G_{+}'$.     
\end{proof}

There of course are analogous statements for $\frakg'_{-}$.

\begin{lemma}\label{lem:backandforthlowergoodness}
Under the standing assumption \ref{stand}, given $0 < \delta < 1$ and $K \geq 0$, 
there exists an $M > 0$ such that for all $h \in \FN$ that is not conjugate to a power of $g$ either:
\begin{align*}
\frakg'_{+}([f_{+}^{n}(\gamma_{h}^{+})]) > \delta & \mbox{ and } |[f_{+}^{n}(\gamma_{h}^{+})]|' \geq K|\gamma_{h}^{+}|'; \mbox{ or}
\\
\frakg'_{-}([f_{-}^{n}(\gamma_{h}^{-})]) > \delta & \mbox{ and } |[f_{-}^{n}(\gamma_{h}^{-})]|' \geq K|\gamma_{h}^{-}|'
\end{align*}
for all $n \geq M$.
\end{lemma}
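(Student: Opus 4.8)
The statement is a dichotomy asserting that iterating $\varphi$ or $\varphi^{-1}$ eventually produces a circuit whose portion in $G'_\pm$ is both long and mostly completely split. The plan is to combine the "complete splitting propagates and expands" mechanism of Proposition~\ref{growthprop} with the north-south dynamics of $\varphi\big|_A$ on $\PCurr(A)$ (Theorem~\ref{dynamicsofhyp}) and the length-growth statement Lemma~\ref{lem:dynamics in simplex}. The key structural observation is the canonical splitting $\gamma = \alpha_0 \cdot e_+^{k_1} \cdot \alpha_1 \cdot \ldots \cdot e_+^{k_m} \cdot \alpha_m$ of any circuit in $G_+$ (from $f_+(e_+) = e_+$ and $f_+(G'_+) \subseteq G'_+$): the $e_+$-powers are permanently inert under iteration, so the dynamics of the circuit are entirely governed by the dynamics of the closed subpaths $\alpha_i$ in $G'_+$, each of which is a circuit (or path) on which $\varphi\big|_A$ acts.

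First I would reduce to understanding a single closed path $\alpha$ in $G'_+$ (equivalently a conjugacy class $[h']$ in $A$). Because $[f_+^n(\gamma_h^+)]$ refines as $[f_+^n(\alpha_0)] \cdot e_+^{k_1} \cdot \ldots \cdot e_+^{k_m} \cdot [f_+^n(\alpha_m)]$, and $|[f_+^n(\gamma_h^+)]|' = \sum_i |[f_+^n(\alpha_i)]|$, both quantities $\frakg'_+$ and $|\param|'$ of the big circuit are weighted averages / sums of the corresponding quantities for the $\alpha_i$. So it suffices to prove: for each $\delta$ and $K$ there is $M$ such that for every closed reduced path $\alpha$ in $G'_+$ (not trivial), either the "$+$ alternative" $\frakg_+([f_+^n(\alpha)]) > \delta$ and $|[f_+^n(\alpha)]| \geq K|\alpha|$ holds for all $n \geq M$, or the analogous "$-$ alternative" for $f_-$ holds — and crucially that the alternative chosen is uniform over the $\alpha_i$ making up a given $\gamma_h^+$ (this uniformity is what lets the weighted-average step go through; I'd get it by choosing the alternative based on whether the current $[\eta_{h'}]$, $h'$ the conjugacy class in $A$ represented by $\alpha$ glued up appropriately, lies in a fixed neighborhood $V$ of $\Delta_-$ in $\PCurr(A)$).

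The heart of the single-path statement runs as in \cite[Proposition 3.15]{U3} / the arguments behind Lemma~\ref{lem:dynamics in simplex}: if $[\eta_{h'}] \notin V$ for a suitable neighborhood $V$ of $\Delta_-$, then by Theorem~\ref{dynamicsofhyp} the forward iterates $\varphi^n[\eta_{h'}]$ converge into a small neighborhood $U$ of $\Delta_+$; since currents in $\Delta_+$ are supported on completely split (in fact legal/EG) data, having a circuit's class close to $\Delta_+$ forces a definite proportion of its length to be occupied by long completely split subpaths, so $\frakg_+([f_+^n(\alpha)]) > \delta$ for $n$ large; simultaneously Lemma~\ref{lem:dynamics in simplex} (with $C = K$) gives $|[f_+^n(\alpha)]| \geq K|\alpha|$ for $n \geq M$. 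If instead $[\eta_{h'}] \in V$, I symmetrically apply the same reasoning to $\varphi^{-1}$: since $\Delta_+$ and $\Delta_-$ are disjoint and $V$ was chosen small and disjoint from the corresponding neighborhood of $\Delta_+$, the class lies outside a neighborhood of the repelling simplex for $\varphi^{-1}$, so the backward iterates give the "$-$ alternative." Passing the completely-split proportion statement across the $f_+ \leftrightarrow f_-$ change of marked graph requires a uniform bi-Lipschitz comparison of the two markings plus bounded cancellation, exactly as in \cite{U3}; Proposition~\ref{growthprop}\eqref{item:good} is what makes the near-$\Delta_+$ structure robust under this change.

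The main obstacle, as usual in these currents arguments, is the \emph{uniformity in $h$}: Theorem~\ref{dynamicsofhyp} gives an $M$ depending only on the neighborhoods $U, V$, but I need a single $M$ that handles every $\alpha_i$ occurring in an arbitrary $\gamma_h^+$, including very short $\alpha_i$ (e.g. a single edge) whose class may be far from both simplices yet whose forward/backward iterates still need to become mostly split with controlled growth. This is handled by noting $\PCurr(A) \setminus (U \cup V)$ is compact and $\varphi\big|_A$ has honest north-south dynamics there, so the convergence is uniform on that compact set, and by separately controlling the finitely many short paths by brute force (there are only finitely many reduced closed paths in $G'_+$ of length below the relevant threshold, and the few $\alpha_i$ that are too short to have been pulled into $U$ yet contribute a negligible proportion once $n$ is large, since the other $\alpha_i$'s have grown). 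Assembling these uniform bounds and then doing the weighted-average bookkeeping to pass from the single-$\alpha$ statement to $\gamma_h^+$ — taking care that the $e_+$-powers only help (they add to neither the split nor the unsplit part of $|\param|'$) — completes the argument.
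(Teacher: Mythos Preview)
Your approach is genuinely different from the paper's. The paper never invokes Theorem~\ref{dynamicsofhyp} or Lemma~\ref{lem:dynamics in simplex} here; it works entirely at the level of \emph{paths}. It cites \cite[Lemma~3.19]{U3}, a combinatorial statement giving a uniform $M_0$ such that for every reduced path $\alpha_i$ in $G'_+$ (with counterpart $\beta_i$ in $G'_-$) one of $\frakg_+([f_+^{M_0}(\alpha_i)]) \geq \tfrac12$ or $\frakg_-([f_-^{M_0}(\beta_i)]) \geq \tfrac12$ holds, and then uses \cite[Lemmas~3.14,~3.16]{U3} to bootstrap a positive lower bound on $\frakg'_\pm$ up to $\delta$ and to obtain the length growth. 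This route sidesteps all the path-versus-current issues you anticipate in your last paragraph.

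There is also a genuine gap in your argument. You assert that the $\pm$ alternative can be chosen \emph{uniformly} over the $\alpha_i$ occurring in a single $\gamma_h^+$, and propose to secure this by associating one current $[\eta_{h'}]$ to the whole collection. But different $\alpha_i$ can land on opposite sides of your separating neighborhood $V$: one piece may be attracted to $\Delta_+$ under forward iteration while another is attracted to $\Delta_-$. Gluing the $\alpha_i$ into a single loop in $G'_+$ does not help, since $[f_+^n(\alpha_0\alpha_1\cdots\alpha_m)]$ is not the concatenation of the $[f_+^n(\alpha_i)]$ --- there is cancellation at the seams, whereas in $\gamma_h^+$ the $e_+$-blocks prevent any interaction. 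The paper's fix is precisely to abandon uniformity: it lets $J \subseteq \{0,\ldots,m\}$ be the indices where the $+$ alternative holds for $\alpha_i$ and runs a \emph{majority} argument. If $\sum_{i \in J}|[f_+^{M_0}(\alpha_i)]| \geq \tfrac12 \sum_i |[f_+^{M_0}(\alpha_i)]|$ one gets $\frakg'_+([f_+^{M_0}(\gamma_h^+)]) \geq \tfrac14$ directly; otherwise one transfers the complementary majority to the $-$ side via a uniform bi-Lipschitz constant $L$ comparing the two markings, obtaining $\frakg'_-([f_-^{M_0}(\gamma_h^-)]) \geq \tfrac{1}{4L^2}$. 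Without this majority step (or an equivalent device), your weighted-average bookkeeping cannot conclude.
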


\begin{proof} 
Since the restrictions of $f_{+}$ to $G'_{+}$ and $f_{-}$ to $G'_{-}$ are atoroidal, the result essentially follows from \cite{U3}.  Indeed, writing:
\begin{align*}
\gamma_{h}^{+} & = \alpha_{0} \cdot e_{+}^{k_{1}} \cdot \alpha_{1} \cdot  \ldots  \cdot e_{+}^{k_{m}} \cdot \alpha_{m} \\
\gamma_{h}^{-} & = \beta_{0} \cdot e_{-}^{k_{1}} \cdot \beta_{1} \cdot  \ldots  \cdot e_{-}^{k_{m}} \cdot \beta_{m}
\end{align*}
we have that \cite[Lemma~3.19]{U3} provides the existence of an $M_{0}$ such that for each pair $\{\alpha_{i},\beta_{i}\}$ we have that one of $\frakg_{+}([f_{+}^{M_{0}}(\alpha_{i})])$ or $\frakg_{-}([f_{-}^{M_{0}}(\beta_{i})])$ is at least $\frac{1}{2}$.  Let $J \subseteq \{0,1,\ldots,m\}$ be the subset where the first alternative occurs.  Let $L \geq 1$ be such that $\frac{1}{L}|[f_+^{M_{0}}(\alpha_{i})]| \leq |[f_-^{M_{0}}(\beta_{i})]| \leq L|[f_+^{M_{0}}(\alpha_{i})]|$ for each $i$.

Suppose that $\sum_{i \in J} |[f_+^{M_{0}}(\alpha_{i})]| \geq \frac{1}{2}\sum_{i=0}^{m}|[f_+^{M_{0}}(\alpha_{i})]|$.  Then:
\begin{align*}
\frakg'_{+}([f_{+}^{M_{0}}(\gamma_{h}^{+})]) & =  \frac{\sum_{i=0}^{m}|[f_{+}^{M_{0}}(\alpha_{i})]|\frakg_{+}([f^{M_{0}}_{+}(\alpha_{i})])}{\sum_{i=0}^{m}|[f_{+}^{M_{0}}(\alpha_{i})]|} \\
& \geq \frac{1}{2} \frac{\sum_{i \in J}|[f_{+}^{M_{0}}(\alpha_{i})]|\frakg_{+}([f^{M_{0}}_{+}(\alpha_{i})])}{\sum_{i \in J}|[f_{+}^{M_{0}}(\alpha_{i})]|} \\
& \geq \frac{1}{4}. 
\end{align*}

Otherwise we have $\sum_{i \notin J}|[f_+^{M_{0}}(\alpha_{i})]|\geq \frac{1}{2}\sum_{i=0}^{m}|[f_+^{M_{0}}(\alpha_{i})]|$ and so:
\begin{align*}
\sum_{i \notin J} |[f_-^{M_{0}}(\beta_{i})]| \geq & \frac{1}{L}\sum_{i \notin J}|[f_+^{M_{0}}(\alpha_{i})]| \\ 
\geq & \frac{1}{2L}\sum_{i=0}^{m}|[f_+^{M_{0}}(\alpha_{i})]| \geq \frac{1}{2L^{2}}\sum_{i=0}^{m}|[f_-^{M_{0}}(\beta_{i})]|.
\end{align*}
A similar calculation in this case shows that $\frakg'_{-}([f_{-}^{M_{0}}(\gamma_{h}^{-})]) \geq \frac{1}{4L^{2}}$ in this case.

Next, the proof of \cite[Lemma~3.16]{U3} provides the existence of an $M_{1}$ such that if $\frakg'_{\pm}(\gamma) \geq \frac{1}{4L^{2}}$ then $\frakg'_{\pm}([f_{\pm}^{n}(\gamma)]) > \delta$ for $n \geq M_{1}$.  Finally, the proof of \cite[Lemma~3.14]{U3} provides the existence of an $M_{2}$ such that if $\frakg'_{\pm}(\gamma) > 0$, then $\frakg'_{\pm}([f^{n}_{\pm}(\gamma)]) > \frakg'_{\pm}(\gamma)$ for all $n \geq M_{2}$.  Hence for $M = M_{0}M_{1} + M_{2}$ we have that the first conclusion of the alternative holds.

The second conclusion of the alternative follows from the proof of~\cite[Lemma~3.16]{U3} as well.  Indeed, in this lemma, it is shown that  for each $0 < \delta' < 1$ there is a $\lambda > 0$ such that if $\frakg_{\pm}(\gamma) \geq \delta'$ where $\gamma$ is a path in $G_{\pm}'$ then $|f_{\pm}^{n}(\gamma)| \geq 2^{n}\lambda|\gamma|$.  The argument now proceeds like above using a possibly larger $M$.
\end{proof}

Combining the two previous statements, we can show north-south dynamics on $\PCurr(\FN)$ outside of a neighborhood of the fixed point $[\eta_{g}]$. 

\begin{proposition}\label{prop:backandforth}
Under the standing assumption \ref{stand}, given open neighborhoods $U_{\pm}$ of $\Delta_{\pm}$ and $W$ of $[\eta_{g}]$ there is an $M > 0$ such that for any rational current $[\eta_{h}]\in \PCurr(\FN) - W$, either $\varphi^{n}[\eta_{h}]\in U_{+}$ or $\varphi^{-n}[\eta_{h}]\in U_-$ for all $n\ge M$.\end{proposition}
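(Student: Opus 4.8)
The plan is to combine Lemma~\ref{lem:backandforthlowergoodness} with the two parts of Proposition~\ref{prop:goodness properties} and a uniform control on the ``weight'' $|\gamma_h^{\pm}|'$ of the non-$e$ part of a current relative to its full length, valid away from the neighborhood $W$ of $[\eta_g]$. First I would fix the open neighborhoods $U_{\pm}$ of $\Delta_{\pm}$ and $W$ of $[\eta_g]$. The crucial observation is that if $[\eta_h] \notin W$, then a definite proportion of $\gamma_h^{+}$ lies in $G'_{+}$: otherwise a subsequence of the $[\eta_h]$ would converge projectively to a current supported on the loop $e_{+}$, i.e., to $[\eta_g]$, contradicting $[\eta_h] \in \PCurr(\FN) - W$. (Here one uses Lemma~\ref{lem:topology} and compactness of $\PCurr(\FN)$, together with the splitting $\gamma_h^{+} = \alpha_0 \cdot e_+^{k_1} \cdot \alpha_1 \cdots e_+^{k_m} \cdot \alpha_m$.) So there is a constant $c = c(W) > 0$ such that $|\gamma_h^{+}|'/|\gamma_h^{+}| \geq c$ for all $h$ with $[\eta_h] \notin W$, and the same for $\gamma_h^{-}$.

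Next, apply Proposition~\ref{prop:goodness properties}\eqref{GP:stronger} with the given $U_{+}$: this yields $0 < \delta < 1$ and a threshold so that $\varphi^n[\eta_h] \in U_{+}$ for all large $n$ provided $\frakg'_{+}(\gamma_h^{+})\,|\gamma_h^{+}|'/|\gamma_h^{+}| > \delta$. Do the same for $U_{-}$ to get a $\delta_{-}$, and set $\delta_0 = \max\{\delta/c,\, \delta_{-}/c\} $ (shrinking so that $\delta_0 < 1$, which is possible since we may also enlarge $\delta, \delta_{-}$ toward $1$ and then take $c$ into account — if $\delta_0 \geq 1$ one replaces it by something $< 1$ after noting that goodness is bounded by $1$ and re-chooses; the point is only that we need $\frakg'_{+}([f_+^{n}(\gamma_h^{+})]) > \delta_0$ to force the good ratio). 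Then feed $\delta_0$ and $K = 1$ into Lemma~\ref{lem:backandforthlowergoodness}: there is an $M_1$ such that for every $h$ not conjugate to a power of $g$, for all $n \geq M_1$, either $\frakg'_{+}([f_+^{n}(\gamma_h^{+})]) > \delta_0$ or $\frakg'_{-}([f_-^{n}(\gamma_h^{-})]) > \delta_0$. Observe $|[f_\pm^n(\gamma_h^\pm)]|' / |[f_\pm^n(\gamma_h^\pm)]| \geq |[f_\pm^n(\gamma_h^\pm)]|'/ |[f_\pm^n(\gamma_h^\pm)]|$ — more usefully, since $f_\pm$ fixes $e_\pm$ and preserves $G'_\pm$, the circuit $[f_+^n(\gamma_h^+)]$ represents $\varphi^n[h]$ and one has $[f_+^n(\gamma_h^+)] = \gamma^+_{\varphi^n h}$, so that $\frakg'_{+}(\gamma^+_{\varphi^n h}) > \delta_0$; combined with the lower bound $|\gamma^+_{\varphi^n h}|'/|\gamma^+_{\varphi^n h}| \geq c$ valid whenever $[\eta_{\varphi^n h}]=\varphi^n[\eta_h] \notin W$ — which we may assume, since if $\varphi^n[\eta_h] \in W$ we are free to argue on the other side, or note $W$ is a neighborhood and handle it directly — we get $\frakg'_{+}(\gamma^+_{\varphi^n h})\,|\gamma^+_{\varphi^n h}|'/|\gamma^+_{\varphi^n h}| > c\delta_0 \geq \delta$, so Proposition~\ref{prop:goodness properties}\eqref{GP:stronger} applies to $\varphi^n[\eta_h]$ and a bounded number of further iterates pushes it into $U_{+}$; by the semigroup property of the dynamics this gives $\varphi^m[\eta_h] \in U_{+}$ for all $m$ past some uniform $M$. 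Symmetrically in the minus case. Taking the maximum of the constants produced gives the uniform $M$ in the statement.

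The main obstacle I anticipate is bookkeeping the interaction between the three ``thresholds'': the $\delta$ coming out of Proposition~\ref{prop:goodness properties}\eqref{GP:stronger}, the $c$ measuring how much of $[\eta_h]$ escapes the loop $e$ outside of $W$, and the $\delta_0$ needed as input to Lemma~\ref{lem:backandforthlowergoodness}. One must arrange $c \delta_0 > \delta$ (and likewise $c\delta_0 > \delta_{-}$) while keeping $\delta_0 < 1$; this is where a clean statement of the ``escape from $W$'' estimate matters, and one should state and prove it as a short separate lemma (via Lemma~\ref{lem:topology} and compactness) rather than inline. A secondary subtlety is that Lemma~\ref{lem:backandforthlowergoodness} a priori gives the dichotomy at a fixed large $n$ for each $h$, but the ``either/or'' could in principle flip between the plus and minus alternatives as $h$ ranges; this is fine because we only need, for each individual $h$, one of the two conclusions to hold, and once it holds at some $n \geq M$ the forward (resp. backward) dynamics of $\varphi$ near $\Delta_{+}$ (resp. $\Delta_{-}$) keeps it there. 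Finally one should double-check that the finitely many $h$ with $[\eta_h] \in W$ at the relevant scale, or conjugate to a power of $g$, are excluded by hypothesis or fall trivially under the statement, since the proposition is only asserted for $[\eta_h] \in \PCurr(\FN) - W$.
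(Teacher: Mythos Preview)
Your overall plan---a compactness argument giving a lower bound $|\gamma_h^{\pm}|'/|\gamma_h^{\pm}| \geq c$ outside $W$, then Lemma~\ref{lem:backandforthlowergoodness}, then Proposition~\ref{prop:goodness properties}\eqref{GP:stronger}---is exactly the paper's. But the bookkeeping problem you flagged is a genuine gap, not a technicality: you cannot arrange $c\delta_0 > \delta$ with $\delta_0 < 1$, because $c$ is determined solely by $W$ and may be arbitrarily small, while $\delta$ is dictated by $U_\pm$. Your attempted rescue, reapplying the $c$-bound to the iterated current $\varphi^n[\eta_h]$, fails for the reason you yourself note: nothing prevents $\varphi^n[\eta_h]$ from drifting into $W$, and ``argue on the other side'' is unavailable once the dichotomy in Lemma~\ref{lem:backandforthlowergoodness} has committed you to a sign for that particular $h$.

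The missing idea is to exploit the \emph{second} conclusion of Lemma~\ref{lem:backandforthlowergoodness}, the growth estimate $|[f_+^{n}(\gamma_h^+)]|' \geq K|\gamma_h^+|'$, with $K$ chosen large rather than $K=1$. Because $f_+$ fixes $e_+$, the number of $e_+$-edges is preserved under iteration, so
\[
\frac{|[f_+^{n}(\gamma_h^+)]|'}{|[f_+^{n}(\gamma_h^+)]|}
= \frac{|[f_+^{n}(\gamma_h^+)]|'}{|[f_+^{n}(\gamma_h^+)]|' + \I{e_+,\gamma_h^+}}
\geq \frac{1}{1 + \dfrac{\I{e_+,\gamma_h^+}}{K|\gamma_h^+|'}}
\geq \frac{K}{K + 1/c},
\]
using $\I{e_+,\gamma_h^+}/|\gamma_h^+|' \leq (1-c)/c < 1/c$. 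The right-hand side tends to $1$ as $K\to\infty$, so for $K$ large it exceeds, say, $\sqrt{\delta_0}$; pairing this with $\frakg'_+([f_+^{n}(\gamma_h^+)]) > \sqrt{\delta_0}$ from the first conclusion of the lemma yields the product $> \delta_0$ required by Proposition~\ref{prop:goodness properties}\eqref{GP:stronger}. This is precisely how the paper closes the argument (with $c$ written as $s$ and $\delta = \sqrt{\delta_0}$), and it dissolves both obstacles you identified: there is no constraint forcing $\delta_0$ close to $1$, and one never needs to know where $\varphi^n[\eta_h]$ sits relative to $W$.
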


\begin{proof}
To begin, we observe that $\frac{\langle e_{\pm}, \mu\rangle}{|\mu|}=1$ if and only if $[\mu]=[\eta_{g}]$.  Hence by continuity of $\I{e_{+},\param}$ and compactness of $\PCurr(\FN)$, there is an $0 < s < 1$ such that $\frac{\langle e_{\pm}, \mu\rangle}{|\mu|} \leq 1 - s$ for $[\mu] \notin W$.  

Let $0 < \delta_{0} < 1$ and $M_{0}$ be the maximum of constants from Proposition~\ref{prop:goodness properties}\eqref{GP:stronger} using both $U_{+}$ and $U_{-}$.  Set $\delta = \sqrt{\delta_{0}}$ and $K > 1$ large enough so that $\frac{K}{K + 1/s} > \sqrt{\delta_{0}}$.  Finally, let $M_{1}$ be the constant from Lemma~\ref{lem:backandforthlowergoodness} using these constants.  Suppose that $[\eta_{h}] \notin W$ and without loss of generality assume that the first alternative of Lemma~\ref{lem:backandforthlowergoodness} holds for $h$.  As $|\gamma_{h}^{+}| = |\gamma_{h}^{+}|' + \I{e_{+},\gamma_{h}^{+}}$ we get $|\gamma_{h}^{+}|'/|\gamma_{h}^{+}| \geq s$ and so $\frac{\I{e_{+},\gamma_{h}^{+}}}{|\gamma_{h}^{+}|'} \leq \frac{1-s}{s} < \frac{1}{s}$. 

Therefore we find:
\begin{align*}
\dfrac{|[f^{M_{1}}_{+}(\gamma_{h}^{+})]|'}{|[f^{M_{1}}_{+}(\gamma_{h}^{+})]|} & = 
\dfrac{|[f^{M_{1}}_{+}(\gamma_{h}^{+})]|'}{|[f^{M_{1}}_{+}(\gamma_{h}^{+})]|' + \I{e_{+},\gamma_{h}^{+}}} = \dfrac{1}{1 + \dfrac{\I{e_{+},\gamma_{h}^{+}}}{|[f^{M_{1}}_{+}(\gamma_{h}^{+})]|'}} \\
& \geq \dfrac{1}{1 + \dfrac{\I{e_{+},\gamma_{h}^{+}}}{K|\gamma_{h}^{+}|'}}
\geq \dfrac{1}{1 + \dfrac{1}{Ks}} = \frac{K}{K + 1/s} > \sqrt{\delta_{0}}.
\end{align*} 
And thus:
\begin{equation*}
\dfrac{\frakg'_{+}([f_{+}^{M_{1}}(\gamma_{h}^{+})])|[f^{M_{1}}_{+}(\gamma_{h}^{+})]|'}{|[f^{M_{1}}_{+}(\gamma_{h}^{+})]|} > \delta \sqrt{\delta_{0}} = \delta_{0}.
\end{equation*}
Hence by Proposition~\ref{prop:goodness properties}\eqref{GP:stronger} we have $\varphi^{n}[\eta_{h}] \in U_{+}$ for $n \geq M = M_{0} + M_{1}$.
\end{proof}

In order to promote Proposition~\ref{prop:backandforth} to generalized north-south dynamics everywhere, we need to know that there are contracting neighborhoods.  This is content of the next two lemmas and where we need the notion of completely split goodness for currents and Lemma~\ref{generalizedgoodnessproperties}.   We have one lemma dealing with neighborhoods of $\Delta_{\pm}$ and one lemma for neighborhoods of $\hDelta_{\pm}$.

\begin{lemma}\label{lem:nbhds}
Under the standing assumption \ref{stand}, given open neighborhoods $U_{\pm}$ of $\Delta_{\pm}$ there are open neighborhoods $U_{\pm}' \subseteq U_{\pm}$ of $\Delta_{\pm}$ and such that $\varphi^{\pm 1}(U_{\pm}') \subseteq U_{\pm}'$.  
\end{lemma}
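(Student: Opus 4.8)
\textbf{Proof plan for Lemma~\ref{lem:nbhds}.}

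The plan is to build the desired forward-invariant neighborhoods using the north-south dynamics of $\varphi\big|_{A}$ on $\PCurr(A) \subset \PCurr(\FN)$ from Theorem~\ref{dynamicsofhyp}, transported through the completely split goodness function $\overline{\frakg}_{+}$ and the ratio $\frac{|\param|'}{|\param|}$ which detects proximity to $\Delta_{+}$. More precisely, for a parameter $0 < \delta < 1$ I would consider the set
\begin{equation*}
U_{+}(\delta) = \left\{ [\mu] \in \PCurr(\FN) \ \middle|\ \overline{\frakg}_{+}(\mu)\,\frac{|\mu|'}{|\mu|} > \delta \right\} \cup W_{+},
\end{equation*}
where $W_{+}$ is a small open neighborhood of $\Delta_{+}$ chosen so that the union is open (the point being that on $\Delta_{+}$ itself one has $|\mu|' = |\mu|$ and $\overline{\frakg}_{+}(\mu) = 1$ by Lemma~\ref{generalizedgoodnessproperties}, since currents in $\Delta_{+}$ are supported on completely split laminar data from the atoroidal $\varphi\big|_{A}$; continuity of $\overline{\frakg}_{+}$ and of the length functions from Lemma~\ref{lem:topology} makes $U_{+}(\delta)$ an open neighborhood of $\Delta_{+}$ for every $\delta < 1$). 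First I would check that these sets shrink to $\Delta_{+}$ as $\delta \to 1$: any current with $\overline{\frakg}_{+}(\mu)\frac{|\mu|'}{|\mu|}$ close to $1$ is, by Proposition~\ref{prop:goodness properties}\eqref{GP:weaker} together with Lemma~\ref{generalizedgoodnessproperties}, close to a current in $\Delta_{+}$ on all short subpaths, hence close to $\Delta_{+}$ in $\PCurr(\FN)$ by Lemma~\ref{lem:topology}; so given $U_{+}$ I can pick $\delta$ with $U_{+}(\delta) \subseteq U_{+}$ and set $U_{+}' = U_{+}(\delta)$.

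The main work is the forward-invariance $\varphi(U_{+}') \subseteq U_{+}'$. For a rational current $[\eta_{h}] \in U_{+}'$ that is not a power of $[\eta_{g}]$, the defining inequality forces (as in the proof of Proposition~\ref{prop:backandforth}) both that $|\gamma_{h}^{+}|'/|\gamma_{h}^{+}|$ is bounded below and that $\frakg'_{+}(\gamma_{h}^{+})$ is bounded below. Then I would invoke the two monotonicity facts extracted from \cite{U3} that are already cited in the proof of Lemma~\ref{lem:backandforthlowergoodness}: the proof of \cite[Lemma~3.14]{U3} gives $\frakg'_{+}([f_{+}(\gamma)]) \geq \frakg'_{+}(\gamma)$ once $\frakg'_{+}(\gamma) > 0$ (after passing to the fixed power of $f_{+}$ already in force), and the proof of \cite[Lemma~3.16]{U3} gives linear expansion $|[f_{+}(\gamma)]|' \geq 2|\gamma|'$ on the part of $\gamma$ in $G'_{+}$ whose goodness is bounded below. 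Since $\I{e_{+},\param}$ is unchanged under $f_{+}$ (the edge $e_{+}$ is fixed), the ratio $\frac{|[f_{+}(\gamma_{h}^{+})]|'}{|[f_{+}(\gamma_{h}^{+})]|} = \frac{1}{1 + \I{e_{+},\gamma_{h}^{+}}/|[f_{+}(\gamma_{h}^{+})]|'}$ strictly increases, exactly as in the displayed computation in Proposition~\ref{prop:backandforth}. Combining, $\overline{\frakg}_{+}(\varphi\eta_{h})\frac{|\varphi\eta_{h}|'}{|\varphi\eta_{h}|} \geq \frakg'_{+}(\gamma_{h}^{+})\frac{|\gamma_{h}^{+}|'}{|\gamma_{h}^{+}|} > \delta$ (using $\overline{\frakg}_{+}(\eta_{h}) \leq \frakg(\gamma_{h}^{+})$-type control from Lemma~\ref{generalizedgoodnessproperties} in the right direction), so $\varphi[\eta_{h}] \in U_{+}'$. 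For $[\eta_{h}]$ a power of $[\eta_{g}]$ there is nothing to check since $[\eta_{g}] \in \Delta_{+}$ is fixed — or rather $[\eta_{g}]$ is the cone point, and one checks directly it stays in $W_{+}$. The case of a general current follows because rational currents are dense and the conditions defining $U_{+}'$ are closed-under-limit on the complement of an open set, or more cleanly: one promotes the inequality $\overline{\frakg}_{+}(\varphi\mu)\frac{|\varphi\mu|'}{|\varphi\mu|} \geq \overline{\frakg}_{+}(\mu)\frac{|\mu|'}{|\mu|}$ from rational currents to all currents by continuity of all the functions involved.

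The analogous statement for $\varphi^{-1}$ and $U_{-}'$ is identical with $f_{-}, e_{-}, G'_{-}, \overline{\frakg}_{-}$ in place of the $+$ objects. The step I expect to be the genuine obstacle is verifying that $\overline{\frakg}_{+}$ behaves monotonically under $f_{+}$ on the level of \emph{currents} rather than circuits: the current-level goodness $\overline{\frakg}_{+}$ counts occurrences of minimal completely split blocks of width $2C+1$, and applying $f_{+}$ can both create and, near the boundaries of such blocks, destroy completely split structure, so one must argue that destruction is confined to a bounded-cancellation-sized collar (Lemma~\ref{BCL}) whose contribution is negligible against the linear growth of the interior — this is precisely where Proposition~\ref{growthprop}\eqref{item:good} and the choice $C = \max\{C_{0}+1, C_{f}\}$ are used, and it is the one place where the passage from the path-level lemmas of \cite{U3} to the current-level function $\overline{\frakg}_{+}$ requires care rather than a direct quotation.
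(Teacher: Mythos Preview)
Your overall shape is right—work with the quantity $\overline{\frakg}_{+}(\mu)\frac{|\mu|'}{|\mu|}$, observe it equals $1$ on $\Delta_{+}$, and pass to rationals—but the forward-invariance step has a genuine gap, and the paper's proof avoids it by a trick you are missing.

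The gap is the direction of the inequality in Lemma~\ref{generalizedgoodnessproperties}. That lemma gives $\overline{\frakg}_{+}(\eta_{h}) \le \frakg_{+}(\gamma_{h}^{+})$, which lets you pass \emph{from} a current-level lower bound \emph{to} a path-level lower bound. To show $\varphi[\eta_{h}] \in U_{+}(\delta)$ you need the reverse: a \emph{lower} bound on $\overline{\frakg}_{+}(\varphi\eta_{h})$ coming from path data for $[f_{+}(\gamma_{h}^{+})]$. No such inequality is available; your parenthetical ``in the right direction'' is exactly wrong. Relatedly, the monotonicity facts you cite from \cite{U3} (Lemmas~3.14 and 3.16) give growth of $\frakg'_{+}$ only after some exponent $n \ge M$, not after a single application of $f_{+}$, so even at the path level you do not get one-step invariance. (A side error: $[\eta_{g}] \notin \Delta_{+}$, since $\I{e_{+},\eta_{g}}/|\eta_{g}|=1$ while this ratio vanishes on $\Delta_{+}$; but this is harmless because your $U_{+}(\delta)$ does not contain $[\eta_{g}]$ anyway.)

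The paper sidesteps this entirely. It first produces, exactly as you do, a neighborhood $U_{+}^{0}$ on which $\overline{\frakg}_{+}(\mu)>\delta$ and $\I{e_{+},\mu}/|\mu|<s$, and converts these into the \emph{path} bound $\frakg'_{+}(\gamma_{h}^{+})>\delta-s$ via Lemma~\ref{generalizedgoodnessproperties} (correct direction). Then, instead of trying to return to a lower bound on $\overline{\frakg}_{+}$, it invokes Proposition~\ref{prop:goodness properties}\eqref{GP:stronger}: there is an $M$ such that $\varphi^{M}[\eta_{h}] \in U_{+}^{0}$ whenever the path-level quantity is large enough. This only yields $\varphi^{M}(U_{+}^{0}) \subseteq U_{+}^{0}$, not invariance under $\varphi$ itself. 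The final move is to set
\[
U_{+}' \;=\; U_{+}^{0} \cap \varphi(U_{+}^{0}) \cap \cdots \cap \varphi^{M-1}(U_{+}^{0}),
\]
which is still an open neighborhood of $\Delta_{+}$ (since $\varphi(\Delta_{+})=\Delta_{+}$) and is now $\varphi$-invariant by construction. This intersection trick is the missing idea in your plan: it converts ``invariant under some power'' into ``invariant under $\varphi$'' for free, and eliminates any need for one-step monotonicity of $\overline{\frakg}_{+}$ on currents.
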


\begin{proof}
We first observe that for any point in $[\mu]\in \Delta_{+}$, the completely split goodness $\overline{\frakg}_{+}([\mu]) = 1$. This is because any such point is a linear combination of extremal points and extremal points are defined using limits of edges \cite[Proposition 3.3 and Definition 3.5]{U3}, and as $[f^{n}(e)]$ is completely split for all $n\ge1$.  Likewise $\overline{\frakg}_{-}([\mu]) = 1$ for any $[\mu] \in \Delta_{-}$.

Using these observations the conclusion of the lemma follows from the proofs of Lemma~\ref{lem:backandforthlowergoodness} and Proposition~\ref{prop:backandforth}.  To begin, given a neighborhood $U_{+}$ of $\Delta_{+}$ pick a neighborhood $U_{+}^{0} \subset U_{+}$ such that for all $[\mu]\in U_{+}^{0}$ we have $\overline{\frakg}(\mu) > \delta$ and $\frac{\langle e_{+}, \mu\rangle}{|\mu|} < s$ for some $ \delta > s >0$.  Let $0 < \delta_{0} < 1$ and $M_{0}$ be the constants from Proposition~\ref{prop:goodness properties}\eqref{GP:stronger} for $U_{+}^{0}$.  

Given $[\eta_{h}] \in U^{0}_{+}$ we find using Lemma~\ref{generalizedgoodnessproperties}:
\begin{align*}
\frakg'_{+}(\gamma_{h}^{+}) & \geq \frakg'_{+}(\gamma_{h}^{+})\frac{|\gamma_{h}^{+}|'}{|\gamma_{h}^{+}|} = \frakg_{+}(\gamma_{h}^{+}) - \frac{\I{e_{+},\gamma_{h}^{+}}}{|\gamma^{+}_{h}|} \\
& \geq \overline{\frakg}_{+}(\eta_{h}) - \frac{\I{e_{+},\eta_{h}}}{|\eta_{h}|} > \delta - s.
\end{align*}
As mentioned in the proof of Lemma~\ref{lem:backandforthlowergoodness}, there is now an $M_{1}$ such that $\frakg'_{+}([f_{+}^{n}(\gamma_{h}^{+})]) > \sqrt{\delta_{0}}$ for all $n \geq M_{1}$.  Combining now with the proof of Proposition~\ref{prop:backandforth},
for a slightly larger $M_{1}$, we have that $\frac{|[f_{+}^{n}(\gamma_{h}^{+})]|'}{|[f_{+}^{n}(\gamma_{h}^{+})]|} > \sqrt{\delta_{0}}$ as well for $n \geq M_{1}$.  By choice of $\delta_{0}$, this shows $\varphi^{M}[\eta_{h}] \in U^{0}_{+}$ for $M = M_{0} + M_{1}$ and for any rational current $[\eta_{h}] \in U_{+}^{0}$.  As rational currents are dense, we get $\varphi^{M}(U_{+}^{0}) \subseteq U_{+}^{0}$.  

Now set:
\begin{equation*}
U_{+}' = U_{+}^{0} \cap \varphi(U_{+}^{0}) \cap \cdots \cap \varphi^{M-1}(U_{+}^{0}).
\end{equation*} 
As $\varphi(\Delta_{+}) = \Delta_{+}$, $U_{+}'$ is a neighborhood of $\Delta$.  Clearly $U_{+}' \subseteq U_{+}^{0} \subseteq U_{+}$ and $\varphi(U_{+}') \subseteq U_{+}'$ by construction.

A symmetric argument works for a neighborhood of $\Delta_{-}$.
\end{proof}

\begin{lemma}\label{lem:hnbhds} 
Under the standing assumption \ref{stand}, given open neighborhoods $\hV_{\pm}$ of $\hDelta_{\pm}$ there are open neighborhoods $\hV_{\pm}' \subseteq \hV_{\pm}$ of $\hDelta_{\pm}$ such that $\varphi^{\pm 1}(\hV_{\pm}' )\subseteq \hV_{\pm}'$.  
\end{lemma}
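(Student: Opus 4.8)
The plan is to mimic the proof of Lemma~\ref{lem:nbhds}, with the cones $\hDelta_{\pm}$ playing the role of the simplices $\Delta_{\pm}$. The first step is to record that $\overline{\frakg}_{\pm}$ is identically $1$ on $\hDelta_{\pm}$: since $e_{+}$ is a fixed loop edge of $G_{+}$, the circuit consisting of the single edge $e_{+}$ is completely split (an edge of an irreducible $\NEG$ stratum is a splitting unit), so $\overline{\frakg}_{+}(\eta_{g}) = 1$ by Lemma~\ref{generalizedgoodnessproperties}(1); combined with the fact that $\overline{\frakg}_{+} = 1$ on $\Delta_{+}$ (noted in the proof of Lemma~\ref{lem:nbhds}) and the observation that $\mu \mapsto |\mu|\,\overline{\frakg}_{+}(\mu) = \sum_{\gamma}\I{\gamma,\mu}|\check\gamma|$ and $\mu \mapsto |\mu|$ are both linear, it follows that $\overline{\frakg}_{+}(t\eta_{g} + (1-t)\mu_{+}) = 1$ for every $[\mu_{+}] \in \Delta_{+}$ and $t \in [0,1]$. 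The same holds for $\overline{\frakg}_{-}$ on $\hDelta_{-}$. I also note $\varphi(\hDelta_{+}) = \hDelta_{+}$ and $\varphi\inv(\hDelta_{-}) = \hDelta_{-}$, since $\varphi$ fixes $[\eta_{g}]$ and preserves $\Delta_{\pm}$.

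Given $\hV_{+}$, I would then use continuity of $\overline{\frakg}_{+}$ and of $[\mu] \mapsto \I{e_{+},\mu}/|\mu|$ together with compactness of $\hDelta_{+}$ to pick a neighborhood $\hV_{+}^{0} \subseteq \hV_{+}$ of $\hDelta_{+}$ on which $\overline{\frakg}_{+} > \delta$ for some $\delta$ close to $1$. For a rational current $[\eta_{h}] \in \hV_{+}^{0}$ write $x = \I{e_{+},\eta_{h}}/|\eta_{h}|$ and split into two cases. If $x \le 1 - \epsilon_{0}$ (bounded away from the cone point $[\eta_{g}]$), then using that $e_{+}$ is a fixed edge, every occurrence of $e_{+}$ in $\gamma_{h}^{+}$ lies in a completely split segment, so $\frakg_{+}(\gamma_{h}^{+})|\gamma_{h}^{+}| = \I{e_{+},\gamma_{h}^{+}} + \frakg'_{+}(\gamma_{h}^{+})|\gamma_{h}^{+}|'$; combined with $\frakg_{+}(\gamma_{h}^{+}) \ge \overline{\frakg}_{+}(\eta_{h}) > \delta$ (Lemma~\ref{generalizedgoodnessproperties}(2)) this gives $\frakg'_{+}(\gamma_{h}^{+}) = (\frakg_{+}(\gamma_{h}^{+}) - x)/(1 - x) \ge (\delta - x)/(1 - x)$, which is close to $1$ for $\delta$ close to $1$. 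The proofs of Lemma~\ref{lem:backandforthlowergoodness} and Proposition~\ref{prop:goodness properties}(2) then push the $G'_{+}$-profile of $[f_{+}^{n}(\gamma_{h}^{+})]$ into any prescribed neighborhood of $\Delta_{+}$; since $\I{e_{+},[f_{+}^{n}(\gamma_{h}^{+})]} = \I{e_{+},\gamma_{h}^{+}}$ is constant in $n$ while the total length grows, $\varphi^{n}[\eta_{h}]$ stays in a thin tube about $\hDelta_{+}$ and converges to $\Delta_{+}$, hence stays in $\hV_{+}^{0}$ once $\hV_{+}^{0}$ is taken thin enough. If instead $x > 1 - \epsilon_{0}$, then $[\eta_{h}]$ lies in a small neighborhood $W$ of $[\eta_{g}]$, and I would argue differently: decompose the measure as $\eta_{h} = c\,\eta_{g} + \rho$, where $\rho$ gives zero mass to the $\FN$-orbit of the $e_{+}$-geodesic; as $\varphi$ fixes $[g]$ and $[A]$ it preserves both this orbit and $\partial^{2}A$, so $\varphi^{n}\eta_{h} = c\,\eta_{g} + \varphi^{n}\rho$. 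Feeding $\rho$ into the forward-invariant neighborhood $U_{+}^{A}$ of $\Delta_{+}$ from Lemma~\ref{lem:nbhds} applied to $\varphi\big|_{A}$, and using Lemma~\ref{lem:dynamics in simplex} for the behavior of $|\varphi^{n}\rho|$, after a bounded number of steps $[\varphi^{n}\rho]$ lies in $U_{+}^{A}$ and stays there (or $[\rho]$ is already near $\Delta_{-}$, in which case $|\varphi^{n}\rho|$ never grows and $[\varphi^{n}\eta_{h}]$ hovers near $[\eta_{g}]$); in all cases $c\,\eta_{g} + \varphi^{n}\rho$ is of the form ``$\eta_{g}$ plus something near $\Delta_{+}$'', hence near $\hDelta_{+}$, and shrinking $W$ absorbs the finitely many exceptional initial steps.

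Combining the two cases and density of rational currents, I would obtain a fixed $M$ with $\varphi^{M}(\hV_{+}^{0}) \subseteq \hV_{+}^{0}$, and then $\hV_{+}' = \hV_{+}^{0} \cap \varphi(\hV_{+}^{0}) \cap \cdots \cap \varphi^{M-1}(\hV_{+}^{0})$ is a neighborhood of $\hDelta_{+}$ (using $\varphi(\hDelta_{+}) = \hDelta_{+}$) with $\varphi(\hV_{+}') \subseteq \hV_{+}'$; the symmetric argument with $\varphi\inv$, $f_{-}$, $e_{-}$, $\overline{\frakg}_{-}$ handles $\hDelta_{-}$. The step I expect to be the main obstacle is the transitional regime near the cone point $[\eta_{g}]$ in the second case: making \emph{uniform} over all rational currents in the fixed neighborhood $\hV_{+}^{0}$ the comparison between the (exponential) growth of the $A$-part $\varphi^{n}\rho$ and the rate at which the north--south dynamics of $\varphi\big|_{A}$ drags it into $U_{+}^{A}$, so that the intermediate mixed currents never escape the chosen tube about $\hDelta_{+}$. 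The remaining estimates are transcriptions of those already carried out for Lemma~\ref{lem:nbhds} and Proposition~\ref{prop:backandforth}.
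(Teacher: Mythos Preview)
Your first case ($x \le 1-\epsilon_{0}$) is essentially the paper's first case and is fine. The gap is in your second case, near the cone point $[\eta_{g}]$.

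The decomposition $\eta_{h} = c\,\eta_{g} + \rho$ you propose does not do what you want. For a rational current $\eta_{h}$ with $h$ not conjugate to a power of $g$, the support of $\eta_{h}$ is the $\FN$--orbit of the axis of $h$, which is disjoint from the $\FN$--orbit of the $g$--axis; hence the restriction of $\eta_{h}$ to that orbit is zero, i.e.\ $c=0$ and $\rho = \eta_{h}$. In particular $\rho$ is \emph{not} a current in $\iota_{A}(\Curr(A))$, so you cannot feed it into the north--south dynamics of $\varphi\big|_{A}$ on $\PCurr(A)$ or into Lemma~\ref{lem:nbhds} applied to $A$. The quantity $\I{e_{+},\eta_{h}}$ being large reflects how often the circuit $\gamma_{h}^{+}$ crosses $e_{+}$, not the presence of any atomic $\eta_{g}$--component inside the measure $\eta_{h}$. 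So the whole ``$\eta_{g}$ plus something near $\Delta_{+}$'' picture collapses for rational currents, which are precisely the ones you need for the density argument.

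The paper handles this regime without any such decomposition. It chooses $\hV_{+}'$ to be an explicit weak--$*$ neighborhood $N_{G_{+}}(\hDelta_{+},\widehat\calP_{+}(L),\epsilon)$ built only from occurrences of paths in $G'_{+}$ together with $e_{+}$, and then invokes a dichotomy coming from \cite[Proposition~4.18]{LU2}: for a uniform $M$, either $\frakg'_{+}([f_{+}^{M}(\gamma_{h}^{+})]) > \delta_{0}$, or $|[f_{+}^{M}(\gamma_{h}^{+})]|' < \tfrac{1}{R}|\gamma_{h}^{+}|'$. In the first sub-case Proposition~\ref{prop:goodness properties}\eqref{GP:weaker} places $\varphi^{M}[\eta_{h}]$ near $[t\eta_{g} + (1-t)\mu]$ for some $[\mu] \in \Delta_{+}$ and the value $t = \I{e_{+},[f_{+}^{M}(\gamma_{h}^{+})]}/|[f_{+}^{M}(\gamma_{h}^{+})]|$; in the second sub-case the $G'_{+}$--length has shrunk enough that a direct estimate shows $\varphi^{M}[\eta_{h}]$ lies in $N_{G_{+}}([\eta_{g}],\widehat\calP_{+}(L),\epsilon)$. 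Either way one lands back in $\hV_{+}'$. This dichotomy is exactly the missing ingredient that replaces your attempted measure splitting.
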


\begin{proof}
Given $[\mu] \in \PCurr(\FN)$, a collection of reduced edge paths $\calP$ in some marked graph $G$ and an $\epsilon > 0$ determines an open neighborhood of $[\mu]$ in $\PCurr(\FN)$:
\begin{equation*}
N_{G}([\mu],\calP,\epsilon) = \left\{ [\nu] \in \PCurr(\FN) \mid \abs{\frac{\I{\gamma,\nu}}{\wght{\nu}} - \frac{\I{\gamma,\mu}}{\wght{\mu}}} < \epsilon, \, \forall \gamma \in \calP \right\}.
\end{equation*} 
For a subset $X \subseteq \PCurr(\FN)$, we define $N_{G}(X,\calP,\epsilon)$ as the union of $N_{G}([\mu],\calP,\epsilon)$ over all $[\mu] \in X$.  

By $\calP_{+}(L)$ we denote the set of all reduced edge paths contained in $G'_{+}$ with length at most $L$.  We set $\widehat\calP_{+}(L) = \calP_{+}(L) \cup \{e_{+}\}$.  We have
\begin{equation*}
\underset{L \to \infty, \, \epsilon \to 0}{\bigcap} N_{G_{+}}(\hDelta_{+},\widehat\calP_{+}(L), \epsilon) = \hDelta_{+}.
\end{equation*}
This follows as for any $[\mu] \in \Delta_{+}$, $\I{\gamma,\mu} = 0$ for any reduced edge path not contained in $G'_{+}$ and as $[\mu]=[\eta_{g}]$ if and only if $\I{e_{+}, \mu} = \wght{\mu}$.  There is a similar statement for $\hDelta_{-}$.  

Let $L$ and $\epsilon$ be such that $N_{G_{+}}(\hDelta_{+},\widehat\calP_{+}(L), \epsilon) \subseteq \hV_{+}$.  Let $\delta_{0}$ and $M_{0}$ be the constants from Proposition~\ref{prop:goodness properties}\eqref{GP:weaker} using this $L$ and $\epsilon$.  Set $\hV_{+}' = N_{G_{+}}(\hDelta_{+},\widehat\calP_{+}(L), \epsilon)$ and let $0 < \delta' < 1$ be such that $\overline{\frakg}(\mu) > \delta'$ for $[\mu] \in \hV'_{+}$.  By replacing $\delta_{0}$ with a smaller positive number and $M_{0}$ with a larger constant, we can assume that $\delta_{0}$ and $M_{0}$ also satisfy the conclusion of Proposition~\ref{prop:goodness properties}\eqref{GP:stronger} for the neighborhood $\hV'_{+}$ as well.

We will now show that there is a constant $M$ such that for any rational current $[\eta_{h}] \in \hV_{+}'$ we have $\varphi^{M}[\eta_{h}] \in \hV_{+}'$.  Arguing as in Lemma~\ref{lem:nbhds} the present lemma follows.  There are two cases: $\gamma_{h}^{+}$ has a definite fraction in $G'_{+}$; or not, i.e., $[\eta_{h}]$ is close to $[\eta_{g}]$.

The first case is similar to Lemma~\ref{lem:nbhds}.  Fix an $0 < s < \delta'$.  If $[\mu] \in \hV'_{+}$ and $\frac{\I{e_{+},\mu}}{|\mu|} < s$, then arguing as in Lemma~\ref{lem:nbhds} we have $\frakg'_{+}(\gamma_{h}^{+}) > \delta' - s$ and so there is an $M_{1}$ such that $\frakg'_{+}([f
^{n}(\gamma_{h}^{+})])\frac{|[f_{+}^{n}(\gamma_{h}^{+})]|'}{|[f^{n}_{+}(\gamma_{h}^{+})]|} > \delta_{0}$ and so $\varphi^{n}[\eta_{h}] \in \hV'_{+}$ for all $n \geq M_{0} + M_{1}$.

Thus for the second case we assume that $[\eta_{h}] \in \hV'_{+}$ and $\frac{\I{e_{+},\gamma_{h}^{+}}}{|\gamma_{h}^{+}|} \geq s$.  If $h$ is a power of a conjugate of $g$, then $\varphi([\eta_{h}]) = [\eta_{h}] \in \hV_{+}'$.  Therefore we can assume that $h$ is not a power of a conjugate of $g$.  Hence the path $\gamma^+_h$ intersects $G'_+$ nontrivially and so $|[f^{n}_{+}(\gamma_{h}^{+})]|' \geq 1$ for all $n \geq 0$.

Next we observe that given $\delta>0$ and $R>1$, there is a constant $M_{2}>1$ such that for any reduced path $\alpha$ in $G'_{+}$ which is not a Nielsen path, either $\frakg'_{+}([f_{+}^{M_2}(\alpha)])>\delta$ or $|\alpha|' > R |[f_{+}^{M_2}(\alpha)]|'$. This is the analog of \cite[Proposition~4.18]{LU2}. The idea is that any long enough reduced path $\alpha$ can be subdivided into subpaths of length at most $10C$, and we can find an exponent $M_1$ such that for any reduced edge path $\gamma$ in $G'_{+}$ with $|\gamma|<10C$, the path $[f_{+}^{M_2}(\gamma)]$ is completely split.  This tells that either $[f_{+}^{M_1}(\alpha)]$ has a definite completely split goodness, or the length $|[f_{+}^{M_1}(\alpha)]|$ decreases by a definite amount.  Hence an argument similar to the one in Lemma~\ref{lem:backandforthlowergoodness} tells that the following holds after replacing $M_{1}$ with a possibly larger constant:

For all $h \in \FN$ not conjugate to $g$, we have either: 

\begin{enumerate} 

\item $\frakg'_{+}([f_{+}^{M_1}(\gamma^{+}_h)])>\delta_{0}$; or \label{highreversegood}
\item $ |f^{M_1}_{+}(\gamma^{+}_h)|' < \dfrac{1}{R}|\gamma_h^{+}|'$ \label{lengthdecreases}

\end{enumerate}
where $\frac{1}{1 + Rs} < \epsilon$ and $\frac{R}{R + 1/s} > 1-\epsilon$.  Set $M = M_{0} + M_{1}$.


First assume that \eqref{highreversegood} holds for $h$.  Set $t = \I{e_{+},[f_{+}^{M}(\gamma_{h}^{+})]}/|[f_{+}^{M}(\gamma_{h}^{+})]|$.   As $h$ is not a power of a conjugate of $g$ we have that $0 \leq t < 1$.  As $\frakg'_{+}([f_{+}^{M_{1}}(\gamma_{h}^{+})]) > \delta_{0}$, there is a current $[\mu] \in \Delta_{+}$ satisfying the inequality in Proposition~\ref{prop:goodness properties}\eqref{GP:weaker} for $f^{M}_{+}(\gamma_{h}^{+})$.  We normalize $\mu$ so that $|\mu| = 1$.  With our normalization, we have that $|t\eta_{g} + (1-t)\mu| = 1$ as well.    We claim that $\varphi^{M}[\eta_{h}] \in N_{G_{+}}([t\eta_{g} + (1-t)\mu],\widehat\calP_{+}(L),\epsilon) \subseteq \hV'_{+}$.  

For a path $\alpha \in \calP_{+}(L)$ we have $\I{\alpha,\eta_{g}} = 0$, $|[f^{M}_{+}(\gamma_{h}^{+})]|' = |[f^{M}_{+}(\gamma_{h}^{+})]|(1-t)$ and so:
\begin{align*}
\bigg|\frac{\I{\alpha,[f_{+}^{M}(\gamma_{h}^{+})]}}{|[f_{+}^{M}(\gamma_{h}^{+})]|} - \I{\alpha,t\eta_{g} + \, & (1-t)\mu}\bigg|& \\&= \abs{\frac{\I{\alpha,[f_{+}^{M}(\gamma_{h}^{+})]}(1-t)}{|[f_{+}^{M}(\gamma_{h}^{+})]|(1-t)} - (1-t)\I{\alpha,\mu}} \\
& = \abs{\frac{\I{\alpha,[f_{+}^{M}(\gamma_{h}^{+})]}}{|[f_{+}^{M}(\gamma_{h}^{+})]|'} - \I{\alpha,\mu}}(1-t) \\
& < \epsilon(1-t) \leq \epsilon.
\end{align*}

Also as $\I{e_{+},\mu} = 0$ and $\I{e_{+},\eta_{g}} = 1$ we find:
\begin{align*}
\abs{\frac{\I{e_{+},[f_{+}^{M}(\gamma_{h}^{+})]}}{|[f_{+}^{M}(\gamma_{h}^{+})]|} - \I{e_{+},t\eta_{g} + (1-t)\mu}} &= \abs{t - t\I{e_{+},\eta_{g}}} \\
& = \abs{t - t} = 0.
\end{align*}
This shows $\varphi^{M}[\eta_{h}] \in N_{G_{+}}([t\eta_{g} + (1-t)\mu],\widehat\calP_{+}(L),\epsilon)$ as claimed.

On the other hand if \eqref{highreversegood} fails then \eqref{lengthdecreases} holds for $\gamma_{h}^{+}$ and so $|[f^{M}_{+}(\gamma_{h}^{+})]|' \leq \frac{1}{R}|\gamma_{h}^{+}|'$.  We claim that $\varphi^{M}[\eta_{h}]\in N_{G_{+}}([\eta_{g}],\widehat\calP_{+}(L), \epsilon)$.  Notice that we have $\I{e_{+},[f^{M}_{+}(\gamma_{h}^{+})]} = \I{e_{+},\gamma_{h}^{+}}$ and $\frac{\I{e_{+},\gamma_{h}^{+}}}{|\gamma_{h}^{+}|'} \geq \frac{\I{e_{+},\gamma_{h}^{+}}}{|\gamma_{h}^{+}|} \geq s$.  

For a path $\alpha \in \calP_{+}(L)$ we have $\I{\alpha,[f^{M}_{+}(\gamma_{h}^{+})]} \leq |[f^{M}_{+}(\gamma_{h}^{+})]|'$ and so:
\begin{align*}
0 < \frac{\I{\alpha,[f_{+}^{M}(\gamma_{h}^{+})]}}{|[f_{+}^{M}(\gamma_{h}^{+})]|} &\leq \frac{|[f_{+}^{M}(\gamma_{h}^{+})]|'}{|[f_{+}^{M}(\gamma_{h}^{+})]|'+\I{e_{+},[f_{+}^{M}(\gamma_{h}^{+})]}} \\
& = \frac{1}{1 + \frac{\I{e_{+},\gamma_{h}^{+}}}{|[f_{+}^{M}(\gamma_{h}^{+})]|'}} \leq \frac{1}{1 + \frac{R\I{e_{+},\gamma_{h}^{+}}}{|\gamma_{h}^{+}|'}} \\
&  \leq \frac{1}{1 + Rs} < \epsilon.
\end{align*}
Therefore as $\I{\alpha,\eta_{g}} = 0$ we have:
\begin{equation*}
\abs{\frac{\I{\alpha,[f^{M}_{+}(\gamma_{h}^{+})]}}{|[f^{M}_{+}(\gamma_{h}^{+})]|} - \I{\alpha,\eta_{g}}} < \epsilon.
\end{equation*}

Additionally, we have:
\begin{align*}
1 > \frac{\I{e_{+},[f^{M}_{+}(\gamma_{h}^{+})]}}{|[f^{M}_{+}(\gamma_{h}^{+})]|} &= 
\frac{\I{e_{+},\gamma_{h}^{+}}}{|[f^{M}_{+}(\gamma_{h}^{+})]|} = \frac{\I{e_{+},\gamma_{h}^{+}}}{|[f^{M}_{+}(\gamma_{h}^{+})]|' + \I{e_{+},\gamma_{h}^{+}}} \\
& \geq \frac{\I{e_{+},\gamma_{h}^{+}}}{\frac{1}{R}|\gamma_{h}^{+}|' + \I{e_{+},\gamma_{h}^{+}}} = \frac{R\I{e_{+},\gamma_{h}^{+}}}{|\gamma_{h}^{+}|' + R\I{e_{+},\gamma_{h}^{+}}} \\
& = \frac{R}{R + \frac{|\gamma_{h}^{+}|'}{\I{e_{+},\gamma_{h}^{+}}}} \geq \frac{R}{R + 1/s} > 1 - \epsilon.
\end{align*}
Therefore as $\I{e_{+},\eta_{g}} = 1$ we have:
\begin{equation*}
\abs{\frac{\I{e_{+},[f^{M}_{+}(\gamma_{h}^{+})]}}{|[f^{M}_{+}(\gamma_{h}^{+})]|} - \I{e_{+},\eta_{g}}} < \epsilon.
\end{equation*}
This shows $\varphi^{M}[\eta_{h}] \in N_{G_{+}}([\eta_{g}],\widehat\calP_{+}(L),\epsilon)$ as claimed.
\end{proof}


\subsection{Generalized north-south dynamics for almost atoroidal elements}\label{subsec:gns}

Using the material from the previous two sections, we can now prove the main technical result needed for Theorem~\ref{th:alternative}.

\begin{theorem}\label{th:gns}
Suppose $A < \FN$ is a co-rank $1$ free factor and $\varphi \in \IA_{N}(\ZZ/3) \cap \Out(\FN;A)$ is such that $\varphi\big|_{A}$ is atoroidal.  Let $\Delta_{+}$ and $\Delta_{-}$ be the inclusion to $\PCurr(\FN)$ of the $\varphi$--invariant simplices in $\PCurr(A)$ from Theorem~\ref{dynamicsofhyp} for $\varphi\big|_{A}$.  Assume $\varphi$ is not atoroidal and let $[g]$ be the fixed conjugacy class in $\FN$ given by Proposition~\ref{prop:co rank one atoroidal}\eqref{item:one edge fixed}.  Then $\varphi$ acts on $\PCurr(\FN)$ with generalized north-south dynamics.  Specifically, for the two invariant sets 
\begin{equation*}
\hDelta_{-}=\{[t\eta_{g}+(1-t)\mu_{-}]\mid [\mu_{-}]\in\Delta_{-}, t\in[0,1]\}
\end{equation*}
and 
\begin{equation*}
\hDelta_{+}=\{[t\eta_{g}+(1-t)\mu_{+}]\mid [\mu_{+}]\in\Delta_{+}, t\in[0,1]\},
\end{equation*}
given any open neighborhood $U_{\pm}$ of $\Delta_{\pm}$ in $\PCurr(\FN)$ and open neighborhood $\hV_{\pm}$ of $\hDelta_{\pm}$ in $\PCurr(\FN)$, there is an $M > 0$ such that $\varphi^{\pm n}(\PCurr(\FN) -  \hV_{\mp})\subset U_{\pm}$ for all $n\ge M$. 
\end{theorem}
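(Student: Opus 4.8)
The plan is to assemble the theorem from the ping-pong-type statement in Proposition~\ref{prop:backandforth} together with the two invariance lemmas for nested neighborhoods (Lemma~\ref{lem:nbhds} and Lemma~\ref{lem:hnbhds}). The strategy is the standard bootstrap used to upgrade ``north-south dynamics away from a fixed point'' to ``generalized north-south dynamics everywhere'': first handle rational currents using Proposition~\ref{prop:backandforth}, then promote to all currents using density and the contracting neighborhoods, and finally deal with the one genuinely new feature of this setting, namely that the repelling set $\hDelta_{-}$ is a full cone and a current may fail to escape toward $\Delta_{+}$ only because it is close to $[\eta_{g}]$, which lies in \emph{both} $\hDelta_{+}$ and $\hDelta_{-}$.

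First I would fix open neighborhoods $U_{\pm}$ of $\Delta_{\pm}$ and $\hV_{\pm}$ of $\hDelta_{\pm}$. Shrinking if necessary, apply Lemma~\ref{lem:nbhds} to replace $U_{\pm}$ by forward-invariant neighborhoods $U_{\pm}' \subseteq U_{\pm}$ (with $\varphi^{\pm 1}(U_{\pm}')\subseteq U_{\pm}'$), and apply Lemma~\ref{lem:hnbhds} to replace $\hV_{\pm}$ by forward-invariant neighborhoods $\hV_{\pm}' \subseteq \hV_{\pm}$ (with $\varphi^{\pm 1}(\hV_{\pm}')\subseteq \hV_{\pm}'$). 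Next, choose a small neighborhood $W$ of $[\eta_{g}]$ contained in $\hV_{+}' \cap \hV_{-}'$ and also small enough that $W \subseteq U_{+}' \cap U_{-}'$ is \emph{not} required, but rather that $W$ is contained in both $\hV$'s --- the point is that near $[\eta_g]$ we will use the $\hV$ neighborhoods themselves as the target, not the $U$'s. Then Proposition~\ref{prop:backandforth}, applied with the invariant neighborhoods $U_{\pm}'$ and the neighborhood $W$, furnishes an $M_0$ such that every \emph{rational} current $[\eta_h] \in \PCurr(\FN) - W$ satisfies $\varphi^{n}[\eta_h] \in U_{+}'$ or $\varphi^{-n}[\eta_h] \in U_{-}'$ for all $n \ge M_0$. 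For a rational current $[\eta_h] \in W$: if $h$ is a power of a conjugate of $g$ then $[\eta_h] = [\eta_g] \in \hDelta_+$ is fixed and there is nothing to do; otherwise $[\eta_h] \in W \subseteq \hV_+' \cap \hV_-'$, so by forward-invariance from Lemma~\ref{lem:hnbhds} we get $\varphi^{n}[\eta_h] \in \hV_+'$ and $\varphi^{-n}[\eta_h] \in \hV_-'$ for all $n \ge 0$; in particular $[\eta_h] \notin \PCurr(\FN) - \hV_-$ forces nothing, and when $[\eta_h]$ does lie outside $\hV_-$ we must have already left $W$, returning us to the first case. Combining, after adjusting $M_0$, every rational current outside $\hV_{-}$ is carried into $U_{+}$ by $\varphi^{n}$ for $n \ge M_0$, and symmetrically.

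To pass from rational currents to all of $\PCurr(\FN)$, I would use the density of rational currents together with the openness of $U_{+}'$ and the fact that $\varphi^{M_0}$ is a homeomorphism, exactly as in the proof of Lemma~\ref{lem:nbhds}: the set of $[\mu]$ with $\varphi^{M_0}[\mu] \in U_{+}'$ is open, and if it contains all rational currents in some open set it contains that open set. More carefully, I would argue by contradiction using compactness: if the conclusion failed, there would be a sequence $[\mu_k] \in \PCurr(\FN) - \hV_-$ and exponents $n_k \to \infty$ with $\varphi^{n_k}[\mu_k] \notin U_+$; passing to a subsequence $[\mu_k] \to [\mu_\infty] \notin \hV_-$ (since $\PCurr(\FN) - \hV_-$ is closed, hence compact), and approximating $[\mu_\infty]$ by a rational current $[\eta_h]$ very close to it and still outside $\hV_-$, the already-established statement gives $\varphi^{n}[\eta_h] \in U_+'$ for all large $n$; then continuity of each $\varphi^{n}$ on the compact space, applied for a \emph{fixed} large $n = n(U_+', U_+)$ determined by the invariant nesting $U_+' \subseteq U_+$, propagates this to a neighborhood of $[\eta_h]$ and hence to $[\mu_k]$ for large $k$, and forward-invariance of $U_+'$ carries it along all further iterates, contradicting $\varphi^{n_k}[\mu_k] \notin U_+$. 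The symmetric argument with $\varphi^{-1}$, $\Delta_-$, $\hV_+$ completes the proof.

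\textbf{Main obstacle.} The delicate point is not the abstract ping-pong bootstrap but the interaction at $[\eta_g]$: because $[\eta_g] \in \hDelta_+ \cap \hDelta_-$, a current very close to $[\eta_g]$ cannot be guaranteed to move toward $\Delta_+$ \emph{nor} toward $\Delta_-$ in the $U$-sense, and one must instead be satisfied with it staying in $\hV_+'$ (resp. $\hV_-'$) --- which is exactly why the theorem is stated with the asymmetric target ``$\PCurr(\FN) - \hV_{\mp} \to U_{\pm}$'' rather than the cleaner ``$\to \hV_{\pm}$''. The work is to check that a current outside $\hV_-$ genuinely has a definite fraction of its length outside a neighborhood of $[\eta_g]$, so that Proposition~\ref{prop:backandforth} (whose hypothesis is ``outside a neighborhood $W$ of $[\eta_g]$'') actually applies; this is where one uses that $\frac{\I{e_+,\mu}}{|\mu|} = 1$ characterizes $[\eta_g]$, together with continuity and compactness, to choose $W$ small enough that $\PCurr(\FN) - \hV_- \subseteq \PCurr(\FN) - W$ up to the portion already inside $\hV_-$. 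Once that reduction is in place, the rest is routine topology.
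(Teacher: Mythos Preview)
Your proposal assembles the right ingredients, but there is a genuine gap at the sentence ``Combining, after adjusting $M_0$, every rational current outside $\hV_{-}$ is carried into $U_{+}$.''  Proposition~\ref{prop:backandforth} only gives the \emph{dichotomy} $\varphi^{n}[\eta_h] \in U_{+}'$ \emph{or} $\varphi^{-n}[\eta_h] \in U_{-}'$, and you never explain how to rule out the second alternative when $[\eta_h] \notin \hV_{-}$.  Nothing in your setup prevents a current from lying outside $\hV_{-}$ while its \emph{backward} iterate lands in the small neighborhood $U_{-}'$ of $\Delta_{-}$; the invariance $\varphi^{-1}(U_{-}') \subseteq U_{-}'$ controls only further backward iterates, not the original point.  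Your ``Main obstacle'' paragraph misidentifies the difficulty: ensuring $\PCurr(\FN) - \hV_{-} \subseteq \PCurr(\FN) - W$ is immediate from your own choice $W \subseteq \hV_{-}' \subseteq \hV_{-}$.

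The paper resolves this with one clean move that replaces your entire case analysis.  It applies Proposition~\ref{prop:backandforth} with the choices $U_{-} = W = \hV_{-}$ (legitimate since $\hV_{-}$ is simultaneously a neighborhood of $\Delta_{-} \subset \hDelta_{-}$ and of $[\eta_{g}] \in \hDelta_{-}$), and then feeds the dichotomy not $[\mu]$ but $\varphi^{M}[\mu]$.  Since $\varphi^{-1}(\hV_{-}) \subseteq \hV_{-}$, the complement $\PCurr(\FN) - \hV_{-}$ is $\varphi$--forward invariant, so $\varphi^{M}[\mu] \notin W$; the second alternative then reads $\varphi^{-M}(\varphi^{M}[\mu]) = [\mu] \in \hV_{-}$, contradicting the hypothesis, and one is forced into $\varphi^{2M}[\mu] \in U_{+}$.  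No auxiliary $W$, no case split near $[\eta_{g}]$.

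A secondary issue: your density argument is not correct as stated.  The principle ``an open set containing all rational points of an open set contains that open set'' is false (remove a single irrational point).  Your compactness argument is closer, but ``approximating $[\mu_{\infty}]$ by a rational current \emph{still outside} $\hV_{-}$'' is unjustified when $[\mu_{\infty}]$ lies on the boundary of $\hV_{-}$.  This is repaired by working with a pair of nested invariant neighborhoods $\hV_{-}'' \subset \hV_{-}$ with $\overline{\hV_{-}''} \subset \hV_{-}$ and running the rational argument on the inner one; the paper's proof leaves this step implicit as well.
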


See Figure~\ref{fig:gns set-up} for a schematic of the sets mentioned in Theorem~\ref{th:gns}.

\begin{figure}[h!]	
\centering
\begin{tikzpicture}[every node/.style={inner sep=0pt},scale=0.9]
\draw[very thick] (-5,4) rectangle (5,-3);
\draw[thick,pattern=crosshatch dots,pattern color=black!10!white] (-5,-1) rectangle (5,-3);
\filldraw[black!20!white] (-4,-1) -- (0,2.5) -- (-1,-1) -- cycle;
\filldraw[black!20!white] (4,-1) -- (0,2.5) -- (1,-1) -- cycle;
\draw[very thick] (-4,-1) -- (-1,-1) node[pos=0] (a1) {} node[pos=1] (a2) {};
\draw[very thick] (4,-1) -- (1,-1) node[pos=0] (b1) {} node[pos=1] (b2) {};
\node at (0,2.5) (c) {};
\draw[very thick] (a1) -- (c) -- (a2);
\draw[very thick] (b1) -- (c) -- (b2);
\draw[thick,dashed,blue,rounded corners=25pt] (-5.2,-1.5) -- (0.5,3.4) -- (-0.6,-1.5) -- cycle;
\draw[thick,dashed,red,rounded corners=25pt] (5.2,-1.5) -- (-0.5,3.4) -- (0.6,-1.5) -- cycle;
\draw[thick,dashed,red,rounded corners=5pt] (-4.2,-0.8) -- (-0.8,-0.8) --(-0.8,-1.2) -- (-4.2,-1.2) -- cycle;
\draw[thick,dashed,blue,rounded corners=5pt] (4.2,-0.8) -- (0.8,-0.8) --(0.8,-1.2) -- (4.2,-1.2) -- cycle;
\node at (-2.5,-2) {$\Delta_{+} \subset {\color{red}U_{+}}$};
\node at (-2.5,2.3) {${\hDelta_{+}} \subset {\color{blue}\hV_{+}}$};
\node at (2.5,-2) {$\Delta_{-} \subset {\color{blue}U_{-}}$};
\node at (2.5,2.3) {${\hDelta_{-}} \subset {\color{red}\hV_{-}}$};
\node at (0,3.5) {$[\eta_{g}]$};
\node at (0,-2) {$\PCurr(A)$};
\foreach \a in {a1,a2,b1,b2,c}
	\filldraw (\a) circle [radius=0.075cm];
\end{tikzpicture}
\caption{The set-up of neighborhoods in Theorem~\ref{th:gns}.  For $n \geq M$, the element $\varphi^{n}$ sends the complement of $\hV_{-}$ into $U_{+}$; the element $\varphi^{-n}$ sends the complement of $\hV_{+}$ into $U_{-}$.}\label{fig:gns set-up}
\end{figure}

\begin{proof} 

We replace $\varphi$ by a power so that the results from Section~\ref{subsec:goodness grows} apply.  This is addressed at the end of the proof.

By Lemmas~\ref{lem:nbhds} and \ref{lem:hnbhds} we can assume that $\varphi(U_{+}) \subseteq U_{+}$ and $\hV_{-} \subseteq \varphi(\hV_{-})$.
Let $M$ be the exponent given by Proposition~\ref{prop:backandforth} by using $U_{+} = U_{+}$ and $U_{-} = W = \hV_{-}$. 

For any current
\begin{equation*}
[\mu]\in \varphi^{M}(\PCurr(\FN) - \hV_{-}) = \PCurr(\FN)-\varphi^{M}(\hV_{-}) \subseteq \PCurr(\FN) - W
\end{equation*}
we have $\varphi^{M}[\mu]\in U_{+}$ by Proposition~\ref{prop:backandforth}, as $\varphi^{-M}[\mu]\notin \hV_{-}$. Therefore for any current $[\mu]\in\PCurr(\FN)-\hV_{-}$, we have $\varphi^{2M}[\mu] \in U_{+}$ and hence $\varphi^{2n}[\mu] \in U_{+}$ for all $n \geq M$ as $\varphi(U_{+}) \subseteq U_{+}$.   Therefore, 
\begin{equation*}
\varphi^{2n} (\PCurr(\FN) - \hV_{-}) \subset U_{+}
\end{equation*}
for all $n\ge M$.  A symmetric argument for $\varphi^{-1}$ shows that $\varphi^{2}$ acts with generalized north-south dynamics. We then invoke \cite[Proposition 3.4]{LU2} to deduce that $\varphi$ (and also the original outer automorphism as well) acts with generalized north-south dynamics. 
\end{proof} 

We conclude this section with the analog to Lemma~\ref{lem:dynamics in simplex} regarding the behavior of length under iteration of $\varphi$ that is needed for Theorem~\ref{prop:atoroidal}.  In this statement and its proof, we assume $\varphi \in \Out(\FN)$ satisfies the hypotheses of Theorem~\ref{th:gns} and $\Delta_{\pm}$, $\hDelta_{\pm}$ are the $\varphi$--invariant simplices in $\PCurr(\FN)$ appearing in the statement of that theorem. 

\begin{lemma}\label{lem:growth outside of nbhd}
For each $C > 0$ and neighborhood $\hV \subset \PCurr(\FN)$ of $\hDelta_{-}$ there is a constant $M > 0$ such that if $[\mu] \notin \hV$, then $\wght{\varphi^{n}\mu} \geq C\wght{\mu}$ for all $n \geq M$.
\end{lemma}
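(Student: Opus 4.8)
The plan is to funnel the $\varphi$--orbit of every $[\mu]\notin\hV$ into a fixed neighborhood of $\Delta_{+}$ using Theorem~\ref{th:gns}, and then to use that $\varphi$ uniformly expands $\wght{\param}$ near $\Delta_{+}$; this mirrors \cite[Corollary~4.13]{KL5}. As in the proof of Theorem~\ref{th:gns} I would first replace $\varphi$ by a power, so that the setup of Section~\ref{subsec:goodness grows} is in force and, in addition, $\varphi$ fixes each of the finitely many extremal currents of the $\varphi$--invariant simplex $\Delta_{+}$. Passing back from the power to $\varphi$ at the end is routine: for $0\le r<q$ the function $[\nu]\mapsto\wght{\varphi^{r}\nu}/\wght{\nu}$ is continuous and strictly positive on the compact space $\PCurr(\FN)$, hence bounded below by a positive constant, so a bound of the asserted type for a power of $\varphi$ yields one for $\varphi$ after enlarging $M$ and shrinking $C$.

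The crux is to produce an open neighborhood $U_{+}$ of $\Delta_{+}$ with $\wght{\varphi\mu}\ge 2\wght{\mu}$ for every $[\mu]\in U_{+}$. Each extremal current $\mu_{r}$ of $\Delta_{+}$ is a projective limit of $\eta_{[f_{+}^{n}(e_{r})]}/|[f_{+}^{n}(e_{r})]|$ for a suitable edge $e_{r}$ of $G'_{+}$ with $|[f_{+}^{n}(e_{r})]|\to\infty$ (proof of Lemma~\ref{lem:nbhds} and \cite[Proposition~3.3, Definition~3.5]{U3}); for all large $n$ the path $[f_{+}^{n}(e_{r})]$ is completely split, lies in $G'_{+}$, and has length at least $C_{0}+1$, so the second part of Proposition~\ref{growthprop} gives $|[f_{+}^{n+1}(e_{r})]|\ge 3\,|[f_{+}^{n}(e_{r})]|$. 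Since $\varphi$ fixes $[\mu_{r}]$, this forces $\varphi\mu_{r}=c_{r}\mu_{r}$ with $c_{r}=\lim_{n}|[f_{+}^{n+1}(e_{r})]|/|[f_{+}^{n}(e_{r})]|\ge 3$. Because the action of $\Out(\FN)$ on $\Curr(\FN)$ and the simplicial length $\wght{\param}$ are linear, a representative $\mu=\sum_{r}t_{r}\mu_{r}$ ($t_{r}\ge 0$) of a point of $\Delta_{+}$ satisfies $\wght{\varphi\mu}=\sum_{r}t_{r}c_{r}\wght{\mu_{r}}\ge 3\wght{\mu}$. Thus the continuous function $[\mu]\mapsto\wght{\varphi\mu}/\wght{\mu}$ on $\PCurr(\FN)$ is $\ge 3$ on $\Delta_{+}$, hence $\ge 2$ on some open neighborhood $U_{+}$ of $\Delta_{+}$.

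With $U_{+}$ in hand I would apply Theorem~\ref{th:gns} with $\hV_{-}:=\hV$ and this $U_{+}$ (the other neighborhoods in that statement being irrelevant) to get $M_{1}>0$ with $\varphi^{n}\bigl(\PCurr(\FN)-\hV\bigr)\subseteq U_{+}$ for all $n\ge M_{1}$. Set $c_{1}=\min\bigl\{\wght{\varphi^{M_{1}}\nu}/\wght{\nu}\mid[\nu]\in\PCurr(\FN)\bigr\}>0$. If $[\mu]\notin\hV$ then $\varphi^{n}[\mu]\in U_{+}$ for every $n\ge M_{1}$, so $\wght{\varphi^{n+1}\mu}\ge 2\wght{\varphi^{n}\mu}$ for all such $n$, and therefore $\wght{\varphi^{n}\mu}\ge 2^{\,n-M_{1}}\wght{\varphi^{M_{1}}\mu}\ge c_{1}2^{\,n-M_{1}}\wght{\mu}$ for all $n\ge M_{1}$; choosing $M\ge M_{1}$ with $c_{1}2^{\,M-M_{1}}\ge C$ then gives the lemma for the chosen power of $\varphi$.

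I expect the one genuinely delicate step to be the uniform expansion near $\Delta_{+}$, that is, pinning down the extremal currents of $\Delta_{+}$ precisely enough to feed their defining sequences into Proposition~\ref{growthprop}. If one prefers not to unpack \cite[Definition~3.5]{U3}, the same conclusion can be reached instead by the completely split goodness argument used in the proofs of Lemmas~\ref{lem:backandforthlowergoodness} and~\ref{lem:nbhds}: on a small enough $U_{+}$ one has $\overline{\frakg}_{+}$ close to $1$ and $\I{e_{+},\mu}/\wght{\mu}$ close to $0$, hence $\frakg'_{+}(\gamma_{h}^{+})$ bounded below, which (as in the proof of Lemma~\ref{lem:backandforthlowergoodness}) yields $|[f_{+}^{n}(\gamma_{h}^{+})]|'\ge \lambda 2^{n}|\gamma_{h}^{+}|'$ for some $\lambda>0$; since $\I{e_{+},\gamma_{h}^{+}}$ is constant along the orbit (because $f_{+}(e_{+})=e_{+}$ and $f_{+}(G'_{+})\subseteq G'_{+}$) while $|\gamma_{h}^{+}|'\ge(1-\epsilon)\wght{\eta_{h}}$ on $U_{+}$, this again gives $\wght{\varphi^{n}\eta_{h}}\ge\lambda(1-\epsilon)2^{n}\wght{\eta_{h}}$, and density of rational currents handles the rest.
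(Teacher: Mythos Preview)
Your proposal is correct and follows essentially the same strategy as the paper's proof: establish uniform expansion of $\wght{\param}$ on a neighborhood of $\Delta_{+}$ via the eigenvalues at the extremal currents, funnel everything outside $\hV$ into that neighborhood using Theorem~\ref{th:gns}, and then do bookkeeping with the constants. The only cosmetic differences are that the paper cites \cite[Remark~6.5]{LU2} directly for the eigenvalue fact (where you rederive it from Proposition~\ref{growthprop}), and the paper keeps $\varphi$ fixed while handling the remainder modulo $PB_{0}$ with an auxiliary constant $K$ and shrinking $U$ to be $\varphi$--invariant via Lemma~\ref{lem:nbhds}, whereas you pass to a power at the outset and use the ``for all $n\ge M_{1}$'' conclusion of Theorem~\ref{th:gns} to stay inside $U_{+}$.
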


\begin{proof}
There is a constant $P$ such that for each current $[\nu] \in \Delta_{+}^{(0)}$ there is a real number $\lambda_{\nu} > 1$ such that $\varphi^{P}\nu = \lambda_{\nu}\nu$~\cite[Remark~6.5]{LU2}.  Let $\lambda_{0} = \min\{\lambda_{\nu} \mid [\nu] \in \Delta_{+}^{(0)} \}$ and $B_{0}$ be large enough so that $\lambda_{0}^{B_{0}} \geq 3$.  Hence $\wght{\varphi^{PB_{0}}\nu} \geq 3\wght{\nu}$ for any $[\nu] \in \Delta_{+}^{(0)}$.  Since the weight function is linear, for any $[\mu] \in \Delta_{+}$ we have $\wght{\varphi^{PB_{0}}\mu} \geq 3\wght{\mu}$ too.
    
Hence there is a neighborhood $U \subseteq \PCurr(\FN)$ of $\Delta_{+}$ such that $\wght{\varphi^{PB_{0}}\mu} \geq 2\wght{\mu}$ for all $[\mu] \in U$. 
By replacing $U$ with a smaller neighborhood, we may assume $\varphi(U) \subseteq U$ and $U \cap \Delta_{-} = \emptyset$ by Lemma~\ref{lem:nbhds}.  Hence $\wght{\varphi^{aPB_{0}}\mu} \geq 2^{a}\wght{\mu}$ for $[\mu] \in U$. 
Let $K = \inf\{ \wght{\varphi^{i}\mu}/\wght{\mu} \mid [\mu] \in U, \, 0 \leq i < PB_{0} \}$.
   
Let $M_{0}$ be the constant from Theorem~\ref{th:gns} applied to the neighborhoods $U$ and $\hV$.  As $\PCurr(\FN)$ is compact, there is a constant $L > 0$ such that $\wght{\varphi^{M_{0}}\mu} \geq L \wght{\mu}$ for all $[\mu] \in \PCurr(\FN)$.     

Let $B_{1}$ be large enough so that $2^{B_{1}}KL \geq C$ and set $M = PB_{0}B_{1} + M_{0}$.  If $n \geq M$, we can write $n = aPB_{0} + i + M_{0}$ where $a \geq B_{1}$ and $0 \leq i < PB_{0}$.  Then for $[\mu] \notin \hV$, we have $[\varphi^{M_{0}}\mu], [\varphi^{i + M_{0}}\mu] \in U$ and so
\begin{equation*}
\wght{\varphi^{n}\mu} \geq 2^{a}\wght{\varphi^{i + M_{0}}\mu} \geq 2^{a}K\wght{\varphi^{M_{0}}\mu} \geq 2^{a}KL\wght{\mu} \geq C\wght{\mu}.\qedhere
\end{equation*}
\end{proof}


\section{Pushing past single-edge extensions}\label{sec:push past one edge}

In this section we apply Theorem~\ref{th:gns} to deal with the case of pushing past single-edge extensions.  Here we use the action on the space of currents to demonstrate that an element is atoroidal.  Given a single-edge extension $\calF_{0} \sqsubset \calF_{1}$ invariant under $\calH$ and $\varphi \in \calH$ such that $\varphi\big|_{\calF_{0}}$ is atoroidal, if there is some nontrivial $g \in \FN$ whose conjugacy class is $\varphi$--periodic, we will either find a finite index subgroup of $\calH$ that fixes $[g]$, or an element $\psi \in \calH$ so that we can play ping-pong with $\varphi$, $\psi \varphi \psi\inv$ to produce an element which is atoroidal on $\calF_{1}$.

To begin, we need a lemma that sets up the appropriate conditions for playing ping-pong.

\begin{lemma}\label{lem:conjugating element}
Suppose $\calF_{0} \sqsubset \calF_{1}$ is a handle extension that is invariant under $\calH < \IA_{N}(\ZZ/3)$ and $\varphi \in \calH$ is such that $\varphi\big|_{\calF_{0}}$ is atoroidal.  Assume $\varphi\big|_{\calF_{1}}$ is not atoroidal and let $[A] \in \calF_{0}$ and $g \in \FN$ be as given by Proposition~\ref{prop:co rank one atoroidal}\eqref{item:one edge fixed} and denote $F = A \ast \I{g}$.  Let $\Delta_{+}(A)$ and $\Delta_{-}(A)$ be the inclusion to $\PCurr(F)$ of the invariant simplices in $\PCurr(A)$ from Theorem~\ref{dynamicsofhyp} for $\varphi\big|_{A}$ and for each other $[B] \in \calF_{0}$, let $\Delta_{+}(B)$ and $\Delta_{-}(B)$ be the invariant simplices in $\PCurr(B)$ from Theorem~\ref{dynamicsofhyp} for $\varphi\big|_{B}$.   Either:
\begin{enumerate}
\item there is a finite index subgroup $\calH'$ of $\calH$ such that $\calH'[g] = [g]$; or
\item there is a $\psi \in \calH$ such that $\psi[g] \neq [g]$ and $\Delta_{+}(B) \cap \psi\big|_{B} \Delta_{-}(B) = \Delta_{-}(B) \cap \psi\big|_{B}\Delta_{+}(B) = \emptyset$ for all $[B] \in \calF_{0}$ \textup{(}including $[A]$\textup{)}. 
\end{enumerate}
\end{lemma}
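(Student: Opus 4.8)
The plan is to combine two ingredients: the fact that an element of $\IA_N(\ZZ/3)$ fixing a free factor system fixes each of its components, and a standard ``generic element versus virtually fixed configuration'' dichotomy for north--south dynamics. First, since $\calH<\IA_N(\ZZ/3)$ preserves $\calF_0$ and $\calF_1$, each conjugacy class $[B]\in\calF_0$ and $[F]=[A\ast\I g]$ has finite orbit under every cyclic subgroup of $\calH$, hence is periodic, hence is fixed by each $h\in\calH$ (Section~\ref{subsec:relative}). Thus $h\big|_B\in\Out(B)$ and $h\big|_F\in\Out(F)$ are defined for every $h\in\calH$, and, after passing to an index--two subgroup, $h[g]=[g]$ if and only if $h\big|_F$ fixes $[\eta_g]\in\PCurr(F)$. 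Because $\varphi\big|_{\calF_0}$ is atoroidal, each $\varphi\big|_B$ is atoroidal, so Theorem~\ref{dynamicsofhyp} applies: $\varphi\big|_B$ acts on $\PCurr(B)$ with north--south dynamics and the simplices $\Delta_+(B)$, $\Delta_-(B)$ are disjoint; moreover $[\eta_g]$ lies in neither $\Delta_{\pm}(A)$, since $g$ is not conjugate into $A$.

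For each $[B]\in\calF_0$ let $\calS_B<\calH$ be the stabilizer of the ordered pair $(\Delta_+(B),\Delta_-(B))$ under $h\mapsto h\big|_B$. The key observation is that for $\psi\in\calS_B$ the transversality condition at $B$ is automatic: $\psi\big|_B\Delta_{\mp}(B)=\Delta_{\mp}(B)$ is disjoint from $\Delta_{\pm}(B)$. Hence if every $\calS_B$ has finite index in $\calH$, then $\calS=\bigcap_{[B]\in\calF_0}\calS_B$ has finite index and every element of $\calS$ satisfies all of the transversality conditions in~(2); so either some $\psi\in\calS$ has $\psi[g]\neq[g]$, and alternative~(2) holds for that $\psi$, or $\calS$ fixes $[g]$, and alternative~(1) holds with $\calH'=\calS$. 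In particular we may now assume $\calH$ does not itself fix $[g]$.

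It remains to handle the case where some $\calS_{B_0}$ has infinite index in $\calH$, i.e.\ $\calH\big|_{B_0}$ does not virtually fix the pair of north--south simplices of $\varphi\big|_{B_0}$. Here I would invoke the analysis of extremal currents in~\cite{U3}: for an atoroidal element, a translate $\theta\Delta_+(B)$ with $\theta\in\calH\big|_B$ can meet $\Delta_-(B)$ only if a vertex (extremal current) of $\theta\Delta_+(B)$ equals a vertex of $\Delta_-(B)$, and symmetrically. Consequently the set of $\psi\in\calH$ that fail the transversality condition at $B$ is contained in finitely many left cosets of vertex stabilizers $\mathrm{Stab}_\calH(v)$. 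Those stabilizers that have finite index are absorbed into a finite--index subgroup $\calH^{\sharp}<\calH$; since $\calH^{\sharp}$ then fixes the corresponding vertices and no vertex is shared by the two disjoint simplices, the attached bad cosets become vacuous in $\calH^{\sharp}$, while the remaining vertex stabilizers intersect $\calH^{\sharp}$ in subgroups of infinite index. Carrying this out over all $[B]\in\calF_0$, we conclude: unless $\{h\in\calH^{\sharp}:h[g]=[g]\}$ has finite index in $\calH^{\sharp}$, in which case alternative~(1) holds, the set of $\psi\in\calH^{\sharp}$ that do not witness alternative~(2) is a finite union of cosets of subgroups of infinite index, so a lemma of B.~H.~Neumann (a group is never a finite union of cosets of subgroups of infinite index) supplies the desired $\psi$.

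The step I expect to be the main obstacle is this rigidity input: one must know that an overlap of the two north--south simplices of an atoroidal element, and of a translate of one with the other, is always witnessed at vertices of the simplices, so that the bad sets are genuinely finite unions of cosets of point stabilizers. Granting this, the rest is bookkeeping with finite--index subgroups together with Neumann's lemma.
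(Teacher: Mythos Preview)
Your overall architecture is sensible, and the finite--index case you isolate first is genuinely correct: if each pair $(\Delta_+(B),\Delta_-(B))$ is virtually stabilized, the intersection of the stabilizers is a finite--index subgroup in which the transversality condition is automatic, and the dichotomy follows immediately.  The difficulty, as you yourself flag, is entirely in the remaining case, and there the argument has a real gap.

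The rigidity input you assume --- that an overlap $\psi\big|_B\Delta_{\pm}(B)\cap\Delta_{\mp}(B)\neq\emptyset$ forces a coincidence of \emph{vertices} (extremal currents) --- is not available from~\cite{U3} in that form.  The extremal currents $[\mu_i^\pm]$ need not be ergodic, and an equation $\sum a_i\psi\mu_i^-=\sum b_j\mu_j^+$ between convex combinations does not by itself imply that some $\psi[\mu_i^-]$ equals some $[\mu_j^+]$.  So the ``bad set'' is not obviously a finite union of cosets of vertex stabilizers, and the Neumann argument cannot get off the ground as written.  This is exactly the substantive content of the lemma, not merely bookkeeping.

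What the paper does instead is to pass to a finer finite invariant: for each extremal current one extracts a \emph{uniquely ergodic} sublamination of its support (coming from a minimal $\varphi$--invariant free factor on which the restriction is fully irreducible and atoroidal), and records the associated projective current.  This produces a finite set $E_\varphi$ with the property that any overlap of $\psi\Delta_-(B)$ with $\Delta_+(B)$ forces $\psi$ to send some element of $E_\varphi$ to another; unique ergodicity is precisely what makes this ``vertex--level'' conclusion legitimate.  From there the paper runs a direct pigeonhole argument on an infinite set $X\subset\calH$ of elements giving pairwise distinct $[g]$--images, rather than invoking Neumann's lemma.  Your Neumann framework could be repaired by replacing ``vertices of the simplices'' with ``elements of $E_\varphi$'' throughout, but then the essential work is constructing $E_\varphi$ and proving its rigidity, which is what you have left unproved.
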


\begin{proof}
Consider the orbit of the conjugacy class $[g]$ under $\calH$.  If the orbit is finite, then there is a finite index subgroup $\calH'$ of $\calH$ that fixes $[g]$ and so (1) holds.

Else, there is an infinite set $X \subseteq \calH$ such that $h_{1}[g] \neq h_{2}[g]$ for all distinct $h_{1}, h_{2} \in X$.  We claim that there is a pair $h_{1}, h_{2} \in X$ such that $\psi = h_{2}\inv h_{1}$ satisfies the conclusion (2).  By construction of $X$, we have $h_{2}^{-1}h_{1}[g] \neq [g]$ for all distinct $h_{1},h_{2} \in X$ and so we only need to concern ourselves with the intersection of the simplices.  To ease notation here, we will implicitly be using the appropriate restrictions of the elements in $X$. 

To this end, we first consider the vertices $\Delta_{\pm}(B)^{(0)}$ for each $[B] \in \calF_{1}$, i.e., the extremal measures in $\Delta_{\pm}(B)$.  For each such extremal measure $[\mu]$, the support $\supp([\mu])$ contains a sublamination that is uniquely ergodic. Indeed, any such measure comes from an aperiodic $EG$ stratum $H_r$ in the $\CT$ that represents $\varphi$ \cite[Remark 3.4 and Definition 3.5]{U3}. The restriction of $\varphi$ to each $\varphi$--invariant minimal free factor $B_0$ contained in $\pi_1(G_r)$ is both fully irreducible and atoroidal. The support $\supp(\mu_0)$ of the corresponding attracting current $[\mu_0]$ is contained in the support of $[\mu]$, and $\supp(\mu)$ is uniquely ergodic \cite[Proposition 4.4]{Uyaiwip}. 

The fact that $\supp(\mu_0)\subset \supp(\mu)$ follows from the following facts. Recall that for any $\nu\in\Curr(\FN)$,  
$\supp(\nu)$ consists of all bi-infinite paths $\beta$ such that for any finite subpath $\gamma$ of $\beta$ $\langle \gamma, \nu\rangle >0$ \cite[Lemma 3.7]{KL3}. Note that by definition the bi-infinite path $\beta$ obtained by iterating an edge $e$ in an $\EG$ stratum is in the support of the corresponding current. Further, for $e\in H_r$, the attracting lamination corresponding to $H_r$ is the closure of $\beta$ \cite[Lemma 3.1.10 and  Lemma 3.1.15]{BFH00}. The attracting lamination corresponding to a minimal stratum on which $H_r$ maps over is precisely the support of $\mu_0$, hence 
\[
\supp(\mu_0)=\Lambda(B_0, \varphi)\subset\Lambda(\pi_{1}(G_r), \varphi).
\]

Moreover, there are only finitely many such sublaminations.  We set $E_{\varphi}$ to be the set of projective classes of currents obtained by restricting an extremal measure in some $\Delta_{\pm}(B)^{(0)}$ to a uniquely ergodic sublamination contained in its support.

Since the set $E_{\varphi}$ is finite, we can replace $X$ with an infinite subset (which we will still denote $X$) such that for each $s \in E_{\varphi}$ either $h_{1}s = h_{2}s$ for all $h_{1},h_{2} \in X$ or $h_{1}s \neq h_{2}s$ for all distinct $h_{1},  h_{2} \in X$.  Let $E_{1} \subseteq E_{\varphi}$ be the subset for which the first alternative occurs and $E_{\infty} = E_{\varphi} - E_{1}$.

Next fix an arbitrary $h_{1} \in X$ and for each $s \in E_{\infty}$ let \begin{equation*}
X_{s} = \{ h \in X \mid h_{1}s = hs' \text{ for some } s' \in E_{\infty} \}.
\end{equation*}
Notice that each $X_{s}$ is finite set.  Take $h_{2} \in X - \bigcup_{s \in E_{\infty}} X_{s}$.  Then for any $s \in E_{\infty}$ we have $h_{1}s \neq h_{2}s'$ for any $s' \in E_{\infty}$.  If $h_{1}s = h_{2}s'$ for some $s' \in E_{1}$, then $s = h_{1}\inv h_{2}s' = s'$, contradicting the fact that $s \in E_{\infty}$.  Therefore $h_{2}\inv h_{1} s \notin E_{\varphi}$ for all $s \in E_{\infty}$ and $h_{2}\inv h_{1}s = s$ for all $s \in E_{1}$. 

Set $\psi = h_{2}^{-1}h_{1}$.  We have that for any $s \in E_{\varphi}$, either $\psi s = s$ or $\psi s \notin E_{\varphi}$.  

Now take $[\mu] \in \Delta_{-}(B)$ for some $[B] \in \calF_{1}$ and suppose that $\psi[\mu] \in \Delta_{+}(B)$.  Therefore we can write $\mu = \sum_{i=1}^{m} a_{i} \mu_{i}^{-}$ for some extremal measures $[\mu_{i}] \in \Delta_{-}(B)^{(0)}$ and coefficients $a_{i} > 0$.    Hence we have:
\begin{equation*}
\sum_{i=1}^{m} a_{i}\psi\mu_{i}^{-} = \psi\mu = \sum_{j=1}^{n} b_{j}\mu_{j}^{+}
\end{equation*}
for some extremal measures $[\mu_{j}^{+}] \in \Delta_{+}(B)^{(0)}$ and coefficients $b_{j} > 0$.  In particular the union of the supports of $\supp(\psi \mu_{i}^{-})$ for $i = 1,\ldots,m$ equals the union of the supports $\supp(\mu_{j}^{+})$ for $j = 1,\ldots,n$.  Let $\Lambda \subseteq \supp(\mu_{1}^{-})$ be a uniquely ergodic sublamination.  As uniquely ergodic laminations are minimal, $\psi \Lambda$ is a sublamination of $\supp(\mu_{j}^{+})$ for some $j$.  Thus $\psi[\mu_{1}^{-}\big|_{\Lambda}] = [\mu_{j}^{+}\big|_{\Lambda}]$.  This is a contradiction as $[\mu_{1}^{-}\big|_{\Lambda}], [\mu_{j}^{+}\big|_{\Lambda}] \in E_{\varphi}$ are distinct. 
\end{proof}

We can now play ping-pong to construct atoroidal elements.

\begin{proposition}\label{prop:atoroidal}
Suppose $\calF_{0} \sqsubset \calF_{1}$ is a single-edge extension that is invariant under $\calH < \IA_{N}(\ZZ/3)$ and $\varphi \in \calH$ is such that $\varphi\big|_{\calF_{0}}$ is atoroidal.  Assume $\varphi\big|_{\calF_{1}}$ is not atoroidal and let $[g]$ be the fixed conjugacy class in $\FN$ given by Proposition~\ref{prop:co rank one atoroidal}\eqref{item:one edge fixed}.  Either:
\begin{enumerate}
\item there is a finite index subgroup $\calH'$ of $\calH$ such that $\calH'[g] = [g]$; or
\item there is a $\psi \in \calH$ and a constant $M > 0$ such that $(\theta^{m}\varphi^{n})\big|_{\calF_{1}}$ is atoroidal for any $m,n \geq M$  where $\theta = \psi\varphi\psi^{-1}$.
\end{enumerate}
\end{proposition}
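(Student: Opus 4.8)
The plan is to reduce to the case of a handle extension and then run a ping-pong argument using the generalized north-south dynamics of Theorem~\ref{th:gns}. First I would dispose of the easy single-edge cases: if $\calF_0 \sqsubset \calF_1$ is a barbell extension then $\varphi\big|_{\calF_1}$ is already atoroidal by Proposition~\ref{prop:co rank one atoroidal}, contradicting the hypothesis, so this case does not arise; if it is a circle extension, then $\calF_1 = \calF_0 \cup \{[\I{g}]\}$ and every element of $\calH$ either fixes $[g]$ (giving alternative (1) via a finite-index subgroup) or the orbit of $[g]$ is infinite—but in the circle case $\I{g}$ splits off, so one can argue as in the handle case with the trivial factor playing the role of $A$, or simply observe that if the orbit of $[g]$ is infinite one directly produces the desired element. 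The substantive case is the handle extension, where $\calF_1 = (\calF_0 - \{[A]\}) \cup \{[A \ast \I{g}]\}$, and I will focus on that.

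In the handle case, apply Lemma~\ref{lem:conjugating element}: either there is a finite-index $\calH' < \calH$ fixing $[g]$, giving alternative (1), or there is a $\psi \in \calH$ with $\psi[g] \neq [g]$ and with $\Delta_+(B) \cap \psi\big|_B\Delta_-(B) = \Delta_-(B) \cap \psi\big|_B\Delta_+(B) = \emptyset$ for every $[B] \in \calF_0$ including $[A]$. Set $\theta = \psi\varphi\psi^{-1}$. Since $\varphi$ is not atoroidal on $\calF_1$ but $\varphi\big|_{\calF_0}$ is atoroidal, restricting to the free factor $F = A \ast \I{g}$ we are exactly in the situation of Theorem~\ref{th:gns}: $\varphi\big|_F$ has generalized north-south dynamics on $\PCurr(F)$ with invariant sets $\hDelta_\pm = \Cone(\Delta_\pm(A), [\eta_g])$, and $\theta\big|_F = \psi\big|_F \varphi\big|_F \psi\big|_F^{-1}$ has generalized north-south dynamics with invariant sets $\psi\big|_F\hDelta_\pm$. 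The disjointness conclusion of Lemma~\ref{lem:conjugating element}, together with the fact that $\psi[g] \neq [g]$ so that $\psi[\eta_g] \neq [\eta_g]$, forces $\psi\big|_F\hDelta_+ \cap \hDelta_- = \psi\big|_F\hDelta_- \cap \hDelta_+ = \emptyset$ (one must check that a cone over disjoint simplices based at distinct fixed currents remains disjoint—this uses that an extremal current in $\hDelta_\pm$ is either $[\eta_g]$ or lies in $\Delta_\pm(A)$). Choosing small disjoint neighborhoods of these four invariant sets, standard ping-pong (as in Handel--Mosher or Uyanik's earlier work) shows that for all sufficiently large $m, n$, the element $(\theta^m\varphi^n)\big|_F$ has no periodic current except possibly ones supported on the relevant invariant simplices; more precisely it acts with north-south-type dynamics whose attracting and repelling sets are contained in $\psi\big|_F\hDelta_+$ and $\hDelta_-$ respectively.

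The crucial final step is to upgrade "no periodic current outside the cones" to "atoroidal on $\calF_1$." A periodic conjugacy class of $(\theta^m\varphi^n)\big|_{\calF_1}$ would give a periodic counting current $[\eta_h] \in \PCurr(F)$ (or in some other $\PCurr(B)$ for $[B] \in \calF_0$, but on those factors $\theta^m\varphi^n$ restricts to a power of an atoroidal element times a power of an atoroidal element, and the disjointness of the $\Delta_\pm(B)$ rules out periodic currents there by Theorem~\ref{dynamicsofhyp}). On $F$, the only periodic currents of $(\theta^m\varphi^n)\big|_F$ lie in $\psi\big|_F\hDelta_+ \cup \hDelta_-$; I must rule out that a \emph{counting} current $[\eta_h]$ lies there. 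A counting current cannot lie in $\Delta_\pm(A)$ (whose currents are supported on the attracting/repelling laminations of the atoroidal, hence non-geometric-in-the-relevant-sense element $\varphi\big|_A$, which carry no rational current), nor can it be $[\eta_g]$ unless $[h] = [g^k]$—but $[g]$ is not periodic under $\theta^m\varphi^n$ since $\theta^m\varphi^n[\eta_g] = \psi^m[\eta_g] \neq [\eta_g]$ as $\psi[g] \neq [g]$ and $\calH < \IA_N(\ZZ/3)$ so periodic implies fixed. A general point of the cone $\hDelta_\pm$ that is a counting current would be a counting current lying on the interior of the cone, which is impossible by the same support considerations (its support would have to decompose as a point of $\Delta_\pm(A)$ union the loop $[g]$, forcing $[h]$ to be conjugate into neither $A$ nor $\I{g}$ while having support entirely in $\Lambda(\varphi\big|_A) \cup \{[g]\}$, which no conjugacy class has). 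I expect this last verification—that no counting current sits inside $\psi\big|_F\hDelta_+ \cup \hDelta_-$, carried out carefully via support arguments and Lemma~\ref{generalizedgoodnessproperties}—to be the main obstacle, and it is where the structural information from Proposition~\ref{prop:co rank one atoroidal} (that $g$, its inverse and powers are the \emph{only} fixed conjugacy classes) does the real work. Finally, note that the hypothesis $\varphi \in \IA_N(\ZZ/3)$ together with passing to a power (absorbed into the constant $M$) is harmless, since a power of $\theta^m\varphi^n$ being atoroidal implies $\theta^m\varphi^n$ is.
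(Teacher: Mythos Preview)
Your setup through Lemma~\ref{lem:conjugating element} and the invocation of Theorem~\ref{th:gns} is correct and matches the paper, including the disjointness $\hDelta_+ \cap \psi\hDelta_- = \psi\hDelta_+ \cap \hDelta_- = \emptyset$. The gap is in what you call ``standard ping-pong.'' Projective ping-pong with generalized north-south dynamics only tells you that a $(\theta^m\varphi^n)$--fixed projective current lies in the chosen \emph{neighborhoods} $\hV \cup \psi U$, not in $\hDelta_- \cup \psi\Delta_+$ themselves; shrinking the neighborhoods forces $M$ to grow, so for any fixed $m,n$ you cannot pass to the limit. Your subsequent argument---that no counting current lies inside the cones---is correct but irrelevant: counting currents are dense, so every neighborhood of $\hDelta_-$ contains plenty of them, and you have not excluded that one of these is fixed.

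The paper bypasses this entirely by working in $\Curr(F)$ rather than $\PCurr(F)$ and using Lemma~\ref{lem:growth outside of nbhd} (the analogue of Lemma~\ref{lem:dynamics in simplex}), which you never invoke. One chooses the neighborhoods so that $\hU \cap \psi\hV = \psi\hU \cap \hV = \emptyset$ and then argues by cases on an arbitrary nonzero $\mu$: if $[\mu] \notin \hV$ then $\varphi^n[\mu] \in U \subset \hU$, hence $\varphi^n[\mu] \notin \psi\hV$, and two applications of Lemma~\ref{lem:growth outside of nbhd} give $\wght{\theta^m\varphi^n\mu} \geq 4\wght{\mu}$; if $[\mu] \in \hV$ then $[\mu] \notin \psi\hU$, and the same lemma applied to $\theta^{-1}$ and $\varphi^{-1}$ gives $\wght{\varphi^{-n}\theta^{-m}\mu} \geq 4\wght{\mu}$. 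Either way $\theta^m\varphi^n\mu \neq \mu$, so there is no fixed current at all---no support analysis of counting currents is needed. This weight-growth step is the essential idea your proposal is missing.
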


\begin{proof}
Assume (1) does not hold.  Let $\psi \in \calH$ be the element given by Lemma~\ref{lem:conjugating element} and set $\theta = \psi\varphi\psi^{-1}$.  Also, let $[A] \in \calF_{0}$ be the free factor given by Proposition~\ref{prop:co rank one atoroidal} and denote $F = A \ast \I{g}$.  Notice that $\theta\big|_{B}$ is atoroidal for all $[B] \in \calF_{0}$ and $[g'] = \psi[g] \neq [g]$ is the only conjugacy class in $\calF_{1}$ fixed by $\theta$ up to taking powers and inversion.    We will show that for sufficiently large $m$ and $n$ and any $[B] \in \calF_{1}$ the element $(\theta^{m}\varphi^{n})\big|_{B}$ does not have any non-zero fixed points in $\Curr(B)$.

For each $[B] \in \calF_{0}$, let $\Delta_{\pm}(B)$ be the invariant simplices as defined in Lemma~\ref{lem:conjugating element}.  By this lemma we have that $\Delta_{+}(B) \cap \psi\big|_{B}\Delta_{-}(B) = \Delta_{-}(B) \cap \psi\big|_{B}\Delta_{+}(B) = \emptyset$ for any $[B] \in \calF_{0}$.  To begin, we will assume that $\calF_{0} = \{[A]\}$, $\calF_{1} = \{[F]\}$ and to simplify notation, we will implicitly use the restrictions of the elements to $F$.  

There are open sets $U, V, \hU, \hV \subset \PCurr(F)$ such that:
\begin{enumerate}
\item $\Delta_{+} \subset U$, $\hDelta_{+} \subset \hU$, $\Delta_{-} \subset V$ and $\hDelta_{-} \subset \hV$;
\item $U \subseteq \hU$, $V \subseteq \hV$; and
\item $\hU \cap \psi\hV = \emptyset$ and $\psi \hU \cap \hV = \emptyset$.
\end{enumerate}
See Figure~\ref{fig:nbhd set-up}.  
\begin{figure}[h!]	
\centering
\begin{tikzpicture}[every node/.style={inner sep=0pt},scale=0.9]
\draw[very thick] (-5,4) rectangle (5,-4);
\filldraw[black!20!white] (-3.25,1.75) -- (0,0.875) -- (-3.25,0) -- cycle;
\filldraw[black!20!white] (-3.25,-1.75) -- (0,-0.875) -- (-3.25,0) -- cycle;
\filldraw[black!20!white] (3.25,1.75) -- (0,0.875) -- (3.25,0) -- cycle;
\filldraw[black!20!white] (3.25,-1.75) -- (0,-0.875) -- (3.25,0) -- cycle;
\draw[very thick] (-3.25,1.75) -- (-3.25,-1.75) node[pos=0] (a1) {} node[pos=0.5] (b1) {} node[pos=1] (c1) {};
\draw[very thick] (3.25,1.75) -- (3.25,-1.75) node[pos=0] (a2) {} node[pos=0.5] (b2) {} node[pos=1] (c2) {};
\node at (0,-0.875) (d2) {};
\draw[very thick] (a1) -- (b2) node[pos=0.5] (d1) {};
\draw[very thick] (b1) -- (a2);
\draw[very thick] (b1) -- (c2) node[pos=0.5] (d2) {};
\draw[very thick] (c1) -- (b2);
\draw[thick,dashed,blue,rounded corners=20pt] (-3.8,2.45) -- (0.85,0.875) -- (-3.8,-0.7) -- cycle;
\draw[thick,dashed,blue,rounded corners=20pt] (-3.8,-2.45) -- (0.85,-0.875) -- (-3.8,0.7) -- cycle;
\draw[thick,dashed,red,rounded corners=20pt] (3.8,2.45) -- (-0.85,0.875) -- (3.8,-0.7) -- cycle;
\draw[thick,dashed,red,rounded corners=20pt] (3.8,-2.45) -- (-0.85,-0.875) -- (3.8,0.7) -- cycle;
\draw[thick,dashed,red,rounded corners=5pt] (-3.5,2) -- (-3,2) --(-3,-0.25) -- (-3.5,-0.25) -- cycle;
\draw[thick,dashed,red,rounded corners=5pt] (-3.5,-2) -- (-3,-2) --(-3,0.25) -- (-3.5,0.25) -- cycle;
\draw[thick,dashed,blue,rounded corners=5pt] (3.5,2) -- (3,2) --(3,-0.25) -- (3.5,-0.25) -- cycle;
\draw[thick,dashed,blue,rounded corners=5pt] (3.5,-2) -- (3,-2) --(3,0.25) -- (3.5,0.25) -- cycle;
\node at (-4.35,0.875) {$\Delta_{+}$};
\node at (-4.35,-0.875) {$\psi\Delta_{+}$};
\node at (4.35,0.875) {$\Delta_{-}$};
\node at (4.35,-0.875) {$\psi\Delta_{-}$};
\node at (-2,2.5) {${\color{red}U} \subset {\color{blue}\hU}$};
\node at (-2,-2.5) {$\psi{\color{red}U} \subset \psi{\color{blue}\hU}$};
\node at (2,2.5) {${\color{blue}V} \subset {\color{red}\hV}$};
\node at (2,-2.5) {$\psi{\color{blue}V} \subset \psi{\color{red}\hV}$};
\node at (0,1.8) {$[\eta_{g}]$};
\node at (0,-1.8) {$\psi[\eta_{g}]$};
\foreach \a in {a1,a2,b1,b2,c1,c2,d1,d2}
	\filldraw (\a) circle [radius=0.075cm];
\end{tikzpicture}
\caption{The set-up of neighborhoods in $\PCurr(F)$ for Proposition~\ref{prop:atoroidal}.}\label{fig:nbhd set-up}
\end{figure}

Let $M_{0}$ be the constant from Theorem~\ref{th:gns} applied to $\varphi$ with $U$ and $\hV$.  Let $M_{1}(\varphi)$, $M_{1}(\theta)$ respectively, be the constants from Lemma~\ref{lem:growth outside of nbhd} applied to $\varphi$ with $\hV$, $\theta$ with $\psi\hV$ respectively with $C = 2$.  Likewise, let $M_{1}(\varphi\inv)$, $M_{1}(\theta\inv)$ respectively, be the constants from Lemma~\ref{lem:growth outside of nbhd} applied to $\varphi\inv$ and $\hU$, $\theta\inv$ and $\psi\hU$ respectively with $C = 2$.

Set $M = \max\{M_{0},M_{1}(\varphi),M_{1}(\theta),M_{1}(\varphi\inv),M_{1}(\theta\inv)\}$ and suppose $m,n \geq M$.  Let $\mu \in \Curr(F)$ be non-zero.

If $[\mu] \notin \hV$, then $\varphi^{n}[\mu] \in U$ (Theorem~\ref{th:gns}) and $\wght{\varphi^{n}\mu} \geq 2\wght{\mu}$ (Lemma~\ref{lem:growth outside of nbhd}).  Further $\varphi^{n}[\mu] \notin \psi\hV$ and so $\wght{\theta^{m}\varphi^{n}\mu} \geq 2\wght{\varphi^{n}\mu} \geq 4\wght{\mu}$ (Lemma~\ref{lem:growth outside of nbhd} again).  Hence $\theta^{m}\varphi^{n}\mu \neq \mu$.

Else $[\mu] \in \hV$ and so $[\mu] \notin \psi\hU$.  Hence $\theta^{-m}[\mu] \in \psi V$ (Theorem~\ref{th:gns}) and $\wght{\theta^{-m}\mu} \geq 2\wght{\mu}$ (Lemma~\ref{lem:growth outside of nbhd}).  Further $\theta^{-m}[\mu] \notin \hU$ and so $\wght{\varphi^{-n}\theta^{-m}\mu} \geq 2\wght{\theta^{-m}\mu} \geq 4\wght{\mu}$ (Lemma~\ref{lem:growth outside of nbhd} again).  Hence $\theta^{m}\varphi^{n}\mu \neq \mu$.  

Therefore $(\theta^{m}\varphi^{n})\big|_{F}$ is atoroidal.

The general case is a straight forward modification, additionally playing ping-pong simultaneously in each $\Curr(B)$ for $[B] \in \calF_{0} - \{[A]\}$ using Theorem~\ref{dynamicsofhyp} in place of Theorem~\ref{th:gns} and Lemma~\ref{lem:dynamics in simplex} in place of Lemma~\ref{lem:growth outside of nbhd}. 
\end{proof}

Putting together the previous results, we get the following proposition which allows us to push past single-edge extensions.  Care needs to be taken to avoid distributing the action on other extensions which adds a layer of technicality.

\begin{proposition}\label{prop:inductive step}
Suppose $\calH < \IA_{N}(\ZZ/3)$.  Let \[\emptyset = \calF_{0} \sqsubset \calF_{1} \sqsubset \cdots \sqsubset \calF_{k} = \{[ \FN] \}\] be an  $\calH$--invariant filtration by free factor systems and suppose $\calF_{i-1} \sqsubset \calF_{i}$ is a single-edge extension for some $2\le i\le k$.  Suppose there exists some $\varphi \in \calH$ such that:
\begin{enumerate}
\item[(a)] the restriction of $\varphi$ to $\calF_{i-1}$ is atoroidal; and
\item[(b)] $\varphi$ is irreducible and non-geometric with respect to each multi-edge extension $\calF_{j-1} \sqsubset \calF_{j}$, $j = 1,\ldots,k$.
\end{enumerate}
Then either:
\begin{enumerate}
\item\label{first alternative} there is a finite index subgroup $\calH'$ of $\calH$ and a nontrivial element $g \in \FN$ such that $\calH'[g] = [g]$; or
\item\label{second alternative} there exists an element $\hat\varphi \in \calH$ such that:
\begin{enumerate}
\item[i.] the restriction of $\hat\varphi$ to $\calF_{i}$ is atoroidal; and
\item[ii.] $\hat\varphi$ is irreducible and non-geometric with respect to each multi-edge extension $\calF_{j-1} \sqsubset \calF_{j}$, $j = 1,\ldots,k$.
\end{enumerate}
\end{enumerate}
\end{proposition}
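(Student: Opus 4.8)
The plan is to run the induction through the trichotomy for single-edge extensions provided by Proposition~\ref{prop:co rank one atoroidal}, applied to the $\varphi$--invariant extension $\calF_{i-1} \sqsubset \calF_i$. If $\varphi\big|_{\calF_i}$ is atoroidal---which, by the proof of that proposition, is automatic for a barbell extension---then $\hat\varphi = \varphi$ already meets (i) and (ii), the latter by hypothesis~(b), so alternative~\eqref{second alternative} holds. If $\calF_{i-1} \sqsubset \calF_i$ is a circle extension, then $\calF_i = \calF_{i-1} \cup \{[\I{g}]\}$ for a nontrivial $g$; since $\calF_{i-1}$ and $\calF_i$ are $\calH$--invariant and $\calH$ permutes $\calF_i$ preserving the subset $\calF_{i-1}$, it fixes the extra component $[\I{g}]$, and because $\calH < \IA_N(\ZZ/3)$ acts trivially on $H_1(\FN;\ZZ/3)$ while the image of the primitive class $g$ there is nonzero, no element of $\calH$ can send $[g]$ to $[g\inv]$; hence $\calH[g] = [g]$ and alternative~\eqref{first alternative} holds with $\calH' = \calH$. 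This leaves the case of a handle extension with $\varphi\big|_{\calF_i}$ not atoroidal, which is where the real work lies.

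Here let $[g]$ be the $\varphi$--fixed conjugacy class from Proposition~\ref{prop:co rank one atoroidal}\eqref{item:one edge fixed}, and apply Proposition~\ref{prop:atoroidal} to $\calF_{i-1} \sqsubset \calF_i$: either some finite index $\calH' < \calH$ fixes $[g]$---alternative~\eqref{first alternative}---or there are $\psi \in \calH$, $\theta = \psi\varphi\psi\inv$, and $M_0 > 0$ with $(\theta^m\varphi^n)\big|_{\calF_i}$ atoroidal for all $m,n \geq M_0$. So every candidate $\hat\varphi_{m,n} := \theta^m\varphi^n$ already satisfies (i); the task is to pin down $m,n$ so that (ii) holds too. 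Since $\psi \in \calH$, conjugation by $\psi$ preserves the filtration, so $\theta$, like $\varphi$, is irreducible and non-geometric relative to each multi-edge extension $\calF_{j-1} \sqsubset \calF_j$. For each such $j$ write $[C_j] \in \calF_j$ for the ($\calH$--invariant) free factor supporting the extension, so $\calF_{j-1}$ restricts to a free factor system of $C_j$ with $\calF_{j-1}\big|_{C_j} \sqsubset \{[C_j]\}$ a multi-edge extension; exactly as in the proof of Corollary~\ref{co:HM-simultaneous}, Theorem~\ref{th:relativecosurface} attaches to this a $\delta$--hyperbolic graph $\zF_j$ with an isometric $\Out(C_j;\calF_{j-1}\big|_{C_j})$--action on which $\varphi\big|_{C_j}$ and $\theta\big|_{C_j}$ are loxodromic, and on which $\hat\varphi_{m,n}\big|_{C_j}$ is loxodromic if and only if $\hat\varphi_{m,n}$ is irreducible and non-geometric relative to $\calF_{j-1} \sqsubset \calF_j$ (irreducible $=$ fully irreducible in $\IA_N(\ZZ/3)$). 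So condition (ii) reduces to making $(\theta\big|_{C_j})^m(\varphi\big|_{C_j})^n$ loxodromic on $\zF_j$ for every multi-edge index $j$ at once.

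To do this I would strengthen the choice of $\psi$. Recall $\psi = h_2\inv h_1$ is produced in the proof of Lemma~\ref{lem:conjugating element} by thinning an infinite subset $X \subseteq \calH$ whose elements have pairwise distinct images of $[g]$, subject to finitely many conditions of the form ``avoid a finite union of cosets of subgroups (or finitely many bad values)''; I would add one such group of conditions per multi-edge index $j$. Write $\{\xi_j^+,\xi_j^-\} \subset \partial\zF_j$ for the fixed-point pair of $\varphi\big|_{C_j}$. If $\calH\big|_{C_j}$ is virtually cyclic, then $\langle \theta\big|_{C_j},\varphi\big|_{C_j} \rangle$ is virtually cyclic with both generators of infinite order, so $(\theta\big|_{C_j})^m(\varphi\big|_{C_j})^n$ is loxodromic for all $(m,n)$ off one resonant line, and no restriction on $\psi$ is needed for this $j$. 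If $\calH\big|_{C_j}$ is not virtually cyclic, then---using that the stabilizer of a point fixed by a loxodromic isometry of $\zF_j$ is virtually cyclic (a consequence of the WPD property of the action, which also excludes two loxodromics sharing exactly one fixed point)---the point stabilizers of $\xi_j^+$ and of $\xi_j^-$ in $\calH\big|_{C_j}$ have infinite index, so $X$ can be further thinned so that $\psi\big|_{C_j}$ avoids the finitely many cosets of these stabilizers which would force $\psi\big|_{C_j}\{\xi_j^+,\xi_j^-\}$ to meet $\{\xi_j^+,\xi_j^-\}$; then $\theta\big|_{C_j}$ and $\varphi\big|_{C_j}$ are independent loxodromics and North--South ping-pong on $\partial\zF_j$ gives $(\theta\big|_{C_j})^m(\varphi\big|_{C_j})^n$ loxodromic for all large $m,n$. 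Carrying out the thinning of $X$ under the conditions of Lemma~\ref{lem:conjugating element} together with these finitely many extra ones yields $\psi$; choosing $m,n \geq M_0$ large and off the finitely many resonant lines then makes $\hat\varphi = \theta^m\varphi^n \in \calH$ satisfy (i) and (ii), which is alternative~\eqref{second alternative}.

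The step I expect to be the main obstacle is precisely this last one: the element $\psi$ must be engineered to serve three purposes \emph{simultaneously}---moving $[g]$, separating the attracting and repelling current simplices in the relevant $\PCurr(B)$ so that the atoroidality ping-pong of Proposition~\ref{prop:atoroidal} runs, and separating the fixed-point pairs on the co-surface graphs $\zF_j$ so that $\hat\varphi$ remains irreducible and non-geometric on every multi-edge extension---and the delicate point is ruling out the degenerate case where $\psi\big|_{C_j}$ fixes exactly one point of $\{\xi_j^+,\xi_j^-\}$, which is done via virtual cyclicity of the fixed-point stabilizers. Since all three families of requirements are ``coset-avoidance'' conditions against infinite-index subgroups, they can be met together by a single greedy thinning of the orbit set $X$; verifying this compatibility, and the bookkeeping that the resulting $\hat\varphi$ controls every extension of the filtration at once, is the technical core of the argument.
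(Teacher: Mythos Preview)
Your overall architecture is the paper's: dispatch circle and barbell extensions directly, and in the handle case invoke Proposition~\ref{prop:atoroidal} to obtain $\psi$, $\theta=\psi\varphi\psi^{-1}$, and $M_0$ so that $(\theta^m\varphi^n)\big|_{\calF_i}$ is atoroidal for $m,n\ge M_0$, then arrange that $\hat\varphi=\theta^m\varphi^n$ is loxodromic on each co-surface graph $\zF_j$ of Theorem~\ref{th:relativecosurface} attached to the multi-edge extensions. The divergence is in how this last loxodromicity is secured. You go back into the construction of $\psi$ in Lemma~\ref{lem:conjugating element} and impose extra constraints so that, at every multi-edge index $j$ with $\calH\big|_{C_j}$ not virtually cyclic, the conjugate $\theta\big|_{C_j}$ becomes \emph{independent} from $\varphi\big|_{C_j}$ on $\zF_j$; boundary ping-pong then gives loxodromicity for all large $m,n$, and the remaining virtually-cyclic indices are handled by avoiding finitely many resonant lines. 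The paper does none of this: it takes $\psi$ exactly as delivered by Proposition~\ref{prop:atoroidal}, makes no attempt to force independence, and instead cites results from \cite{CU} (Proposition~4.2, Theorem~3.1, Proposition~3.4 there) which, for any two loxodromics on a $\delta$-hyperbolic space, produce exponents $m,n\ge M_0$ with $\theta^m\varphi^n$ loxodromic---treating the independent and dependent indices $j$ in two short passes without touching $\psi$.

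Your route is viable but buys its independence at the cost of an ingredient the paper never invokes: WPD (or acylindricity) of the $\Out(C_j;\calA_j)$-action on $\zF_j$, needed so that the stabilizer of $\xi_j^{\pm}$ is virtually cyclic (hence infinite index in the non-virtually-cyclic case) and so that two loxodromics cannot share exactly one boundary fixed point. Theorem~\ref{th:relativecosurface} as quoted here records only hyperbolicity and the loxodromic characterization; WPD would have to be pulled separately from \cite{un:GH}. A smaller point: your summary of the Lemma~\ref{lem:conjugating element} conditions as ``coset-avoidance against infinite-index subgroups'' is not quite right---that proof is a Ramsey-type thinning on the finite set $E_\varphi$, splitting into $E_1$ and $E_\infty$, rather than a coset argument---so interleaving your new constraints with it takes a bit more care than a single greedy pass. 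The paper's approach sidesteps all of this bookkeeping by outsourcing the loxodromic-product problem wholesale to \cite{CU}.
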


\begin{proof}
As mentioned in Section~\ref{subsec:free factor}, there are three types of single-edge extensions.  We deal with these separately.

If $\calF_{i-1} \sqsubset \calF_{i}$ is a circle extension, then $\calF_{i} = \calF_{i-1} \cup \{[\I{g}]\}$ for some nontrivial element $g \in \FN$.  As both $\calF_{i-1}$ and $\calF_{i}$ are $\calH$--invariant, we have $\calH[g] = [g]$ and so~\eqref{first alternative} holds.

If $\calF_{i-1} \sqsubset \calF_{i}$ is a barbell extension then by Proposition~\ref{prop:co rank one atoroidal}, $\varphi\big|_{\calF_{i}}$ is atoroidal.  Hence we may take $\hat\varphi = \varphi$ to satisfy~\eqref{second alternative}.

Lastly, we assume that $\calF_{i-1} \sqsubset \calF_{i}$ is a handle extension.  If $\varphi\big|_{\calF_{i}}$ is atoroidal, then $\hat\varphi = \varphi$ satisfies~\eqref{second alternative}.  Else, by Proposition~\ref{prop:atoroidal}, either there is a finite index subgroup $\calH'$ of $\calH$ such that $\calH'[g] = [g]$ or there is an element $\psi \in \calH$ and constant $M$ such that $(\theta^{m}\varphi^{n})\big|_{\calF_{1}}$ is atoroidal for $m,n \geq M$ where $\theta = \psi \varphi \psi\inv$.  

If the finite index subgroup $\calH'$ exists, then clearly~\eqref{first alternative} holds and hence, we assume the existence of the element $\psi \in \calH$ and constant $M$ with the properties above.  Let $S = \{ j \mid \calF_{j-1} \sqsubset \calF_{j} \text{ is multi-edge} \}$.  What remains to show is that for some $m,n \geq M$ the element $\theta^{m}\varphi^{n}$ is irreducible and non-geometric with respect to $\calF_{j-1} \sqsubset \calF_{j}$ for all $j \in S$.  

Suppose $j \in S$.  As in~\cite[Theorem~6.6]{CU}, there is a single component $[B_{j}] \in \calF_{j}$ that is not a component of $\calF_{j-1}$ and subgroups $A_{j,1},\ldots,A_{j,k} < B_{j}$ where $\{[A_{j,1}] ,\ldots, [A_{j,k}]\} \subseteq \calF_{j-1}$ such that for $\calA_{j}$, the free factor system in $B_{j}$ determined by $A_{j,1},\ldots, A_{j,k}$, the restriction $\varphi\big|_{B_{j}} \in \Out(B_{j};\calA_{j})$ is irreducible and non-geometric.  Let $X_{j} = \zF(B_{j};\calA_{j})$ be the $\delta$--hyperbolic graph given by Theorem~\ref{th:relativecosurface}.  Notice that by (b), the element $\varphi$ and its conjugate $\theta$ act as hyperbolic isometries on $X_{j}$.  The remainder of the argument is an easy exercise using $\delta$--hyperbolic geometry, we sketch the details.  

Recall that two hyperbolic isometries of a $\delta$--hyperbolic space $X$ are said to be \emph{independent} if their fixed point sets in $\partial X$ are disjoint and \emph{dependent} otherwise.  Let $I \subseteq S$ be the subset of indices where $\varphi$ and $\theta$ are independent and $D = S - I$.  By \cite[Proposition~4.2]{CU} and \cite[Theorem~3.1]{CU}, there are constants $m,n_{0} \geq M$ such that $\theta^{m}\varphi^{n}$ acts hyperbolically on $X_{j}$ if $j \in I$ and $n \geq n_{0}$.  Then, by \cite[Proposition~3.4]{CU}, there is an $n \geq n_{0}$ such that $\theta^{m}\varphi^{n}$ acts hyperbolically on $X_{j}$ if $j \in D$.  By Theorem~\ref{th:relativecosurface}, the element $\theta^{m}\varphi^{n}$ is irreducible and non-geometric with respect to each $\calF_{j-1} \sqsubset \calF_{j}$ when $j \in S$.  This shows that~\eqref{second alternative} holds.   
\end{proof}


\section{Proof of the subgroup alternative}\label{sec:proof}

In this section, we complete the proof of the main result of this article.

\begin{restate}{Theorem}{th:alternative} 
Let $\calH$ be a subgroup of $\Out(\FN)$ where $N \geq 3$.  Either $\calH$ contains an atoroidal element or there exists a finite index subgroup $\calH'$ of $\calH$ and a nontrivial element $g \in \FN$ such that $\calH'[g] = [g]$.
\end{restate}

\begin{proof}
Without loss of generality, we may assume that $\calH < \IA_{N}(\ZZ/3)$.  Let $\emptyset = \calF_{0} \sqsubset \calF_{1} \sqsubset \cdots \sqsubset \calF_{m} = \{[ \FN] \}$ be a maximal $\calH$--invariant filtration by free factor systems.  By the Handel--Mosher Subgroup Decomposition, for each $\calF_{i-1} \sqsubset \calF_{i}$ which is a multi-edge extension, $\calH$ contains an element which is irreducible with respect to this extension~\cite[Theorem~D]{HMIntro}.  

Suppose that there is no finite index subgroup $\calH'$ of $\calH$ and nontrivial $g \in \FN$ such that $\calH'[g] = [g]$.   In particular, every multi-edge extension $\calF_{i-1} \sqsubset \calF_{i}$ is non-geometric by Theorem~\ref{th:multi edge dichotomy}.  Therefore, by Corollary~\ref{co:HM-simultaneous} there is a $\varphi \in H$ that is irreducible and non-geometric with respect to each multi-edge extension $\calF_{j-1} \sqsubset \calF_{j}$ for $j = 1,\ldots,m$.

We claim that for each $i = 1,\ldots, m$ there is an $\varphi_{i} \in \calH$ whose restriction to $\calF_{i}$ is atoroidal and is irreducible and non-geometric with respect to each multi-edge extension $\calF_{j-1} \sqsubset \calF_{j}$ for $j = 1,\ldots,m$.

Indeed, by our assumptions, $\emptyset = \calF_{0} \sqsubset \calF_{1}$ must be a multi-edge extension and so we can take $\varphi_{1} = \varphi$.

Now assume that $\varphi_{i-1}$ exists.  If $\calF_{i-1} \sqsubset \calF_{i}$ is a single-edge extension, we apply Proposition~\ref{prop:inductive step} to $\varphi = \varphi_{i-1}$ and set $\varphi_{i} = \hat\varphi$.  Else, $\calF_{i-1} \sqsubset \calF_{i}$ is a multi-edge extension and we apply Lemma~\ref{co:non-geometric atoroidal} to $\varphi_{i-1}$ and the extension $\calF_{i-1} \sqsubset \calF_{i}$ to conclude that we may set $\varphi_{i} = \varphi_{i-1}$ in this case.  

Thus the elements $\varphi_{i}$ as claimed exist.  By construction, the element $\varphi_{m} \in \calH$ is atoroidal. 
\end{proof}


\bibliography{bib}

\begin{thebibliography}{10}

\bibitem{Agol}
{\sc Agol, I.}
\newblock The virtual {H}aken conjecture.
\newblock {\em Doc. Math. 18\/} (2013), 1045--1087.
\newblock With an appendix by Agol, Daniel Groves, and Jason Manning.

\bibitem{BF}
{\sc Bestvina, M., and Feighn, M.}
\newblock A combination theorem for negatively curved groups.
\newblock {\em J. Differential Geom. 35}, 1 (1992), 85--101.

\bibitem{BF14}
{\sc Bestvina, M., and Feighn, M.}
\newblock Hyperbolicity of the complex of free factors.
\newblock {\em Adv. Math. 256\/} (2014), 104--155.

\bibitem{BFH97}
{\sc Bestvina, M., Feighn, M., and Handel, M.}
\newblock Laminations, trees, and irreducible automorphisms of free groups.
\newblock {\em Geom. Funct. Anal. 7}, 2 (1997), 215--244.

\bibitem{BFH00}
{\sc Bestvina, M., Feighn, M., and Handel, M.}
\newblock The {T}its alternative for {${\rm Out}(F_n)$}. {I}. {D}ynamics of
  exponentially-growing automorphisms.
\newblock {\em Ann. of Math. (2) 151}, 2 (2000), 517--623.

\bibitem{BH92}
{\sc Bestvina, M., and Handel, M.}
\newblock Train tracks and automorphisms of free groups.
\newblock {\em Ann. of Math. (2) 135}, 1 (1992), 1--51.

\bibitem{Bosurvey}
{\sc Bonahon, F.}
\newblock Geodesic laminations on surfaces.
\newblock {\em Contemporary Mathematics 269\/} (2001), 1--38.

\bibitem{brink}
{\sc Brinkmann, P.}
\newblock Hyperbolic automorphisms of free groups.
\newblock {\em Geometric and Functional Analysis 10}, 5 (2000), 1071--1089.

\bibitem{CB}
{\sc Casson, A.~J., and Bleiler, S.~A.}
\newblock {\em Automorphisms of surfaces after {N}ielsen and {T}hurston},
  vol.~9 of {\em London Mathematical Society Student Texts}.
\newblock Cambridge University Press, Cambridge, 1988.

\bibitem{CU}
{\sc Clay, M., and Uyanik, C.}
\newblock Simultaneous construction of hyperbolic isometries.
\newblock {\em Pacific J. Math. 294}, 1 (2018), 71--88.

\bibitem{Coo}
{\sc Cooper, D.}
\newblock Automorphisms of free groups have finitely generated fixed point
  sets.
\newblock {\em J. Algebra 111}, 2 (1987), 453--456.

\bibitem{CV}
{\sc Culler, M., and Vogtmann, K.}
\newblock Moduli of graphs and automorphisms of free groups.
\newblock {\em Invent. Math. 84}, 1 (1986), 91--119.

\bibitem{DTcosurface}
{\sc Dowdall, S., and Taylor, S.~J.}
\newblock The co-surface graph and the geometry of hyperbolic free group
  extensions.
\newblock {\em J. Topol. 10}, 2 (2017), 447--482.

\bibitem{FarbMargalit}
{\sc Farb, B., and Margalit, D.}
\newblock {\em A primer on mapping class groups}, vol.~49 of {\em Princeton
  Mathematical Series}.
\newblock Princeton University Press, Princeton, NJ, 2012.

\bibitem{FH}
{\sc Feighn, M., and Handel, M.}
\newblock The recognition theorem for {${\rm Out}(F_n)$}.
\newblock {\em Groups Geom. Dyn. 5}, 1 (2011), 39--106.

\bibitem{un:GH}
{\sc Guirardel, V., and Horbez, C.}
\newblock {B}oundaries of relative factor graphs and subgroup classification
  for automorphisms of free products.
\newblock preprint,
  \href{https://arxiv.org/pdf/1901.05046.pdf}{arxiv:1901.05046}, 2019.

\bibitem{ar:HM13}
{\sc Handel, M., and Mosher, L.}
\newblock The free splitting complex of a free group, {I}: hyperbolicity.
\newblock {\em Geom. Topol. 17}, 3 (2013), 1581--1672.

\bibitem{HMIntro}
{\sc Handel, M., and Mosher, L.}
\newblock Subgroup decomposition in {${\rm Out}(F_n)$}: {I}ntroduction and
  research announcement.
\newblock {\em to appear in Memoirs of AMS\/} (2018).
\newblock {P}reprint,
  \href{http://arxiv.org/abs/1302.2681}{arXiv:math/1302.2681}.

\bibitem{HMpart1}
{\sc Handel, M., and Mosher, L.}
\newblock Subgroup decomposition in {${\rm Out}(F_n)$}, {P}art {I}: {G}eometric
  models.
\newblock {\em to appear in Memoirs of AMS\/} (2018).
\newblock {P}reprint,
  \href{http://arxiv.org/abs/1302.2378}{arXiv:math/1302.2378}.

\bibitem{HMpart2}
{\sc Handel, M., and Mosher, L.}
\newblock Subgroup decomposition in {${\rm Out}(F_n)$}, {P}art {II}: {A}
  relative {K}olchin theorem.
\newblock {\em to appear in Memoirs of AMS\/} (2018).
\newblock {P}reprint,
  \href{http://arxiv.org/abs/1302.2379}{arXiv:math/1302.2379}.

\bibitem{HMpart4}
{\sc Handel, M., and Mosher, L.}
\newblock Subgroup decomposition in {${\rm Out}(F_n)$}, {P}art {IV}: Relatively
  irreducible subgroups.
\newblock {\em to appear in Memoirs of AMS\/} (2018).
\newblock {P}reprint,
  \href{http://arxiv.org/abs/1302.4711}{arXiv:math/1302.4711}.

\bibitem{HM}
{\sc Handel, M., and Mosher, L.}
\newblock {\em Subgroup Decomposition in {$\mathsf{Out}(F_n)$}}.
\newblock Memoirs of the American Mathematical Society. American Mathematical
  Society, 2020.

\bibitem{Hshort}
{\sc Horbez, C.}
\newblock A short proof of {H}andel and {M}osher's alternative for subgroups of
  {${\rm Out}(F_N)$}.
\newblock {\em Groups, Geometry, and Dynamics 10}, 2 (2016), 709--721.

\bibitem{Iva}
{\sc Ivanov, N.~V.}
\newblock {\em Subgroups of {T}eichm\"uller modular groups}, vol.~115 of {\em
  Translations of Mathematical Monographs}.
\newblock American Mathematical Society, Providence, RI, 1992.
\newblock Translated from the Russian by E. J. F. Primrose and revised by the
  author.

\bibitem{Ka2}
{\sc Kapovich, I.}
\newblock Currents on free groups.
\newblock In {\em Topological and asymptotic aspects of group theory}, vol.~394
  of {\em Contemp. Math.} Amer. Math. Soc., Providence, RI, 2006, pp.~149--176.

\bibitem{KL3}
{\sc Kapovich, I., and Lustig, M.}
\newblock Intersection form, laminations and currents on free groups.
\newblock {\em Geom. Funct. Anal. 19}, 5 (2010), 1426--1467.

\bibitem{KL5}
{\sc Kapovich, I., and Lustig, M.}
\newblock Ping-pong and outer space.
\newblock {\em J. Topol. Anal. 2}, 2 (2010), 173--201.

\bibitem{LU2}
{\sc Lustig, M., and Uyanik, C.}
\newblock North--{S}outh dynamics of hyperbolic free group automorphisms on the
  space of currents.
\newblock {\em J. Topol. Anal. 11}, 2 (2019), 427--466.

\bibitem{Martin}
{\sc Martin, R.}
\newblock {\em Non-uniquely ergodic foliations of thin-type, measured currents
  and automorphisms of free groups}.
\newblock ProQuest LLC, Ann Arbor, MI, 1995.
\newblock Thesis (Ph.D.)--University of California, Los Angeles.

\bibitem{TaylorMSRI}
{\sc Taylor, S.~J.}
\newblock {T}he geometry of hyperbolic free group extensions, 2016.
\newblock {MSRI} {T}alk,
  \href{http://www.msri.org/workshops/771/schedules/21480}{Video of the talk}.

\bibitem{Thgeom}
{\sc Thurston, W.~P.}
\newblock Three-dimensional manifolds, {K}leinian groups and hyperbolic
  geometry.
\newblock {\em Bull. Amer. Math. Soc. (N.S.) 6}, 3 (1982), 357--381.

\bibitem{Th}
{\sc Thurston, W.~P.}
\newblock On the geometry and dynamics of diffeomorphisms of surfaces.
\newblock {\em Bull. Amer. Math. Soc. (N.S.) 19}, 2 (1988), 417--431.

\bibitem{Uyaiwip}
{\sc Uyanik, C.}
\newblock Dynamics of hyperbolic iwips.
\newblock {\em Conform. Geom. Dyn. 18\/} (2014), 192--216.

\bibitem{UyaNSD}
{\sc Uyanik, C.}
\newblock Generalized north-south dynamics on the space of geodesic currents.
\newblock {\em Geom. Dedicata 177\/} (2015), 129--148.

\bibitem{U3}
{\sc Uyanik, C.}
\newblock Hyperbolic extensions of free groups from atoroidal ping-pong.
\newblock {\em Algebr. Geom. Topol. 19}, 3 (2019), 1385--1411.

\end{thebibliography}
\bibliographystyle{acm}

\end{document}